\tikzset{
    labl/.style={near start, rotate=-45}
}
\tikzset{
    state/.style={
           rectangle,
           inner sep=2pt,
           text centered,
           },
}
\newcommand{\horrule}[1]{\rule{\linewidth}{#1}} 
\title{Fractionally Calabi--Yau lattices that tilt to higher Auslander algebras of type A}
\author{Tal GOTTESMAN} 
\date{\normalsize\today} 
\theoremstyle{plain}
\newtheorem{prop}{Proposition}[subsection]
\newtheorem{lem}[prop]{Lemma}
\newtheorem{thm}[prop]{Theorem}
\newtheorem*{thm*}{Theorem}
\newtheorem*{prop*}{Proposition}
\newtheorem{cor}[prop]{Corollary}
\newtheorem{thmIntro}{Theorem}
\newtheorem{corIntro}[thmIntro]{Corollary}
\newtheorem{quest}{Question}
\theoremstyle{definition}
\newtheorem{Def}[prop]{Definition}
\newtheorem{ex}[prop]{Example}
\newtheorem{rem}[prop]{Remark}
\newtheorem{nota}[prop]{Notation}
 \DeclareMathOperator{\Ima}{Im}
\DeclareMathOperator{\HH}{H}
\DeclareMathOperator{\Hom}{Hom}
\DeclareMathOperator{\End}{End}
\DeclareMathOperator{\HomotopyCat}{Ho}
\DeclareMathOperator{\Thicc}{Thick}
\DeclareMathOperator{\Serre}{\mathbb{S}}
\newcommand{\PRes}{\mathcal{P}}
\newcommand{\field}{\mathbbm{k}}
\newcommand{\IncAlg}{\mathcal{A}}
\definecolor{darkblue}{rgb}{0.0,0,0.7} 
\definecolor{darkred}{rgb}{0.7,0,0} 
\newcommand{\darkred}{\color{darkred}} 
\newcommand{\defn}[1]{\emph{\darkred #1}} 
\newif\ifBelowAlpha
\begin{document}

\maketitle 
\begin{abstract}
We prove that the bounded derived category of the lattice of order ideals of the product of two ordered chains is fractionally Calabi--Yau. We also show that these lattices are derived equivalent to higher Auslander algebras of type A. The proofs involve the study of intervals of the poset that have resolutions described with antichains having rigid properties. These two results combined corroborate a conjecture by Chapoton linking posets to Fukaya--Seidel Categories.
\end{abstract}

\tableofcontents

\label{sec:notations}
\section{Introduction}
The notion of fractionally Calabi--Yau categories was introduced by Kontsevich in the late nineties \cite[Definition ~28]{Kontsevich98}. A triangulated category $\mathcal{T}$ with a Serre functor $\Serre$ is said to be \defn{fractionally Calabi--Yau} if there exists $l$ and $d$ such that $\Serre^{l}$ is isomorphic as a functor to the suspension functor applied $d$ times. We say that $\mathcal{T}$ is $\frac{d}{l}$-Calabi--Yau.
When $\mathcal{T} = D^b(A)$, the bounded derived category of an algebra $A$ of finite global dimension over a field $\field$, we can take $\Serre = -\otimes_A^{\mathbbm{L}} DA$, the derived Nakayama functor. In that case, the Calabi--Yau property can be further relaxed. If $\phi$ is an automorphism of $A$, then $D^b(A)$ is said to be twisted fractionally Calabi--Yau if $\Serre^{l}\simeq [d]\circ \phi^{*}$, where $\phi^*$ twists the action of $A$ on a module by $\phi$. We recover the previous definition when $\phi = id_A$. The following theorem makes it easier to detect twisted fractionally Calabi--Yau algebras.

\begin{thmIntro}[{\cite[Proposition ~4.3]{Herschend_2011}}]
Let $\Lambda$ be a finite dimensional $\field$-algebra of finite global dimension. The following conditions are equivalent.
\begin{enumerate}[label=(\roman*)]
\item $\Lambda$ is twisted $\frac{d}{l}$-Calabi--Yau.
\item $\Serre^{l}\Lambda\simeq \Lambda [d]$.
\end{enumerate}
\end{thmIntro}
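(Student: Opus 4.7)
The plan is to prove (i)$\Rightarrow$(ii) by direct evaluation at the regular module, and (ii)$\Rightarrow$(i) by extracting the twisting automorphism and then appealing to derived Morita theory. For (i)$\Rightarrow$(ii), I evaluate the natural isomorphism $\Serre^l \simeq [d]\circ \phi^*$ at $\Lambda$: the twisted module $\phi^*\Lambda$ is isomorphic to $\Lambda$ as a right $\Lambda$-module via the map $x\mapsto \phi(x)$ (it is right $\Lambda$-linear because $\phi(xa)=\phi(x)\phi(a)=\phi(x)\cdot_\phi a$), so $\Serre^l\Lambda \simeq \Lambda[d]$ at once.

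For (ii)$\Rightarrow$(i), I set $F := [-d]\circ \Serre^l$, which is a triangulated autoequivalence of $D^b(\Lambda)$ because $\Serre$ is one; hypothesis (ii) reads $F(\Lambda)\simeq \Lambda$. Fixing an isomorphism $\alpha:\Lambda\xrightarrow{\sim}F(\Lambda)$, I extract a ring automorphism $\phi$ of $\End_{D^b(\Lambda)}(\Lambda)=\Lambda$ by $\phi(g):=\alpha^{-1}F(g)\alpha$. The map $\phi$ is well defined up to inner automorphism (another choice of $\alpha$ conjugates $\phi$, which produces only an additional natural isomorphism between the resulting twist functors). By construction, $F$ and $\phi^*$ agree on the object $\Lambda$ and on the action of $\End(\Lambda)$.

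The final step is to upgrade this pointwise agreement to a natural isomorphism $F\simeq\phi^*$ of triangulated functors. The clean route is derived Morita theory (Rickard, refined via Keller's dg-enhancements): any triangulated autoequivalence of $D^b(\Lambda)$ is naturally isomorphic to the derived tensor product with an invertible $\Lambda$-bimodule complex $X$. Since $X\simeq F(\Lambda)\simeq \Lambda$ as a right $\Lambda$-module, $X$ is concentrated in degree zero and its left action factors through $\phi$, so $X\simeq {}_\phi\Lambda$ and $F\simeq \phi^*$. Unwinding yields $\Serre^l\simeq [d]\circ\phi^*$, which is (i). The main obstacle is exactly this last upgrade: an elementary route would exploit that $\Lambda$ thickly generates $D^b(\Lambda)$ (by finite global dimension) and propagate the isomorphism via compatibility with cones and shifts, but controlling naturality across the thick closure is technically delicate, which is why the bimodule formulation is needed.
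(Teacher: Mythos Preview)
The paper does not supply its own proof of this statement: it is quoted verbatim from \cite[Proposition~4.3]{Herschend_2011} as background and used as a black box. There is therefore nothing in the paper to compare your argument against.

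That said, your proposal is essentially correct and follows the standard line. One simplification is worth noting: you invoke Rickard's theorem and Keller's dg-enhancements to realise $F=[-d]\circ\Serre^{l}$ as a derived tensor product with an invertible bimodule complex, but this is unnecessary here. The Serre functor is \emph{by definition} $-\otimes^{\mathbb L}_{\Lambda}D\Lambda$, so $F=-\otimes^{\mathbb L}_{\Lambda}X$ with $X=(D\Lambda)^{\otimes^{\mathbb L} l}[-d]$ already an explicit complex of $\Lambda$-bimodules. Hypothesis~(ii) says $X\simeq\Lambda$ in $D^{b}(\Lambda)$, hence the bimodule cohomology of $X$ is concentrated in degree~$0$ and $H^{0}(X)$ is free of rank one on the right; the left action then gives the automorphism $\phi$ directly, and $X\simeq{}_{\phi}\Lambda$ as bimodules yields $F\simeq\phi^{*}$. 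This bypasses the delicate ``upgrade'' step you flag at the end.
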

This leads to a question which is still far from being answered in general.
\begin{quest}[{\cite[Remark ~1.6]{Herschend_2011}}]\label{quest:tfCY}Is every twisted fractionally Calabi--Yau algebra fractionally Calabi--Yau?
\end{quest}
Further, the trivial extension algebra of a (twisted) fractionally Calabi--Yau finite dimensional algebras of finite global dimension is (twisted) periodic \cite{chan2024periodic}. Hence Question \ref{quest:tfCY} is linked to the following conjecture of Erdmann and Skowro{\'n}ski \cite{Erdmann2015}. 
\begin{quest}[{\cite[Question ~1.4]{chan2024periodic}}]\label{quest:tPer}Is every finite-dimensional twisted periodic algebra periodic?
\end{quest}
In the case of finite posets with a unique maximal element or a unique minimal element, the answer to Question \ref{quest:tfCY} is positive as per \cite[Theorem~3.1]{rognerud2018bounded}. However, for a given incidence algebra the existence of an isomorphism \[\Serre^l (A)\simeq A[d]\] is still in general very hard to check \cite{rognerud2018bounded, Yildirim_2018}. In this article we provide a relaxation of \cite[Theorem~3.1]{rognerud2018bounded} to help overcome that difficulty.
\begin{thmIntro}Let $L$ be a finite lattice, $d$ and $l$ integers and $(C_{\alpha})_{\alpha\in L}$ be a family of indecomposable modules with simple head $S_{\alpha}$ and a boolean resolution. If for all $\alpha\in L$ it holds that $\Serre^{l}(C_{\alpha})\simeq C_{\alpha}[d]$, then $L$ is $\frac{d}{l}$- fractionally Calabi--Yau.
\end{thmIntro}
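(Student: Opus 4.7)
The plan is to verify the criterion $\Serre^{l}(\Lambda)\simeq \Lambda[d]$ of the cited Herschend--Iyama theorem for the incidence algebra $\Lambda$ of $L$. Because $L$ is a lattice it possesses both a unique maximum and a unique minimum, so \cite[Theorem~3.1]{rognerud2018bounded} automatically upgrades the resulting twisted fractional Calabi--Yau property to an honest one. The question therefore reduces to establishing $\Serre^{l}(P_{\alpha})\simeq P_{\alpha}[d]$ for each indecomposable projective.

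I would argue by induction on $\alpha$ along a linear extension of $L$. Since $C_\alpha$ has simple head $S_\alpha$, its projective cover must be $P_\alpha$, so a boolean resolution of $C_\alpha$ places $P_\alpha$ in degree zero and the remaining terms are sums of indecomposable projectives $P_\beta$ with $\beta$ strictly comparable to $\alpha$ (say $\beta>\alpha$ in the relevant convention). For the extremal base case the resolution degenerates to $C_\alpha = P_\alpha$, and the hypothesis is exactly the conclusion. For the inductive step, let $K_\alpha$ be the kernel of $P_\alpha\twoheadrightarrow C_\alpha$. The tail of the boolean resolution places $K_\alpha$ in the thick subcategory generated by the $P_\beta$ with $\beta>\alpha$, and hence by the inductive hypothesis together with the triangulated nature of $\Serre^{l}$, $\Serre^{l}(K_\alpha)\simeq K_\alpha[d]$.

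Applying $\Serre^{l}$ to the triangle $K_\alpha \to P_\alpha \to C_\alpha \to K_\alpha[1]$ produces a triangle whose outer vertices are isomorphic to $K_\alpha[d]$ and $C_\alpha[d]$. If one can identify the connecting morphism $C_\alpha[d]\to K_\alpha[d+1]$ with the $[d]$-shift of the original one, then uniqueness of cones (combined with Krull--Schmidt, using that $P_\alpha$ is the unique indecomposable with top $S_\alpha$) forces the middle vertex $\Serre^{l}(P_\alpha)$ to be $P_\alpha[d]$. Summing over all $\alpha\in L$ then yields $\Serre^{l}(\Lambda)\simeq\Lambda[d]$, as required.

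The main obstacle is precisely that final identification: in a triangulated category, the outer vertices of a distinguished triangle do not generally determine the middle vertex, since the connecting morphism could differ. To overcome this I would exploit the rigidity of the boolean structure: the maps along each face of the boolean lattice are forced up to scalar, so the inductive isomorphisms can be transported simultaneously through the whole twisted complex rather than one triangle at a time. Equivalently, one can realise $\Serre^{l}$ on the level of the boolean resolution itself as a quasi-isomorphic complex of projectives, and read off $\Serre^{l}(P_\alpha)$ as its degree-zero term after termwise comparison with the original resolution. The exponent $\tfrac{n}{m}$ in the conclusion should then come out to $\tfrac{d}{l}$ up to the gcd normalisation.
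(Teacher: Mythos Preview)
Your overall architecture matches the paper's: reduce via \cite[Theorem~3.1]{rognerud2018bounded} to $\Serre^{l}(P_{\alpha})\simeq P_{\alpha}[d]$, then induct on $\alpha$ using the resolution of $C_\alpha$. You also correctly locate the genuine difficulty. However, the two places where you wave at a resolution are where the paper does all the work, and neither of your sketches is sufficient.

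First, the sentence ``the tail of the boolean resolution places $K_\alpha$ in the thick subcategory generated by the $P_\beta$\ldots, and hence by the inductive hypothesis together with the triangulated nature of $\Serre^{l}$, $\Serre^{l}(K_\alpha)\simeq K_\alpha[d]$'' is simply false as stated. Knowing $F(X_i)\simeq X_i[d]$ for a generating family does \emph{not} propagate to the thick subcategory: if $Y$ is a cone on $f:X_1\to X_2$, then $F(Y)$ is a cone on $F(f)$, and you have no control over how $F(f)$ compares to $f[d]$ under your chosen isomorphisms. You acknowledge this later, but the paragraph as written asserts a non-theorem. Your appeal to ``uniqueness of cones combined with Krull--Schmidt'' does not help either: cones are not unique up to \emph{canonical} isomorphism, and there is no reason the middle term of the $\Serre^l$-image triangle should be indecomposable.

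Second, your proposed fix (``maps along each face are forced up to scalar, so transport simultaneously'') is the right intuition but omits the actual mechanism. The paper makes this precise via an \emph{inner induction} on the stupid truncations $\sigma_{\geq i}\PRes_{C_\alpha}$, proving $\sigma_{\geq i}\PRes_{C_\alpha}\simeq F(\sigma_{\geq i}\PRes_{C_\alpha})$ downward from $i=|C_\alpha|$ to $i=1$. At each step one must show a square of the form
\[
\begin{tikzcd}
\sigma_{\geq i}R\ar[r,"\partial_i{[i]}"]\ar[d,"\sim"'] & R_{i-1}[i]\ar[d,"\sim"]\\
F(\sigma_{\geq i}R)\ar[r] & F(R_{i-1}[i])
\end{tikzcd}
\]
can be made to commute by adjusting the vertical isomorphisms. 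This is where the strong/boolean hypothesis actually enters: the paper proves two lemmas (its Lemmas~\ref{lem:HoCatHom_Bool1} and~\ref{lem:HoCatHom_Bool2}) computing $\Hom_{\HomotopyCat}(\sigma_{\geq i}\PRes_C,(\PRes_C)_{i-1}[i])$ and $\Hom_{\HomotopyCat}(\PRes_C,\sigma_{\geq 1}\PRes_C)$, showing they are at most one-dimensional and are realised by the boundary maps. One then compares $\partial_i$ with $g^{-1}\circ F(\partial_i)\circ f$ and shows the discrepancy is an \emph{invertible} diagonal endomorphism $T$ of $R_{i-1}$ (this invertibility requires a further diagram chase, the paper's Step~3). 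Your alternative suggestion---realise $\Serre^l$ on the resolution and read off the degree-zero term---cannot work directly, since $\Serre^l(P_\alpha)$ has no reason to be projective.

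In short: right skeleton, but the flesh is missing exactly where the paper's proof is nontrivial. The two truncation Hom-lemmas and the rectification argument are the content of the theorem; without them the induction does not close.
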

This theorem does not provide with a recipe to find such appropriate families, but it suggests certain criteria which restrict the search for the perfect combinatorial candidates.

Fractionally Calabi--Yau posets are fascinating objects in part because of a hypothetical relation to \defn{product formulas} due to Chapoton \cite{chapoton2023posets}. In combinatorics, many families of sets $(S_n)_{n\in \mathbb{N}}$ can be counted by product formulas $|S_n| = \Pi_{i=1}^{n}\frac{D-d_i}{d_i}$ where the sum of the numerator and denominator is constant and equal to $D$. Such families include the Catalan numbers, the number of alternating sign matrices, the West family and the Tamari intervals family. Chapoton's highly conjectural explanation is that there should exist a partial order on $S_n$ whose derived category is $\frac{C}{D}$-Calabi--Yau, where $C = \sum_i D-2d_i$. Moreover, the bounded derived category in question should be equivalent to a type of Fukaya--Seidel category constructed from the data of $D$ and the $d_i$ coefficients. The conjecture also provides predictions regarding the Coxeter polynomial and the Milnor number of the singularity some of which can be tested with a computer on examples. Some consequences of these conjectures have been proven since (\cite{rognerud2018bounded}). The starting point of this paper was to prove another one of these resulting conjectures which was already studied in part in \cite{Yildirim_2018}. 
Observe that the binomial ${m+n\choose m}$ can be written as
\begin{equation}
\frac{m+n}{1}\frac{m+n-1}{2}\cdot \dots\cdot\frac{m + 1}{n}
\end{equation}
where $D = m + n + 1$. This is probably one of the most natural examples of product formula discussed above. The poset of order ideals of a product of total orders of length $m$ and $n$ has cardinality ${n+m \choose m}$ and we write it $J_{m,n}$. Using our results on \defn{boolean antichain modules} we are able to confirm Chapoton's prediction about the Calabi--Yau dimension of these posets. 
\begin{thmIntro}\label{thm1}The bounded derived category of $J_{m,n}$ is $\frac{mn}{m+n+1}$-Calabi--Yau.
\end{thmIntro}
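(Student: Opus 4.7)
The plan is to apply Theorem B to a carefully chosen family of indecomposable modules $(C_\alpha)_{\alpha\in J_{m,n}}$ over the incidence algebra of $J_{m,n}$, with target invariants $d = mn$ and $l = m+n+1$. Elements of $J_{m,n}$ correspond to Young diagrams fitting in the $m\times n$ rectangle, and this set carries a natural combinatorial symmetry (of complementation/rotation type) which should, at the module level, implement one application of the Serre functor up to a controlled shift. The role of the boolean antichain modules developed earlier in the paper is to produce, for each $\alpha$, an indecomposable module $C_\alpha$ with simple head $S_\alpha$ and a short boolean resolution well-adapted to this symmetry.

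First I would, for each $\alpha\in J_{m,n}$, describe $C_\alpha$ as the interval module attached to an interval $[\alpha,\beta(\alpha)]$ cut out by an antichain with the rigidity properties featured in the abstract. Because $J_{m,n}$ is the product of two chains, such intervals are ``sub-boxes'' of the ambient rectangle and their antichain presentations naturally give rise to boolean resolutions whose summands are indecomposable projectives. I would then verify indecomposability, that the head is exactly $S_\alpha$, and that the resolution is boolean; these verifications should reduce to combinatorics of order ideals in a product of two chains.

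Next I would analyse the Serre functor on these modules. Using the boolean resolution, $\Serre(C_\alpha)$ should itself be identifiable with a member of the family, attached to the image of $\alpha$ under the combinatorial symmetry; the goal is then to establish the pointwise identity
\[
\Serre^{m+n+1}(C_\alpha)\simeq C_\alpha[mn]
\]
uniformly in $\alpha$. The exponent $m+n+1$ is the Coxeter number of $A_{m+n}$, and the shift $mn$ should reflect the number of cells of the ambient rectangle. Once this is established for every $\alpha$, Theorem B concludes that $D^b(J_{m,n})$ is $\frac{mn}{m+n+1}$-Calabi--Yau. The main obstacle is precisely this Serre computation: isolating the correct family $C_\alpha$, tracking the iterated Serre functor through the boolean resolution, and proving that a full cycle of the combinatorial symmetry closes up exactly with the predicted shift, rather than merely up to some outer twist that would only yield the twisted version of the statement.
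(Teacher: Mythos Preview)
Your overall strategy matches the paper's exactly: apply Theorem~B to Y\i ld\i r\i m's family of interval modules, verify that each antichain is boolean (hence strong), and establish $\Serre^{m+n+1}(\PRes_\alpha)\cong\PRes_\alpha[mn]$ via the abacus/configuration combinatorics, where one application of $\Serre$ corresponds to a single leftward shift on the abacus. Two points of caution in your write-up, however: first, $J_{m,n}$ is \emph{not} the product of two chains but the lattice of order ideals of that product, so its intervals are not ``sub-boxes'' and the relevant modules are genuinely antichain modules in a distributive lattice rather than box-shaped intervals in a grid; second, for $C_\alpha$ to have simple head $S_\alpha$ the element $\alpha$ must be the \emph{top} of the interval, so the correct form is $[f(\alpha),\alpha]$ rather than $[\alpha,\beta(\alpha)]$. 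With these corrections your plan is the paper's proof.
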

As a corollary this gives a positive answer to the Chapoton-Y\i ld\i r\i m conjecture on cominuscule posets of type A and B \cite{Yildirim_2018}. 
\begin{corIntro}\label{thm:corConjYildChap}The bounded derived category of cominuscule posets of type $A$, $B$, $D$ are fractionally Calabi--Yau. For types A and B, the denominator is $h + 1$ where h is a constant associated with the root system. 
\end{corIntro}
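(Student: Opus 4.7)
The strategy is to identify each cominuscule Bruhat poset with a lattice $J_{m,n}$ (or reduce to one by tilting) and then invoke Theorem \ref{thm1}.

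For type $A_{n-1}$, the cominuscule flag varieties are the Grassmannians of $k$-planes in $n$-space; the Bruhat order on their Schubert cells is exactly the lattice $J_{k,n-k}$ of partitions fitting in a $k\times(n-k)$ rectangle. Theorem \ref{thm1} then gives Calabi--Yau dimension $\tfrac{k(n-k)}{n+1}$, and since the Coxeter number of $A_{n-1}$ is $h=n$, the denominator equals $h+1$ as predicted. For type $B_n$, the unique cominuscule flag variety is the odd-dimensional quadric $Q^{2n-1}$, whose Bruhat poset is a chain on $2n$ elements, i.e.\ the lattice $J_{1,2n-1}$. Theorem \ref{thm1} then yields Calabi--Yau dimension $\tfrac{2n-1}{2n+1}$, and since $h=2n$ for $B_n$, the denominator again equals $h+1$.

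For type $D_n$, the cominuscule varieties are the even-dimensional quadric (for $\omega_1$) and the two spinor varieties (for $\omega_{n-1},\omega_n$). None of these Bruhat posets is literally of the form $J_{m,n}$: the even quadric produces a chain with a doubled middle rank, while the spinor posets are distributive lattices of shifted partitions inside a staircase. Here I would proceed by applying the boolean-antichain relaxation theorem stated in the introduction: on each such lattice $L$, construct by hand a family $(C_\alpha)_{\alpha\in L}$ of indecomposable modules with simple heads and boolean resolutions, then verify the Serre-functor condition $\Serre^{l}(C_\alpha)\simeq C_\alpha[d]$ for appropriate $l,d$.

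The main obstacle is precisely the type $D$ spinor case: unlike in types $A$ and $B$, there is no direct isomorphism with a rectangle lattice $J_{m,n}$, so the boolean-antichain construction must be adapted to the combinatorics of shifted staircases, either by explicit recursion on $n$ or by exhibiting a tilting equivalence to an already-known fractionally Calabi--Yau algebra. This lack of a uniform identification is also why the corollary states the clean formula ``denominator $=h+1$'' only for types $A$ and $B$, and leaves the type $D$ exponent unspecified.
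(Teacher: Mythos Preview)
Your treatment of types $A$ and $B$ matches the paper's: both identify the cominuscule poset with a rectangle lattice $J_{m,n}$ (pattern $C_I$ in the paper's terminology) and invoke Theorem~\ref{thm1} directly, reading off the denominator $h+1$.

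For type $D$ the paper takes a shorter route than the one you outline. It treats only the even-quadric case ($\omega_1$), whose cominuscule poset follows pattern $C_{III}$, a chain with a single doubled rank. Rather than constructing boolean-antichain families on this lattice as you propose, the paper invokes Ladkani's flip-flop technique \cite{ladkani2007flipflop}, which produces a derived equivalence with a linear $A$-type poset---already known to be fractionally Calabi--Yau. This is precisely the ``tilting equivalence to an already-known fractionally Calabi--Yau algebra'' you list as a fallback option, made concrete; it bypasses the boolean-antichain machinery for $C_{III}$ entirely.

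The spinor cases $\omega_{n-1},\omega_n$ that you raise are of pattern $C_{II}$ (shifted staircase), the same shape as the type-$C$ Lagrangian poset. The paper explicitly sets $C_{II}$ aside as ``not a consequence of our work'', so its corollary for type $D$ should be read as covering only the even quadric. Your instinct that the spinor case is the genuine obstacle is correct, and neither your sketch nor the paper resolves it.
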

The key observation one needs for applying Theorem \ref{thm1} to cominuscule posets  is their classification into types $C_{I}$, $C_{II}$ or $C_{III}$ depicted below (\cite{thomas2009cominuscule}).
\begin{figure}[h]
	\begin{minipage}{0.3\textwidth}\centering
		\begin{tikzpicture}
		\draw[step=0.5cm, rotate = 45] (0, 0) grid (1.5, 1.5);
		\draw[step=0.5cm, rotate = 45] (1.99, 0) grid (3, 1.5);
		\draw[step=0.5cm, rotate = 45] (0, 1.99) grid (1.5, 3);
		\draw[step=0.5cm, rotate = 45] (1.99, 1.99) grid (3, 3);
		\draw[step=0.5cm, rotate = 45, dotted] (0, 0) grid (3, 3);
		\end{tikzpicture}
		\caption*{$C_{I}$}\label{fig:c1}
	\end{minipage}\hfill
	\begin{minipage}{0.3\textwidth}\centering
		\begin{tikzpicture}
		\draw[step=0.5cm, rotate = -45] (-3, 0) grid (-2, 1);
		\draw[step=0.5cm, rotate = -45, dotted] (-3, 0) grid (-1.5, 1.5);
		\draw[step=0.5cm, rotate = -45] (-1.5, 0) grid (-0.5, 0.5);
		\draw[step=0.5cm, rotate = -45] (-1.5, 0.5) grid (-1, 1);
		\draw[rotate = -45] (0,0) -- (-0.5, 0);
		\draw[rotate = -45, step = 0.5cm] (-3, 1.49) grid (-2, 2);
		\draw[step = 0.5cm, rotate = -45] (-3, 2) grid (-2.5, 2.5);
		\draw[step = 0.5cm, rotate = -45](-3, 2.5) -- (-3, 3);
		\end{tikzpicture}
		\caption*{$C_{II}$}\label{fig:c2}
	\end{minipage}\hfill
	\begin{minipage}{0.3\textwidth}\centering
		\begin{tikzpicture}
		\draw[rotate = 45, step = 0.5] (0,0) -- (0.5,0.5);
		\draw[rotate = 45, step = 0.5, dotted] (0.5,0.5) -- (1, 1);
		\draw[rotate = 45, step = 0.5, dotted] (1.5,1.5) -- (2,2);
		\draw[rotate = 45, step = 0.5] (2,2) -- (2.5, 2.5);
		\draw[rotate = 45, step = 0.5] (0.99,0.99) grid (1.51, 1.51);
		\end{tikzpicture}
		\caption*{$C_{III}$}\label{fig:c3}
	\end{minipage}
	\caption{The three types of cominuscule posets}\label{fig:tripplethreat}	
\end{figure}

Interestingly, there is no one to one correspondance between this classification and the ADE classification of the root poset one started with. However cominuscule posets of type $A$ and $B$ all follow the pattern of  $C_{I}$. Corollary \ref{thm:corConjYildChap} follows from that. Type $D$ follows pattern $C_{III}$ which is proved using Ladkani's flip flop techniques \cite{ladkani2007flipflop}. Type $C$ follows pattern $C_{II}$ and seems to be different and is not a consequence of our work. The conjecture is still open in this case.

Our proof of Theorem \ref{thm1} gives a good understanding of the Serre functor for this category. However, one would like to have a more structural reason behind the Calabi--Yau property for posets. For us, a good reason why $J_{m,n}$ should be fractionally Calabi--Yau is our second main result which is the following derived equivalence. 
\begin{thmIntro}\label{thm:DerEqTypeA} The algebra of the poset $J_{m,n}$ is derived equivalent to the higher Auslander algebra $A_{m+1}^{n-1}$. \end{thmIntro}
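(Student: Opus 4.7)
The plan is to invoke Rickard's derived Morita theorem by constructing an explicit tilting complex $T$ in $D^b(J_{m,n})$ whose endomorphism algebra is isomorphic to $A_{m+1}^{n-1}$. A combinatorial sanity check already singles out the right shape: $|J_{m,n}|$ and the number of isoclasses of simples of $A_{m+1}^{n-1}$ both equal $\binom{m+n}{n}$, since both sets are naturally parametrized by weakly increasing $n$-tuples $0\leq a_{1}\leq \cdots\leq a_{n}\leq m$. The natural candidate for $T$ is therefore $T = \bigoplus_{\alpha \in J_{m,n}} C_{\alpha}[h(\alpha)]$, where the $C_\alpha$ are the indecomposable modules with boolean resolutions from Theorem B and $h(\alpha)$ is a homological shift to be chosen, for instance an antichain cardinality or a rank function attached to $\alpha$.

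The verification of the tilting axioms splits in two. For rigidity, one must show that $\Hom_{D^b}(T, T[i])$ vanishes for $i\neq 0$, that is, for each pair $(\alpha,\beta)$, that $\Hom_{D^b}(C_{\alpha}, C_{\beta}[j])$ is concentrated at $j=h(\beta)-h(\alpha)$. The boolean resolutions allow such Ext groups to be expressed as the homology of explicit combinatorial complexes supported on antichains inside intervals of $J_{m,n}$, and the rigidity properties of boolean antichains should force the higher Ext contributions to cancel provided the shifts $h(\alpha)$ are aligned with that combinatorics. For generation, I would argue by induction on the rank of $\alpha$: the simple at the minimum of $J_{m,n}$ is itself one of the $C_{\alpha}$, and the triangles coming from the boolean resolutions allow every other simple to be reached from the $C_\alpha$, so the thick subcategory they generate is all of $D^b(J_{m,n})$. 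To identify $\End_{D^b}(T)$ with $A_{m+1}^{n-1}$, one can either appeal to Iyama's intrinsic characterization of higher Auslander algebras of type $A$, checking that $\End_{D^b}(T)$ has global dimension $n$, dominant dimension at least $n$, and admits the appropriate $(n-1)$-cluster tilting module, or compute the Gabriel quiver and relations of $\End_{D^b}(T)$ directly and match them with the commutativity presentation of $A_{m+1}^{n-1}$.

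The main obstacle is the combinatorial choice of the shift function $h$: without a careful alignment of the boolean resolutions of $C_\alpha$ and $C_\beta$, the rigidity computation fails, whereas the correct alignment should also be compatible with the Serre action on $T$ dictated by the fractional Calabi--Yau data of Theorem \ref{thm1}, which provides a useful consistency check. Once rigidity and generation are secured, the identification with $A_{m+1}^{n-1}$ reduces to a combinatorial recognition problem that the shared $n$-tuple parametrization on both sides should render tractable.
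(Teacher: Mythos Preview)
Your overall strategy is exactly the one the paper follows: build a tilting object $T=\bigoplus_{\alpha\in J_{m,n}}\PRes_{\alpha}[h(\alpha)]$ from the boolean antichain modules (restricted to \emph{plain} partitions, i.e.\ $\mu_{r+1}=0$), check generation and rigidity, then compute $\End(T)^{op}$ by quiver and relations and match it with the standard presentation of $A_{m+1}^{n-1}$. Your generation argument is also correct and is what the paper does in Proposition~\ref{prop:thicc}.

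What is missing is precisely what you flag as ``the main obstacle'': the shift function. The paper takes
\[
h(\alpha)=\kappa_\alpha:=\sum_{i\in S_\alpha}\lambda_i,
\]
the sum of the nonzero values of the partition. This is not an arbitrary rank function; it is the unique choice (up to additive constant) compatible with the description of morphisms in $\mathcal{Y}_{m,n}$: elementary extensions $\PRes_\alpha\to\PRes_{q_i(\alpha)}[1]$ decrease one $\lambda_i$ by $1$, while elementary degree-zero maps $\PRes_\alpha\to\PRes_{p_i(\alpha)}$ only change multiplicities and leave the set $\{\lambda_i\}$ fixed. Once you know that every nonzero morphism $\PRes_\alpha\to\PRes_\beta[p]$ factors canonically as a pure extension $\PRes_\alpha\to\PRes_{q_J(\alpha)}[|J|]$ followed by a degree-zero map (Proposition~\ref{prop:factHomYildCat}), rigidity is a one-line computation: $|J|=\kappa_\alpha-\kappa_{q_J(\alpha)}=\kappa_\alpha-\kappa_\beta$, so $p=0$.

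Establishing that factorization, however, is the real content of the proof and takes the paper most of Sections~\ref{subsec:homsets1} and~3.2: one needs the full classification of morphisms and extensions between the $\PRes_\alpha$, their decomposition into elementary arrows indexed by bead-moves $\sigma_k^-$ on the abacus, and the complete list of relations among those arrows (Corollary~\ref{cor:Pres}). The paper then identifies $\End(T)^{op}$ via the explicit quiver-with-relations presentation (your second option), passing to complementary configurations to match the $A_{m+1}^{n-1}$ presentation exactly. Your first option, verifying Iyama's global/dominant dimension and cluster-tilting criteria directly on $\End(T)$, is not pursued and would likely require at least as much structural information about the morphisms, so it does not seem to buy a shortcut.
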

Higher Auslander algebras were introduced by Iyama in \cite{iyama2007auslander1} as part of a series of seminal articles on higher representation theory. Higher Auslander algebras of type A were soon after described in \cite{iyama2008typeA} and are known to be fractionally Calabi--Yau. As a corollary of Theorem \ref{thm1} and Theorem \ref{thm:DerEqTypeA} we have a new proof of an already known theorem.
\begin{thmIntro}\label{thm:fCYTypeA} Higher Auslander algebras of type $A$ are fractionally Calabi--Yau.
\end{thmIntro}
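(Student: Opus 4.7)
The plan is to deduce Theorem \ref{thm:fCYTypeA} directly from the two main results already stated, namely Theorem \ref{thm1} and Theorem \ref{thm:DerEqTypeA}. By Theorem \ref{thm:DerEqTypeA}, any higher Auslander algebra of type $A$, which can be written in the form $A_{m+1}^{n-1}$ for suitable integers $m, n \geq 1$, is derived equivalent to the incidence algebra of $J_{m,n}$. By Theorem \ref{thm1}, the bounded derived category $D^b(J_{m,n})$ is $\frac{mn}{m+n+1}$-Calabi--Yau. So it suffices to observe that the fractional Calabi--Yau property is a derived invariant.

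The key point I would use is that the Serre functor $\Serre$ is characterized intrinsically on a $\Hom$-finite $\field$-linear triangulated category by Serre duality, hence is unique up to natural isomorphism whenever it exists. Consequently, any triangle equivalence $F : D^b(\field J_{m,n}) \xrightarrow{\sim} D^b(A_{m+1}^{n-1})$ satisfies $F \circ \Serre \simeq \Serre \circ F$ and of course commutes with the suspension $[1]$. Thus an isomorphism $\Serre^{m+n+1} \simeq [mn]$ on one side transports through $F$ to the analogous isomorphism on the other side, so $A_{m+1}^{n-1}$ is $\frac{mn}{m+n+1}$-Calabi--Yau.

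The only bookkeeping step is to check that as $(m,n)$ ranges over pairs of positive integers, the algebras $A_{m+1}^{n-1}$ exhaust the higher Auslander algebras of type $A$ in the indexing of \cite{iyama2008typeA}; this is routine from matching the two conventions. There is essentially no obstacle in this argument: the content lies entirely in Theorems \ref{thm1} and \ref{thm:DerEqTypeA}, and Theorem \ref{thm:fCYTypeA} is their immediate joint corollary.
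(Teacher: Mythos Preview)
Your proposal is correct and is exactly the argument the paper has in mind: as stated in the introduction, ``with Theorem~\ref{thm1} we have a new proof for an already known theorem'', namely Theorem~\ref{thm:fCYTypeA}, via the derived equivalence of Theorem~\ref{thm:DerEqTypeA} and the derived-invariance of the fractional Calabi--Yau property. The substance of the paper lies in establishing Theorems~\ref{thm1} and~\ref{thm:DerEqTypeA}; once these are in place, Theorem~\ref{thm:fCYTypeA} is, as you say, an immediate corollary.
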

Previous proofs of this result have different flavours. The first one stemmed from symplectic geometry \cite{dyckerhoff_jasso_lekili_2021}, the second one, from the theory of infinity categories \cite{jasso2019CY1} and the most recent from an intricate study of the properties of a certain preprojective algebra and its Nakayama automorphism, linking it to the the Serre functor \cite{grant2020functors}. The proof presented here is more combinatorial. Of course knowing Theorems \ref{thm:DerEqTypeA} and \ref{thm:fCYTypeA} also gives a proof of Theorem \ref{thm1}.
 It is also satisfying to note that this derived equivalence ties back into the Chapoton conjectures as a partially wrapped Fukaya category can be associated to higher Auslander algebras of type $A$ \cite{dyckerhoff_jasso_lekili_2021}. A more recent preprint \cite{didedda2023symplectic} also links the higher Auslander algebras of type A to Fukaya Seidel Categories with the Milnor number predicted by Chapoton. 
\paragraph{Acknowledgements} I would like to thank my supervisor Baptiste Rognerud for introducing me to the subject as well as for all the discussion, guidance and careful reading of my work at every step of the way. I would like to thank my second supervisor, Bernhard Keller for Figure \ref{fig:keller}. I also want to thank the anonymous referees for their useful remarks and pointing out a missing references to \cite{dyckerhoff_jasso_lekili_2021} in the last section. This work constitutes part of my PhD thesis \cite{gottesman2024these} and is the object of a published extended abstract \cite{gottesman2024fpsac}.
\subsection{Notation}
\paragraph{Generalities} Let $\field$ be a field and $X$ a finite partially ordered set or poset. Define its incidence algebra $\IncAlg = \IncAlg_{\field}(X)$ over $\field$ to be the $\field$-vector space with basis pairs $(x, y)$ such that $x\leq y$ with multiplication defined by 
	\begin{equation*}
	(x, y)(z, t) = \begin{cases}(x, t)& \text{if } y=z,\\
				0 &\text{otherwise}.\end{cases}
	\end{equation*}
For $x\in X$ we write $e_x = (x, x)$ the primitive idempotent. Then we have $1_\IncAlg = \sum_{x\in X} e_x$.
Throughout this work we consider finite dimensional left modules over $\IncAlg$. For every element $x\in X$ the associated simple module is $S_x \cong \field$ with action $(y, t)\cdot 1_{\field} = 0$ unless $y=t=x$ in which case $e_x\cdot 1_{\field} = 1_{\field}$. Its projective cover $P_x = \mathcal{A}\cdot e_x$ has basis $\{(y, x)|y\leq x\}$.  Its injective hull is the injective indecomposable $I_x = (e_x \cdot\mathcal{A})^*$ and has basis $\{(x, y)^*| x\leq y\}$. Recall that morphisms between the projective indecomposables are characterised by
	\begin{equation*}
	\Hom_{\IncAlg} (P_x, P_y) = \Hom_{\IncAlg}(\mathcal{A} e_x, \mathcal{A} e_y)\cong\begin{cases}e_x \mathcal{A} e_y \cong \field&\text{if }x\leq y,\\0 &\text{otherwise.}\end{cases}
	\end{equation*}
We denote the canonical inclusion as $\iota_x^y: P_x\hookrightarrow P_y$ whenever $x\leq y$ which is the right multiplication by $(x, y)$. More generally for any left $\IncAlg$-module $M$, we have $\Hom_{\IncAlg}(P_x, M)\cong e_xM$. This isomorphism makes the following diagram commute
	\begin{equation}\label{eq:sqIota}
	\begin{tikzcd}
	f\ar[d, mapsto]\ar[r, phantom, "\in" description]&\Hom_{\IncAlg} (P_x, M)\ar[d]&\ar[l, "\circ \iota_x^y"]\ar[d] \Hom_{\IncAlg} (P_y, M)\ar[r, phantom, "\ni" description]&g\ar[d, mapsto]\\
f(e_x)\ar[r, phantom, "\in" description, near start]&e_xM&\ar[l, "{(x,y)\cdot}"]e_yM\ar[r, phantom, "\ni" description, near end]&g(e_y)
	\end{tikzcd}
	\end{equation}
The total hom complex $\Hom^{\bullet}_{\IncAlg}(C, M)$ where $C$ is a chain complex $C = ((C_n)_n, (\partial_n))$ of $\IncAlg$ modules and $M$ is an $\IncAlg$-module, is the complex 
\[\dots\to\Hom_{\IncAlg }(C_n, M) \xrightarrow{\partial_{n+1}^* }\Hom_{\IncAlg }(C_{n+1}, M) \to \dots\]
Note that we omit a conventional sign for the boundary map as it plays no role in our computations. This is a cochain complex as the functor $\Hom_{\IncAlg} (-, M)$ is contravariant. Assuming that $C_n = \bigoplus_{x\in S_n} P_x$ with $S_n$ a finite multi-subset of $ X$ and taking its cohomology gives shifted morphisms in the homotopy category $\HomotopyCat(\IncAlg)$ \cite[Lemma 3.7.10]{zimmermann2014representation}, which in turn are isomorphic to the shifted morphism in the derived category because the source is a perfect complex: 
\begin{equation}\label{eq:TotalHomHom}
\HH^{i}(\Hom^{\bullet}_{\IncAlg}(C, M)) \cong \Hom_{\HomotopyCat(\IncAlg)}(C, M[i])\overset{u}{\cong}\Hom_{D^b}(C, M[i]).
\end{equation}
Most computations will be carried out explicitly in the homotopy category. When needed the passage from one to the other will be discussed. Finally, using equation (\ref{eq:sqIota}) we have an isomorphism of cochain complexes 
\begin{equation}\label{eq:totHomComp}
\begin{tikzcd}
\dots\ar[r]&\Hom_{\IncAlg}(\bigoplus_{x\in S_n} P_x, M)\ar[r, "\partial_{n+1}^* "]\ar[d]&\Hom_{\IncAlg}(\bigoplus_{x\in S_{n+1}} P_x, M)\ar[r]\ar[d]&\dots\\
\dots\ar[r]&\bigoplus_{x\in S_n}e_xM\ar[r]& \bigoplus_{x\in S_{n+1}}e_xM\ar[r]&\dots
\end{tikzcd}
\end{equation}
The boundary maps of the bottom complex are linear combinations of left multiplication by elements $(x, y)$ of the algebra with coefficients inherited from the top complex.

\paragraph{Antichain Modules} 
Let $(L, \wedge, \vee)$ be a finite lattice. We write $\hat{1}$ its greatest element and $\hat{0}$ its least one. For elements $a, b$ of $L$, let $[a, b] :=\{c\in L| a\leq c\leq b\}$ denote the interval of $L$. Let $C$ be an \emph{antichain} in $L$ \emph{i.e.} a subset $C$ of $L$ that consists of pairwise incomparable elements of $L$. We say an antichain $C$ is below an element $\alpha$ of $L$ if for all $c\in C$, we have $c\leq \alpha$, and when needed we record this information by the notation $C_{\alpha}$. Following \cite[Proposition 2.1]{iyama2022distributive} we associate to an antichain $C = \{c_1, \dots, c_r\}$ the submodule
$$N_C = \sum_{i=1}^{r} A\cdot(c_i,\hat{1})$$
of the projective indecomposable $P_{\hat{1}}$ generated by the antichain. It follows directly from the same proposition that there is a one to one correspondance between antichains and submodules of $P_{\hat{1}}$. The \emph{antichain module} associated to $C$ is defined by
$$M_C := P_{\hat{1}}/N_C.$$
We will talk of antichain modules below $\alpha\in L$ by restricting to the sublattice $[\hat{0}, \alpha]$ of $L$. Then $\alpha$ is the greatest element of this lattice and there is a bijection between submodules of $P_\alpha$ and antichains below $\alpha$. The corresponding modules will be denoted $N_C^{\alpha}$ and $M_C^{\alpha}$.
As our main example consider $a\leq b$ in $L$. The maxima of the set of elements of $L$ which are strictly less than $b$ but not above $a$ form an antichain $C$ and the antichain module below $b$ associated to $C$ has support the interval $[a, b]$. The corresponding antichain module is usually called an \defn{interval module}. In the rest of the paper we identify intervals with their interval modules.
\begin{lem}
Intervals are antichain modules.
\end{lem}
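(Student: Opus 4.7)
The plan is to verify the assertion made in the paragraph preceding the lemma: given $a\leq b$ in $L$, the antichain $C$ formed by the maxima of
\[T := \{x \in L : x < b \text{ and } x \not\geq a\}\]
gives an antichain module $M_C^b$ whose composition factors are exactly $\{S_y : a\leq y\leq b\}$, each with multiplicity one, realising the interval module on $[a,b]$.

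First I would check that $C$ is indeed an antichain below $b$: since $L$ is finite, $T$ admits maximal elements; distinct maxima are pairwise incomparable; and every $c\in C\subset T$ satisfies $c<b$. So $C$ is an antichain in the sublattice $[\hat 0,b]$ and $M_C^b$ is well-defined. Next I would compute the module explicitly. The projective $P_b$ has $\field$-basis $\{(y,b):y\leq b\}$, and by \cite[Proposition 2.1]{iyama2022distributive} the submodule $N_C^b=\sum_{c\in C}\IncAlg\cdot(c,b)$ has basis $\{(y,b):y\leq c\text{ for some }c\in C\}$. Hence $M_C^b$ admits a $\field$-basis indexed by
\[U := \{y\leq b : y\not\leq c\text{ for every }c\in C\},\]
with a one-dimensional stalk $e_yM_C^b$ for each $y\in U$.

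The key combinatorial step is to identify $U$ with the interval $[a,b]$. For this, one observes that $T$ is a down-set of $L$: if $y\leq x\in T$, then $y\leq x<b$ forces $y<b$ (equality would give $b\leq x$), and $y\geq a$ would propagate to $x\geq a$, contradicting $x\in T$. A finite down-set coincides with the union of principal ideals generated by its maxima, so for any $y\leq b$,
\[y\in T \iff y\leq c\text{ for some }c\in C.\]
Taking complements inside $[\hat 0,b]$ gives $U=\{y\leq b:y\notin T\}=\{y\leq b:y\geq a\text{ or }y=b\}=[a,b]$, the last equality using $a\leq b$. The transition maps $e_yM_C^b\to e_zM_C^b$ for $y\leq z$ in $[a,b]$ are induced by multiplication by $(y,z)$ and send $(y,b)\mapsto(y,b)$ up to the identification of bases, so they are identities between one-dimensional spaces. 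Thus $M_C^b$ is the interval module on $[a,b]$.

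There is no substantive obstacle: the argument is bookkeeping once the down-set property of $T$ is recorded. The only subtle point is the special treatment of $y=b$ (which survives in the quotient because $b$ is strictly greater than every element of $C$), which is what makes $b$ belong to the support alongside the elements $y\geq a$.
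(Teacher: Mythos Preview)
Your proof is correct and follows exactly the approach sketched in the paragraph preceding the lemma; the paper itself offers no formal proof beyond that paragraph, so you have simply filled in the details as intended. One cosmetic slip: the transition map induced by left multiplication by $(y,z)$ goes from $e_zM_C^b$ to $e_yM_C^b$ (it sends $(z,b)\mapsto(y,b)$), not the other way around, but this does not affect the argument.
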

With the convention of the previous paragraph, morphisms between interval modules follow a simple rule
\begin{equation}\label{eq:morphIntS}
\Hom_{\IncAlg}([a, b], [c, d]) = \begin{cases}\field &\text{if } a\leq c \leq b\leq d,\\
									0 &\text{otherwise.} \end{cases}
\end{equation}
By  \cite[Theorem 2.2]{iyama2022distributive}, for every antichain $C$ of cardinal $r$ 
\ifBelowAlpha
below an element $\alpha$ of a lattice $L$ the associated antichain module $M^{\alpha}_C$ has a projective resolution $\PRes_{C}^{\alpha}$
$$0\to P_r \to \dots\to P_0 \to M_C^{\alpha} \text{ where } P_0 = P_{\alpha} \text{ and } P_l = \bigoplus_{\substack{S\subseteq C\\|S|=r}} P_{\wedge S}\text{ for } 1\leq l\leq r $$
\else
of a lattice $L$ the associated antichain module $M_C$ has a projective resolution $\PRes_{C}$ of the form
$$0\to P_r \to \dots\to P_0 \to M_C \text{ where } P_0 = P_{\hat{1}} \text{ and } P_l = \bigoplus_{\substack{S\subseteq C\\|S|=l}} P_{\wedge S}\text{ for } 1\leq l\leq r. $$ 
Similarly, define a resolution $\PRes_C^{\alpha}$ for the antichain module $M_C^{\alpha}$ below $\alpha$ by replacing $P_{\hat{1}}$ by $P_{\alpha}$.
\fi
The boundary maps are defined by fixing an arbitrary total ordering of elements in $C$ and, in each degree, setting the following maps between the indecomposable summands of the source and target in each degree:
\begin{align}\label{eq:boundComp}\begin{split}
P_{\wedge  S} \quad &\to\quad\quad\quad P_{\wedge T}\\
\big(x, \wedge S\big)\quad &\mapsto \quad\begin{cases}(-1)^{|i|_S}\big(x,\wedge T\big) &\text{if }T \sqcup \{i\} = S, \\ \quad \quad 0&\text{otherwise}\end{cases}
\end{split}\end{align}
for each $S = \{i_1, \dots, i_k\}$ and $\big(\wedge S,\wedge T\big)\in P_{\wedge T}$ where $|i|_S = |\{j\in S | j \leq i\}|$.
\subsection{Detailed outline}
In Section \ref{section4} we introduce four properties on antichains in lattices: \defn{intersectivity}, \defn{inclusivity}, \defn{strength} and \defn{booleanity}. They are related as in Figure \ref{fig:PropsOfAntichians}.  

\begin{wrapfigure}[12]{l}{0.4\textwidth}\centering
		\begin{tikzpicture}[squarednode/.style={rectangle, draw, fill=gray!5, very thick, minimum size=5mm}]
		\node[squarednode] (intersective) {\text{intersective}};
		\node[squarednode] (inclusive)  [below = of intersective] {\text{inclusive}};
		\node[squarednode] (strong)  [below = of inclusive] {\text{strong}};
		\node[squarednode] (boolean) [right = of inclusive] {\text{boolean}};
		\draw (-1.3, -3.7) rectangle  (1.3, .4) ;
		\node at (1.15, -1.6) (box) {};
		\draw[{implies}-{implies}, double] (box) --node[above]{def} (boolean);
		\draw[{implies}-{implies}, double] (inclusive) --node[left] {\ref{lem:InclusiveStrong}} (strong);
		\draw[-{implies}, double](intersective) -- node[left]{\ref{lem:Intersektstronk}}(inclusive);
		\end{tikzpicture}
	\caption{Properties of Antichains}\label{fig:PropsOfAntichians}
\end{wrapfigure}
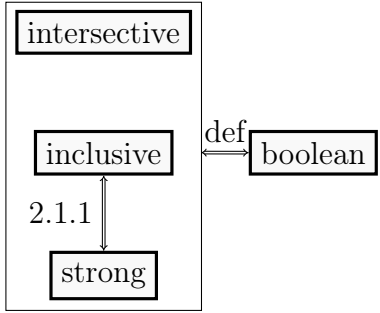 

In Lemma (\ref{lem:BoolBool}) we show that an antichain $C$ is boolean \emph{if and only if} it spans a boolean sublattice in $L$. This justifies the terminology. Boolean antichains have a crucial property (Theorem \ref{lem:homsBool}): hom spaces from a \defn{boolean} antichain module to an \defn{interval} are at most one dimensional and are concentrated in one degree. This does not hold for antichains which are only strong. However, certain hom spaces can still be controlled well enough. More specifically Lemmas \ref{lem:HoCatHom_Bool1} and \ref{lem:HoCatHom_Bool2} describe the maps between a resolution of a \defn{strong} antichain module and its truncations. Using these lemmas we prove Theorem \ref{claim:Gen} which is the main technical result of this paper.

	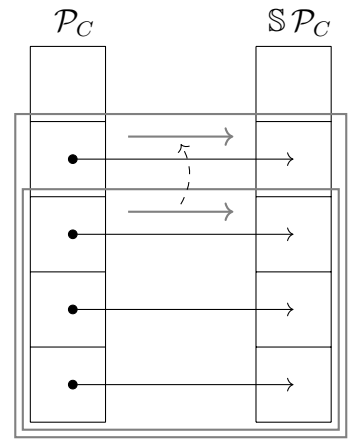
\begin{wrapfigure}[15]{r}{0.4\textwidth}\centering
	\vspace{-15pt}
	\begin{tikzpicture}
	\draw[step = 1cm](0,0) grid (1,5) node[above] {$\PRes_{C}\qquad$};
	\draw[step = 1cm](3,0) grid (4,5) node[above] {$\Serre^d \PRes_{C}[l]\qquad$};;
	\draw[thick, gray] (-0.1, -0.1) rectangle (4.1, 3.1);
	\draw[thick, gray] (-0.2, -0.2) rectangle (4.2, 4.1);
	\draw[->, thick, gray] (1.3, 2.8) -- (2.7, 2.8);
	\draw[->, thick, gray] (1.3, 3.8) -- (2.7, 3.8);
	\foreach \n in {.5, 1.5, 2.5, 3.5}{\draw[{Circle[]}->] (.5, \n) -- node[below left]{\tiny (OIH)} (3.5, \n);}
	\draw[->, dashed] (2, 2.9) to [bend right] node[right]{\tiny (II)} (2, 3.7);
	\end{tikzpicture}
	\caption{Isomorphisms of graded objects out of homogeneous one}\label{fig:keller}
	\end{wrapfigure}

It is a categorification theorem that builds upon and broadens \cite[Theorem~3.1]{rognerud2018bounded} which states that the Calabi--Yau property can be checked on the projective indecomposable modules in a finite poset with a least or greatest element. Our result implies that, for finite lattices, the property can be checked on any family of non zero strong antichain modules as long as it is sufficiently large. The proof consists in constructing isomorphisms $\Serre^{d}(P)\cong P[l]$ for projective indecomposable modules using the isomorphisms $\Serre^{d}(\PRes_C)\cong \PRes_C[l]$ on the family of antichain modules. We proceed by strong induction on the elements of the lattice (OIH). The main difficulty lies in constructing an isomorphism between graded objects out of ismorphisms in each degrees. This will take the form of a so called inner induction (II). See Figure \ref{fig:keller} for an illustration. The fact that these objects come from strong antichains will be crucial and Lemmas \ref{lem:HoCatHom_Bool1} and \ref{lem:HoCatHom_Bool2} will be used to ensure that the required squares commute by \defn{rectifying} any discrepancy. As result we can apply the axiom \textbf{TR3} of triangulated categories and the 2 out of 3 Lemma \ref{lem:2out3} to gain the isomorphism we want.
\par Theorem \ref{claim:Gen} will be applied in Section \ref{section2} to the incidence algebra of the lattice of order ideals of the product of two linearly ordered sets. To discuss antichains it is convenient to see the elements of this lattice as paths in grids. We consider a family of antichains introduced in \cite{Yildirim_2018}. Its associated objects are written $(\PRes_{\alpha})_{\alpha\in J_{m,n}}$. After recalling the main arguments of the proof from \cite{Yildirim_2018} and adapting them to our convention, we will show that these antichains satisfy the conditions of Theorem \ref{claim:Gen} and thus prove Theorem \ref{thm1}. 
\par In Section \ref{section3} we describe morphisms between the objects $\PRes_{\alpha}$ and their shifts. The goal of Section \ref{section3} is to prove Theorem \ref{thm:fCYTypeA}. We call $\mathcal{Y}_{m,n}$ the full subcategory of $D^{b}(\IncAlg)$ whose objects are the antichain modules $\PRes_{\alpha}$ and their shifts. Knowing that they are intervals and that their corresponding antichains are boolean  Theorem \ref{lem:homsBool} implies that each hom space is of dimension at most one. Proposition \ref{prop:factHomYildCat} gives the following canonical factorisation of morphisms in $\mathcal{Y}_{m,n}$ into extensions and degree zero morphisms.

\begin{equation}\label{eq:IntroHomDecomp}
\begin{tikzcd}[column sep = tiny]
\PRes_{\alpha}\ar[rr]\ar[dr]&&\PRes_{\beta}[|J|]\\
&\PRes_{q_J(\alpha)}[|J|]\ar[ur]&
\end{tikzcd}
\end{equation}
The extension is explicitly described in Proposition \ref{fact:PurExt} using transformations $q_J$ on partitions that appear in Definition \ref{def:qj}. The extension is further decomposed into \defn{irreducible extensions} in Lemma \ref{lem:decompExt}. The proof relies on computations in the homotopy category of complexes as the source of the morphism is identified with its projective resolution and the target is an interval. 
It follows from Proposition \ref{prop:factHomYildCat} that the degree one morphisms are parametrised by a subset of the original antichains themselves. It is also crucial to the proof that not all subsets of the antichains yield extensions. Definition \ref{def:allowedYild} provides a characterisation of these subsets which we call \defn{allowed} subsets. 

The second component of the composition in equation (\ref{eq:IntroHomDecomp}) is a morphism between intervals of the form $[f(\alpha), \alpha]$. Morphisms between intervals are described by equation (\ref{eq:morphIntS}) and comparison of partitions is done termwise. Most of the proofs amount thus to checking inequalities of the form:
\[f(\alpha)_i\leq f(\beta)_i \leq \alpha_i \leq \beta_i\]
for appropriate indices $i \leq m$. Proposition \ref{lem:hom0} gives alternative characterisations of morphisms between the objects $\PRes_{\alpha}$. Corollary \ref{nota:Hom0} highlights certain morphisms 
$\PRes_{\alpha}\to\PRes_{p_J(\alpha)}$ using the new characterisations and transformations $p_J$ on partitions that appear in Definition \ref{def:allowedDual}. Finally, Lemma \ref{lem:decomHom0} provides a decomposition into irreducible morphisms. 

Next, Proposition \ref{prop:RelYildCat} describes the relations between these morphisms. It uses Lemma \ref{lem:RelIdentification} which identifies the relations using the same manipulation of morphisms between intervals and complexes as before. In the process, in Proposition \ref{prop:configArrows} we interpret the morphisms in a combinatorial setting that links $\mathcal{Y}_{m,n}$ to \defn{Higher Auslander algebras of type A}. In Corollary \ref{cor:Pres} we give three slightly different yet convenient presentations of the category $\mathcal{Y}_{m,n}$ with generators and relations. This leads us to prove Theorem \ref{thm:fCYTypeA}. In Proposition \ref{prop:thicc} and Lemma \ref{lem:shiftTilting} we extract a tilting object from the category $\mathcal{Y}_{m,n}$.

\begin{equation}\label{eq:tiltings}
T: = \bigoplus_{\alpha\in J_{m,n}} \PRes_{\alpha}[i_{\alpha}]
\end{equation}

The integers $i_{\alpha}$ ensure that $T$ has no self extensions and are encoded using only the partitions $\alpha$. The fact that $\Thicc(T)$ generates the derived category can already be seen in the proof of Theorem \ref{claim:Gen} as every projective is obtained as a succession of cones from the family of antichains.
One of the presentations of Corollary \ref{cor:Pres} concludes the proof of Theorem \ref{thm:fCYTypeA} while another shows that $\End(T)^{op}$ is isomorphic to the \defn{quadratic dual} $(A_{n+1}^{m-1})^{!}$ of the higher Auslander algebra of type $A$. 

\section{Antichain modules and the Calabi--Yau property}
\label{section4}
This section contains technical results about certain classes of antichain modules, their morphisms and extensions. The main one gives a way to study the fractionally Calabi--Yau property on lattices. Once the correct antichains are identified, the proof is formal.
\subsection{Boolean antichains}
Let $C$ be an antichain of size $r$ in a lattice $L$ and  $M^{\alpha}_C$ its associated antichain module below $\alpha\in L$.
Note that in degree $i$ of the projective resolution $\PRes^{\alpha}_C$ of $M^{\alpha}_C$ there are $\binom{r}{i}$ indecomposable components in the direct sum.  If one forgets the modules, the complex has the shape of the power set of $C$, however the indices of the modules in each degree are not necessarily in bijection with the lattice $(\PRes(C), \subseteq, \cup, \cap)$ (see Figure \ref{fig:antichains1}). 
\begin{figure}[h]
	\begin{minipage}{0.3\textwidth}\centering
		\begin{tikzpicture}
			\draw (0,0)  -- (1,1) -- (1, 2) -- (0,3)--(-1,2)
			 --(-1, 1)--(0,0)--(0,1)--(-1, 2);
			\draw (-1, 1) -- (0,2) -- (1,1);
			\draw (0,1) -- (1, 2);
			\draw (0, 2) -- (0,3);
			\fill[red] (-1, 2)node["$c_1$" color = black]{} circle (2pt);
			\fill[red] (0, 2)node["$c_2$" color = black, left]{} circle (2pt);
			\fill[red] (1, 2)node["$c_3$" color = black]{} circle (2pt);
		\end{tikzpicture}
		\caption*{Boolean antichain}\label{fig:bool}
	\end{minipage}\hfill
	\begin{minipage}{0.4\textwidth}
	\hspace{-20pt}
		\begin{tikzpicture}
			\draw (0,0)  -- (1,1) -- (1, 2) -- (0,3)--(-1,2)
			 --(-1, 1)--(0,0)--(0,1)--(1, 2);
			\draw (-1, 1) -- (0,2) -- (1,1);
			\draw (0,1) -- (-1, 1.6);
			\draw (0, 2) -- (0, 3);
			\fill[red] (-1, 2)node["$c_1$" color = black]{} circle (2pt);
			\fill[red] (0, 2)node["$c_2$" color = black]{} circle (2pt);
			\fill[red] (1, 2)node["$c_3$" color = black]{} circle (2pt);
			\fill (-1, 1.6)node[label=left:$\substack{(c_2\wedge c_1)\\ \vee (c_3\wedge c_1)}$]{} circle (2pt);
			\draw (-1, 2) circle (3pt);
		\end{tikzpicture}\centering
		\caption*{Strong, not intersective, antichain}\label{fig:stronk}
	\end{minipage}\hfill
	\begin{minipage}{0.3\textwidth}
		\begin{tikzpicture}
			\draw (0,0) -- (-1, 1) -- (0,2) -- (0,1) -- (0,0) -- (1,1) -- (0,2);
			\fill[red] (-1, 1)node["$c_1$" color = black]{} circle (2pt);
			\fill[red] (0, 1)node["$c_2$\ \ " color = black, left]{} circle (2pt);
			\fill[red] (1, 1)node["$c_3$" color = black]{} circle (2pt);
			\fill (0, 0)node["$c_1\wedge c_2 = c_1\wedge c_3$" below]{} circle (2pt);
		\end{tikzpicture}\centering
		\vspace{10pt}
		\caption*{Antichain which is neither}\label{fig:nonBool}
	\end{minipage}
	\caption{Examples and non examples for key properties of antichains}\label{fig:antichains1}	
\end{figure}
To make this statement precise, let us introduce three conditions on $C$ as an antichain of $[\hat{0}, \alpha]$ for some $\alpha\in L$.
	\begin{description}
		\item[Inclusive antichain]For all subsets $S$ and $S'$ of  $C$, if $\wedge S \leq \wedge S'$ then $S'\subseteq S$.
		\item[Intersective antichain]For all subsets $S$ and $S'$ of  $C$, we have $(\wedge S) \vee (\wedge S') = \wedge(S\cap S')$.
		\item[Strong antichain]For all $S, S'$ subsets of $C$ of same cardinal, $\wedge S$ and $\wedge S'$ are incomparable \emph{i.e.} if $\wedge S \leq \wedge S'$ then $S = S'$.
		\item[Boolean antichain] C is both inclusive and intersective.
	\end{description}
Note that intersectivity depends on the choice of a top element $\alpha$ whereas inclusivity and strength do not. The meet and join operations, will be computed in the interval $[\hat{0}, \alpha]$. Note also the following lemma.
\begin{lem}\label{lem:InclusiveStrong}An antichain is {\bf inclusive} if and only if it is {\bf strong}.\end{lem}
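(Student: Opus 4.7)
The forward direction is essentially tautological: if $C$ is inclusive and $S, S'$ are subsets of $C$ of the same cardinality with $\wedge S \leq \wedge S'$, then $S' \subseteq S$ by inclusivity, and equality of cardinals upgrades this to $S = S'$. I would dispatch this in one line.

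For the converse, I would argue by contrapositive: assuming inclusivity fails, I produce two subsets of the same cardinality witnessing a failure of strength. Suppose $S, S' \subseteq C$ satisfy $\wedge S \leq \wedge S'$ but $S' \not\subseteq S$. Pick any $c \in S' \setminus S$. Since $c \geq \wedge S' \geq \wedge S$, the element $c$ lies above $\wedge S$. Assuming $S$ is nonempty (the case $S = \emptyset$ is a boundary case I would treat separately, using that $\wedge \emptyset = \alpha$ forces $S' = \emptyset$ if we exclude $\alpha \in C$), fix any $c_1 \in S$ and form
\[
S_1 \;:=\; (S \setminus \{c_1\}) \cup \{c\}.
\]
Then $|S_1| = |S|$, and $\wedge S_1 = c \wedge \wedge(S\setminus\{c_1\}) \geq \wedge S$ because both $c$ and $\wedge(S\setminus\{c_1\})$ are $\geq \wedge S$. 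Now strength applied to the pair $(S, S_1)$ would force $S = S_1$, contradicting $c \in S_1 \setminus S$. Hence strength fails as well, giving the contrapositive.

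The only subtle point, and the place I would be most careful, is the argument that $\wedge S_1 \geq \wedge S$: it uses only the lattice identity $\wedge(A \cup B) = (\wedge A) \wedge (\wedge B)$ together with the fact that meet is order reversing in the subset argument, so no hidden property of $C$ (beyond being a subset of the lattice) is invoked. I would also note explicitly that the statement depends on the ambient interval $[\hat 0, \alpha]$ only through the lattice operations, so the lemma is intrinsic to $C$ as advertised in the remark preceding it. With these two pieces in place the proof is short and formal, so I would keep the writeup to a few lines.
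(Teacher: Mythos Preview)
Your proof is correct. The forward direction matches the paper's one-line argument. For the converse, the paper proceeds directly rather than by contrapositive: given $\wedge S' \leq \wedge S$ with $|S'| = |S| + n > |S|$, it picks $n$ elements $s_1,\dots,s_n \in S' \setminus S$, adjoins them to $S$ to form $S''$ of cardinality $|S'|$, observes $\wedge S' \leq \wedge S''$, and applies strength to get $S' = S''$, hence $S \subseteq S'$; the remaining case $|S| > |S'|$ is then ruled out by a similar construction. Your single-swap construction $S_1 = (S\setminus\{c_1\})\cup\{c\}$ is a cleaner variant of the same underlying idea---both proofs manufacture a set of the right cardinality to which strength applies---and your contrapositive packaging avoids the case split on $|S|$ versus $|S'|$. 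The boundary case $S=\emptyset$ is indeed harmless for the reason you sketch.
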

\begin{proof}
The inclusivity condition implies the strong antichain condition by taking the subsets $S$ and $S'$ have the same cardinal. To see the converse, assume that the antichain  $C$ is a strong antichain and let $S$ and $S'$ be two subsets of $C$ such that $\wedge S' \leq \wedge S$. Suppose at first that $|S| +n = |S'|$ with $n > 0$. Then there exists $s_1, \dots, s_n \in S' \setminus S$. Set $S'' = S\sqcup \{s_1, \dots, s_n\}$.  Because the inequalities $\wedge S' \leq \wedge S $ and $\wedge S' \leq \wedge  \{s_1, \dots, s_n\}$ hold, we have 
\begin{equation*}
\wedge S' \leq (\wedge S)\wedge (\wedge\{s_1, \dots, s_n\}) = \wedge S''.
\end{equation*}
Because $|S'|=|S''|$, the antichain is strong, $S' = S''$ hence $S\subseteq S'$. Next if $|S| = |S'| + n$ with $n$ positive, then take $s_1, \dots, s_n$ in $S\setminus S'$. Then we have 
\[\wedge S'' := (\wedge S')\wedge (\wedge\{s_1, \dots, s_n\})\leq \wedge S\]
The antichain is strong so $S = S''$. Then $S' \subseteq S$ so $\wedge S' \geq \wedge S$ and thus $\wedge S' = \wedge S$. Using the first part of the proof we get $S = S'$. This contradicts the assumption on the integer $n$.
\end{proof}
\begin{rem}The strong antichain condition implies that for each $n$, the set \[\{\wedge S | S\subseteq C \text{ with } |S| = n\}\] is an antichain. This condition is strong enough for the main result (see Subsection \ref{sec:mainResult}). However it is not strong enough for  computing morphism sets.
\end{rem}
\begin{lem}\label{lem:Intersektstronk}Suppose $C$ is an intersective antichain below an element $\alpha\in L$ and suppose that it is not the singleton $\{\alpha\}$. Then $C$ is strong.
\end{lem}
\begin{proof} Suppose the antichain $C$ is intersective. If it contains only one element, then it is clear that it is strong. Suppose it contains at least two elements. Let $S$ and $S'$ be two subsets of the antichain $C$ such that $\wedge S \leq \wedge S'$. Let $x$ be an element of $S'$. Then  we have $x \geq \wedge S' \geq \wedge S$ and also $(\wedge S) \vee x = \wedge (S\cap \{x\})$. Because $x\not = \hat{1}$, it follows that $x\in S$ which concludes the argument. 
\end{proof}
\begin{cor}An antichain below $\alpha$ which is not $\{\alpha\}$ is boolean if and only if it is intersective.
\end{cor}

Denote $\langle C\rangle_{\vee, \wedge}^{\alpha}$ the lattice generated by the elements of $C$ and $\alpha$ in the sublattice $[0, \alpha]$ of $L$, equipped with the lattice operations of $L$. The following lemma motivates the terminology.
\begin{lem}\label{lem:BoolBool}An antichain is boolean if and only if the map 
\begin{align*}
(\mathcal{P}(C), \cap, \cup)&\xrightarrow{\phi} (\langle C\rangle_{\vee, \wedge}^{\alpha},  \wedge, \vee)\\
S\quad\quad &\mapsto \quad\quad \wedge S
\end{align*}
is a lattice anti-isomorphism.
\end{lem}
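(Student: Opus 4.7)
The two clauses in the definition of boolean (intersective, inclusive) should match exactly with the two requirements for $\phi$ to be a lattice anti-isomorphism: intersectivity will correspond to $\phi$ being a lattice anti-homomorphism, and inclusivity (equivalently, by Lemma \ref{lem:InclusiveStrong}, strength) will correspond to $\phi$ being bijective. Throughout, I use the convention $\wedge \emptyset = \alpha$, so that $\phi(\emptyset) = \alpha$, which is needed to hit $\alpha \in \langle C \rangle^{\alpha}_{\vee, \wedge}$.

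\textbf{Forward direction.} Suppose $C$ is boolean. For any $S, T \subseteq C$, the identity $\wedge(S \cup T) = (\wedge S) \wedge (\wedge T)$ is automatic by associativity of meet, so $\phi(S \cup T) = \phi(S) \wedge \phi(T)$. The equation $\phi(S \cap T) = \phi(S) \vee \phi(T)$ is precisely intersectivity. For surjectivity, note that the image $\phi(\mathcal{P}(C)) = \{\wedge S : S \subseteq C\}$ contains each element of $C$ (as $\phi(\{c\}) = c$) and contains $\alpha = \phi(\emptyset)$; by the two displayed identities it is closed under $\wedge$ and $\vee$, hence exhausts $\langle C \rangle^{\alpha}_{\vee, \wedge}$. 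For injectivity, if $\wedge S = \wedge S'$ then in particular $\wedge S \leq \wedge S'$ and $\wedge S' \leq \wedge S$; inclusivity yields $S' \subseteq S$ and $S \subseteq S'$, so $S = S'$. Finally, the fact that $\phi$ reverses order ($S \subseteq T \Rightarrow \wedge T \leq \wedge S$) together with bijectivity and the two identities above makes $\phi$ an anti-isomorphism of lattices.

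\textbf{Backward direction.} Assume $\phi$ is a lattice anti-isomorphism. Then for all $S, S' \subseteq C$,
\[
\wedge(S \cap S') \;=\; \phi(S \cap S') \;=\; \phi(S) \vee \phi(S') \;=\; (\wedge S) \vee (\wedge S'),
\]
which is intersectivity. For inclusivity, suppose $\wedge S \leq \wedge S'$. Then $(\wedge S) \vee (\wedge S') = \wedge S'$, and intersectivity gives $\wedge(S \cap S') = \wedge S'$. Injectivity of $\phi$ forces $S \cap S' = S'$, i.e.\ $S' \subseteq S$, which is inclusivity.

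\textbf{Expected difficulty.} The argument is essentially formal once the empty-set convention $\wedge\emptyset = \alpha$ is fixed and one observes that $\phi$ is order-reversing, so there is no substantial obstacle. The only point requiring a sentence of care is the surjectivity claim, which must be phrased as a closure argument: the set of meets of subsets of $C$ automatically contains $C \cup \{\alpha\}$ and, under the two identities derived from booleanity, is closed under both lattice operations, hence equals the sublattice they generate.
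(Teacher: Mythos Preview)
Your proof is correct and follows essentially the same approach as the paper's. The only cosmetic difference is in the backward direction for inclusivity: the paper uses the identity $\wedge S = (\wedge S)\wedge(\wedge S') = \wedge(S\cup S')$ together with bijectivity to conclude $S = S\cup S'$, whereas you use the dual identity $\wedge S' = (\wedge S)\vee(\wedge S') = \wedge(S\cap S')$ together with injectivity to conclude $S' = S\cap S'$; both are equally valid.
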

\begin{proof} 
Assume that the map $\phi$ is a lattice anti-isomorphism. Then $C_{\alpha}$ is intersective because $\phi$ sends $\cap$ to $\vee$. Conversely assume $C$ is both intersective and thus inclusive below $\alpha$. The fact that $\phi$ sends $\cup$ to $\wedge$ is true for any subset of a lattice. The intersection property makes $\phi$ send $\cap$ to $\vee$. To see that $\phi$ is injective, note that if $\wedge S = \wedge S'$ then the inclusion property forces $S = S'$ for any $S, S'\subseteq C$. Next notice that the image of $\phi$, $\Ima(\phi) = \{\wedge S | S\subseteq C\}$ is a lattice, using the properties we just exhibited. Moreover, any sublattice of $[\hat{0}, \alpha]$ containing $C$ contains $\Ima(\phi)$. It is thus the sublattice of $[\hat{0}, \alpha]$ generated by  $C$ and $\alpha$, \emph{i.e.}, $\langle C\rangle_{\vee, \wedge}^{\alpha} = \{\wedge S | S\subseteq C\}$ and $\phi$ is surjective.
\end{proof}
\subsection{Morphisms}
In this subsection we fix an antichain $C$ below an element $\alpha$ of a lattice $L$ as well as an interval $I = [a, b]$ of $L$.
If $C$ is \defn{boolean} more can be said about the total hom complex $\Hom^{\bullet}_{\IncAlg}(\PRes_C^{\alpha}, I)$. Recall the notation from equation (\ref{eq:totHomComp}). Note that 
\begin{equation}\label{eq:morphInt}
e_x\cdot I = \begin{cases}\field &\text{if } x\in I \\0&\text{otherwise}\end{cases}
\end{equation}
as a special case of equation (\ref{eq:morphIntS}). Denote by $E$ the set $\{S\subseteq C | \wedge S\in I\}$. A projective module $P_{\wedge S}$ appearing in the projective resolution $\PRes_{C}^{\alpha}$ can contribute to the total hom if and only if $S \in E$. Note that $C$ is finite hence its set of subsets is finite. Assume that $E$ is not empty otherwise the total hom complex is zero. If $C$ is intersective, and if $S$ and $S'$ are in $E$ then 
\[a\leq \wedge(S\cup S')\leq \wedge (S\cap S')\leq b\]
meaning that $S\cup S'$ and $S\cap S'$ are in $E$ as well. It follows that $E$ has a largest element $\cup_{S\in E} S = S_M$ as well as a least element $\cap_{S\in E} S = S_m$. Moreover, if $S \in [S_m, S_M]$ then \[a\leq \wedge S_M\leq \wedge S \leq \wedge S_m\leq b\] hence $S\in E$ and we have proved the following lemma.
\begin{lem}\label{lem:Eint} If $C$ is a boolean antichain then $E = [S_m, S_M]$. \end{lem}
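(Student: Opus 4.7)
The plan is to establish the two set inclusions $E\subseteq [S_m,S_M]$ and $[S_m,S_M]\subseteq E$ separately. The first inclusion is essentially tautological: by the very definitions $S_m=\bigcap_{S\in E}S$ and $S_M=\bigcup_{S\in E}S$, any $S\in E$ satisfies $S_m\subseteq S\subseteq S_M$, and there is nothing more to say.

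For the reverse inclusion, the key input is the order-reversing behaviour of the meet with respect to subset inclusion in the power set: if $T\subseteq T'$, then $\wedge T'\leq \wedge T$ in $L$, because taking a meet over additional elements can only shrink the result. Applied to any $S$ with $S_m\subseteq S\subseteq S_M$, this gives the chain $\wedge S_M \leq \wedge S \leq \wedge S_m$ in $L$. Since $S_m$ and $S_M$ belong to $E$ by the preceding paragraph, their meets lie in $I=[a,b]$, so the chain extends to $a\leq \wedge S_M\leq \wedge S\leq \wedge S_m\leq b$, which exhibits $\wedge S\in I$ and hence $S\in E$. This is the argument already sketched in the paragraph immediately preceding the statement, which I would simply formalise.

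What must be justified before running this argument is that $S_m$ and $S_M$ actually belong to $E$ (so that one can speak of the interval $[S_m,S_M]$ and know its endpoints are realised). This is where the booleanness of $C$ enters in an essential way: the intersectivity property is used in the preceding paragraph to show that $S,S'\in E$ implies $S\cup S',S\cap S'\in E$, via the identity $\wedge(S\cap S')=(\wedge S)\vee(\wedge S')$. Since $C$ is finite, $E$ is finite, so iterated pairwise closure under $\cup$ and $\cap$ produces $S_M,S_m\in E$.

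I do not anticipate any real obstacle: the whole content of the lemma is packaged in the two paragraphs immediately preceding it. The only thing worth flagging in the write-up is the role of intersectivity — without it $E$ might fail to be closed under $\cap$, and one would get only a join-semilattice rather than a genuine interval of the Boolean lattice $\mathcal P(C)$. Strength alone (equivalently, inclusivity) is not enough for this step.
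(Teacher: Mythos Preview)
Your proposal is correct and follows exactly the approach of the paper: the paragraph immediately preceding the lemma \emph{is} the proof, and you have accurately unpacked it, including the role of intersectivity in showing that $E$ is closed under $\cap$ (hence that $S_m\in E$). Nothing further is needed.
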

To describe the total hom complex, $\Hom^{\bullet}_{\IncAlg}(\PRes_C^{\alpha}, I)$, we write $m = |S_m|$ and $M = |S_M|$. For each degree $m\leq i\leq M$  there are exactly $\binom{M-m}{i - m}$ subsets $S$ of $C$ with cardinal $i$ in $E$. Hence, by equations ((\ref{eq:totHomComp} and \ref{eq:morphInt})) the complex $\Hom^{\bullet}_{\IncAlg}(\PRes_C^{\alpha}, I[0])$ has shape:
\begin{equation}\label{eq:simplicialComp}
0\leftarrow \field^1 \leftarrow \dots \leftarrow \field^{\binom{M-n}{j}} \leftarrow \dots \leftarrow \field\leftarrow 0.
\end{equation}
This is precisely the shape of the simplicial module associated to the standard simplex. It remains to describe that the boundary maps match the standard simplex as well. The map is post composition by the boundary map of $\PRes_C^{\alpha}$ in degree $i$. It sends a map defined by a vector $(f_S)_{|S|= i}$ to a vector $(g_{S'})_{|S'| = i+1}$ described by
\begin{equation*}
g_{S'} = \begin{cases} 
		0 &\text{if } S\not \subseteq S'\\ 
		(-1)^{\epsilon(S, x)}\cdot f_S &\text{otherwise,}\end{cases}
\end{equation*}
where $|x|_{S} = |\{y\in S' | y\leq x\}|$. Indexing the vector elements by their complements in $C$ and writing the basis vectors $e_J$ we get 
\begin{equation*}
e_J \mapsto \sum_{x\in J} (-1)^{|x|_{J^{c}}} e_{J-\{x\}}
\end{equation*}
Like in the proof of \cite[Theorem 2.2]{iyama2022distributive}, we have an isomorphism
\begin{equation}\label{eq:isoKoszulComplex}
\Hom^{\bullet}_{\IncAlg}(\PRes_C^{\alpha}, I)\cong (\field \xleftarrow{id} \field)^{\otimes (M-m)}[m].
\end{equation}
The left hand side is a Koszul complex over the base field $\field$. As a tensor product of acyclic complexes it is either acyclic or concentrated in one degree when the set $S_M\setminus S_m$ is empty using K\"unneth's formula \cite[Chapter 6.3]{Cartan1956Homological} or \cite[Exercise 1.2..5]{weibel1994introduction}.
\begin{prop}\label{prop:homInt} Let $C$ be an antichain of a lattice $L$ and let $I \subseteq L$ be an interval. Suppose the set $E = \{S\subseteq C | \wedge S\in I\}$ is an interval of the lattice $\PRes(C)$. Then there exists at most one integer $p$ such that $\Hom_{\HomotopyCat}(M_C, I[p])$ is non zero. When such an integer exists, the hom set is one dimensional.
\end{prop}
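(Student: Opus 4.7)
The plan is to invoke directly the computation of the total hom complex carried out in the paragraphs preceding the statement. By the isomorphism \eqref{eq:TotalHomHom}, $\Hom_{\HomotopyCat(\IncAlg)}(M_C, I[p])$ is the $p$-th cohomology of $\Hom^{\bullet}_{\IncAlg}(\PRes_C^{\alpha}, I)$, so it suffices to understand the cohomology of this total hom complex. The hypothesis that $E = \{S \subseteq C \mid \wedge S \in I\}$ is an interval $[S_m, S_M]$ of $\PRes(C)$ is exactly what is needed to apply the shape analysis \eqref{eq:simplicialComp} and the subsequent identification of boundary maps: indeed, nothing in that analysis used the full booleanity of $C$, only the fact extracted in Lemma \ref{lem:Eint}, which we are now taking as a hypothesis.

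Therefore the isomorphism \eqref{eq:isoKoszulComplex},
\[
\Hom^{\bullet}_{\IncAlg}(\PRes_C^{\alpha}, I)\;\cong\; (\field \xleftarrow{\;\mathrm{id}\;} \field)^{\otimes (M-m)}[m],
\]
applies verbatim with $m = |S_m|$ and $M = |S_M|$. The right-hand side is a tensor product over $\field$ of two-term complexes, each of which is either acyclic (if $M-m \geq 1$) or the trivial one-term complex $\field$ (if $M-m = 0$).

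The conclusion now splits into two cases via the K\"unneth formula. If $S_M \setminus S_m$ is non-empty, then at least one tensor factor $\field \xleftarrow{\mathrm{id}} \field$ is acyclic, so the entire tensor product is acyclic, and $\Hom_{\HomotopyCat(\IncAlg)}(M_C, I[p]) = 0$ for every $p$; in particular the statement holds vacuously. If instead $S_M = S_m$, then the empty tensor product is just $\field$ concentrated in degree $m$, whose cohomology is one-dimensional in degree $p = m$ and vanishes elsewhere. In either case, at most one $p$ yields a non-zero hom space, and when it exists it has dimension one.

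There is essentially no obstacle here: the real work was done before the proposition was stated, in establishing the shape of the complex and the form of the boundary maps. The only thing to verify is that the hypothesis ``$E$ is an interval of $\PRes(C)$'' is precisely the combinatorial input consumed by that computation, which is immediate from inspection of \eqref{eq:simplicialComp}.
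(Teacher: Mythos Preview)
Your proof is correct and follows essentially the same approach as the paper's own proof, which consists of a single sentence invoking the preceding computations together with \eqref{eq:TotalHomHom}. You are in fact more careful than the paper in explaining why the hypothesis ``$E$ is an interval of $\PRes(C)$'' suffices: namely, that the only place booleanity was used in the preceding discussion was to establish Lemma~\ref{lem:Eint}, whose conclusion you are now taking as a hypothesis.
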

\begin{proof} Given the previous calculations and remarks, the result follows from equation (\ref{eq:TotalHomHom}).
\end{proof}
 Moreover we know exactly that such a degree exists if and only if the set $E$ is a singleton \emph{i.e.} there exists a unique $S\subseteq C$ such that $\wedge S \in I$. In this case $p = |S|$. 
\begin{thm}\label{lem:homsBool}
 Let $C$ be a \defn{boolean} antichain of a lattice $L$ \ifBelowAlpha below an element $\alpha\in L$\fi. Let $I \subseteq L$ be an interval. There exists at most one integer $p$ such that $\Hom_{D^b}(M_C, I[p])$ is non zero. When such an integer exists, the hom space is one dimensional.
\end{thm}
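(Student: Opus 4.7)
The plan is to show that the theorem follows almost immediately from the machinery built up in the preceding pages, with booleanity being exactly the hypothesis that unlocks the key structural fact about the set $E = \{S \subseteq C \mid \wedge S \in I\}$.

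First I would reduce the claim to a computation in the homotopy category. Since $C$ is an antichain, the module $M_C$ (or $M_C^\alpha$ if we work below some $\alpha$) has the explicit projective resolution $\PRes_C$ recalled in the preliminaries. In particular $M_C$ is quasi-isomorphic to a bounded complex of projectives, so by equation (\ref{eq:TotalHomHom}) the isomorphism $u$ identifies $\Hom_{D^b}(M_C, I[p])$ with $\HH^p\bigl(\Hom^\bullet_{\IncAlg}(\PRes_C, I)\bigr)$. Therefore it suffices to show that this total hom complex has cohomology concentrated in at most one degree, and that the cohomology there is one-dimensional.

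Next I would invoke Proposition \ref{prop:homInt}, which gives precisely this conclusion under the assumption that $E = \{S \subseteq C \mid \wedge S \in I\}$ is an interval of the powerset lattice $\mathcal{P}(C)$. The whole point of the boolean hypothesis is to supply this interval property: booleanity includes intersectivity, and the short argument preceding Lemma \ref{lem:Eint} shows that intersectivity alone, together with $I = [a,b]$ being an interval, forces $E$ to be closed under $\cup$ and $\cap$. Since $E$ is finite, it then has a maximum $S_M$ and minimum $S_m$, and Lemma \ref{lem:Eint} gives $E = [S_m, S_M]$. Plugging this into Proposition \ref{prop:homInt} yields the theorem, with the unique non-vanishing degree being $p = |S_m| = \dots = |S_M|$ exactly when $S_m = S_M$, and in all other cases the Koszul-type tensor product (\ref{eq:isoKoszulComplex}) is acyclic.

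There is essentially no obstacle: the real work was done in Proposition \ref{prop:homInt} and Lemma \ref{lem:Eint}, and the role of this theorem is just to repackage those results under the clean hypothesis that $C$ is boolean. I note that the inclusivity half of booleanity is not strictly needed for this statement — intersectivity suffices — but the combined notion is what is used later when dealing with morphisms out of antichain modules systematically, and stating the theorem under the boolean hypothesis is the form needed in subsequent sections.
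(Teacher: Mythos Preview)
Your proposal is correct and matches the paper's own proof exactly: the paper simply states that the theorem follows by combining Proposition \ref{prop:homInt}, Lemma \ref{lem:Eint}, and the isomorphism $u$ of equation (\ref{eq:TotalHomHom}). Your additional remark that only intersectivity is actually needed for this step is accurate and worth noting.
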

\begin{proof}
This follows from Proposition \ref{prop:homInt} combined with Lemma \ref{lem:Eint} and the isomorphism $u$ of equation (\ref{eq:TotalHomHom}).
\end{proof}
\subsection{Truncations}
The stupid truncation \cite[Remark 3.5.22]{zimmermann2014representation}  $\sigma_{\geq i} R$ of a complexe $R = ((R_n)_{n\in \mathbb{Z}}, (\partial_n)_{n\in\mathbb{Z}})$ is defined by 
\begin{equation}
(\sigma_{\geq i} R)_n = \begin{cases} R_n &\text{ if } n \geq i\\ 0&\text{otherwise}
					\end{cases}
\text{ with boundaries }
\partial'_n = \begin{cases} \partial_n &\text{ if} n > i\\ 0&\text{ otherwise. }
					\end{cases}
\end{equation}

In this subsection we fix a \defn{strong} antichain $C$. To make notation lighter we do not say if it is below some $\alpha$, though the two lemmas below hold in both $[\hat{0}, \alpha]$ and $L$.
\begin{lem} \label{lem:HoCatHom_Bool1}
With $C$ as above and notation from the previous subsections, for all $r = |C| \geq i\geq 0$ there is a bijection
 \begin{align*}
\Phi : \End_{\IncAlg} ((\PRes_C)_{i-1}) &\to \Hom_{\HomotopyCat}(\sigma_{\geq i}\PRes_C, (\PRes_C)_{i-1}[i])\\
 f\quad\quad & \mapsto \quad\quad f\circ \partial_i [i]. 
 \end{align*}
\end{lem}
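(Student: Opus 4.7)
The first move is to unwrap the right hand side. Since $\sigma_{\geq i}\PRes_C$ vanishes in degrees below $i$ and $(\PRes_C)_{i-1}[i]$ is concentrated in degree $i$, the cochain complex $\Hom^{\bullet}_{\IncAlg}(\sigma_{\geq i}\PRes_C,(\PRes_C)_{i-1})$ is zero below degree $i$, so by (\ref{eq:TotalHomHom})
\[
\Hom_{\HomotopyCat}(\sigma_{\geq i}\PRes_C,(\PRes_C)_{i-1}[i]) \;=\; \ker\!\bigl(\partial_{i+1}^{*}\colon \Hom((\PRes_C)_i,(\PRes_C)_{i-1})\to\Hom((\PRes_C)_{i+1},(\PRes_C)_{i-1})\bigr),
\]
and $\Phi(f)=f\circ\partial_i$ lies in this kernel because $\partial_i\partial_{i+1}=0$.

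Next I would decompose the target by the direct summands of $(\PRes_C)_{i-1}=\bigoplus_{|T|=i-1}P_{\wedge T}$. Lemma \ref{lem:InclusiveStrong} says that strength is equivalent to inclusivity; in particular distinct meets $\wedge T$ of equal height are incomparable, so $\Hom(P_{\wedge T},P_{\wedge T'})=0$ for $T\neq T'$ with $|T|=|T'|=i-1$, and $\End((\PRes_C)_{i-1})=\bigoplus_T\field$. The map $\Phi$ splits accordingly as a direct sum of linear maps $\Phi_T\colon \field\to\ker(\partial_{i+1}^{*})_T$ between the $T$-components of source and target, so it suffices to prove each $\Phi_T$ is bijective.

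Fix $T$. Strength also gives $\Hom(P_{\wedge S},P_{\wedge T})\cong\field$ exactly when $T\subseteq S$, so, term by term, $\Hom^{\bullet}(\PRes_C,P_{\wedge T})$ has dimension $\binom{r-i+1}{l-i+1}$ in degree $l\in\{i-1,\dots,r\}$; inspecting the differentials prescribed by (\ref{eq:boundComp}) identifies it with the augmented simplicial cochain complex of the standard $(r-i)$-simplex on the vertex set $C\setminus T$, shifted so that the augmentation term sits in degree $i-1$. Truncating below degree $i$ removes the augmentation and leaves the ordinary cochain complex of a contractible simplex, whose cohomology is $\field$ in the bottom degree and zero elsewhere. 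Hence $\ker(\partial_{i+1}^{*})_T$ is one-dimensional. Finally $\Phi_T(\mathrm{id}_{P_{\wedge T}})$ is visibly nonzero: for any $S\supsetneq T$ with $|S|=i$ (which exists since $|T|<r$) its $(\wedge S)$-component is a signed natural inclusion $\iota_{\wedge S}^{\wedge T}$. Thus $\Phi_T$ is a nonzero linear map between one-dimensional spaces, hence an isomorphism, and summing over $T$ finishes the proof.

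The only real difficulty is the identification of the hom complex with the simplicial cochain complex of a simplex; this is where the strong-antichain hypothesis is essential, for without it $\Hom(P_{\wedge S},P_{\wedge T})$ need not be governed purely by the inclusion $T\subseteq S$ and the simplex structure collapses.
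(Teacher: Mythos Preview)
Your proof is correct and follows essentially the same route as the paper's: both reduce to the summands $P_{\wedge T}$ using the strong-antichain hypothesis, then exploit the simplex/Koszul structure of $\Hom^{\bullet}(\PRes_C,P_{\wedge T})$ to see that the target of each $\Phi_T$ is one-dimensional. The only cosmetic difference is that the paper phrases the dimension count as ``the full hom complex is acyclic, hence every chain map $\PRes_C\to P_{\wedge T}[i]$ is null-homotopic, hence in the image of $\Phi_T$'' (invoking Proposition~\ref{prop:homInt}), whereas you truncate first and read off $H^{i}=\field$ directly from the unaugmented simplex; these are two formulations of the same acyclicity fact. You should also note the trivial boundary case $i=0$ (both sides vanish since $(\PRes_C)_{-1}=0$), which the paper disposes of in one line.
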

 \begin{proof}
If $i = 0$, $\PRes_{i-1} = 0$. So we assume $r \geq i > 0$. The antichain is strong so the indices of the indecomposable summands of $(\PRes_C)_{i-1} = \bigoplus_{|S| = i-1} P_{\wedge S}$ cannot be compared. Thus the endomorphisms of this module decompose as 
\begin{equation}\label{eq:endP}f = \bigoplus_{|S| = i-1} \lambda_S\cdot id_{P_{\wedge S}}.\end{equation}
By projecting on the summands of the target, it suffices to show that there is a bijection 
\begin{align*}
\Phi : \End (P) &\to \Hom_{\HomotopyCat}(\sigma_{\geq i}\PRes_C,P[i])  \\
 f\quad\quad & \mapsto \quad\quad f\circ \pi_P\circ \partial_i [i] 
  \end{align*}
for all $P = P_{S}$ with $S\subseteq C$ of cardinal $i-1$. Write $S = \{s_1, \dots, s_{i-1}\}$.
 The morphisms on the right hand side are of the form:
\begin{equation}\label{diag:trunc1}
\begin{tikzcd}
\bigoplus_{|S'| = i+1}P_{\wedge S'}\ar[r, "\partial_{i+1}"]\ar[d, "\dots\quad\quad"']& \bigoplus_{|S'| = i}P_{\wedge S'}\ar[r]		\ar[d, "\phi"]\ar[dl, dashed, red, "0"]&0\ar[dl, dashed, red, "0"]\\
	0\ar[r]& P\ar[r]&0
\end{tikzcd}
\end{equation}
The red arrows represent potential homotopy maps. There cannot be a non zero homotopy so
\[ \Hom_{\HomotopyCat}(\sigma_{\geq i}R,P[i]) \cong  \Hom_{C}(\sigma_{\geq i}R,P[i]).\]
The relation $\partial^2 = 0 $ implies that $\Phi$ is well defined. An element of $\End(P)$ is either $0$ or an automorphism. Moreover, considering the specific form of the boundary maps of the complex $\PRes_C$ in equation (\ref{eq:boundComp}), the projection of $\partial_i$ on the factor $P$ of its target is non zero. Hence if $f$ is non zero then $\Phi(f)$ is non zero and $\Phi$ is injective. It remains to see that it is surjective.  

Notice that there is an isomorphism between $\Hom_{C}(\PRes_{C}, P[i])$ and  $\Hom_{C}(\sigma_{\geq i}\PRes_{C}, P[i])$ as any map $\phi$ as in equation (\ref{diag:trunc1}) also yields a map from the untruncated complex to $P[i]$ and vice versa. The map $\Phi_S$ is surjective if and only if every such map factors through $\partial_{i}$ \emph{i.e.} if and only if every element of  $\Hom_{C}(\PRes_{C}, P[i])$ is zero homotopic. Because the antichain is strong we have
\[E = \{S'\subset C| \wedge S'\leq \wedge S\} = \{S'\subseteq C | S \subseteq S' \}\]
where $E$ is the set of contributing subsets of $C$ in the total hom. The assumptions on $i$ and the cardinal of $C$ ensures that $E$ contains at least two elements. By Proposition \ref{prop:homInt} we have
\[\Hom_{\HomotopyCat}(\PRes, P_{\wedge S}[i]) \cong 0.\]This concludes the proof.
 \end{proof}
\begin{lem}\label{lem:HoCatHom_Bool2}
Let $C$ be a strong antichain. Then 
\begin{equation*}
\dim \Hom_{\HomotopyCat}(\PRes_C, \sigma_{\geq 1} \PRes_C) \leq 1.
\end{equation*}
\end{lem}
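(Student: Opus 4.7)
My plan is to show that the bound already holds at the level of chain complex morphisms; since passing to the homotopy quotient can only decrease the dimension, this suffices. First I would unpack a chain map $f : \PRes_C \to \sigma_{\geq 1}\PRes_C$. Its degree-zero component $f_0$ vanishes (the target is zero there), so $f$ is given by endomorphisms $f_i \in \End((\PRes_C)_i)$ for $1 \leq i \leq r = |C|$. The commutation condition at level $i = 1$ is automatic because both $f_0$ and the truncated boundary $\partial_1^{\sigma}: (\sigma_{\geq 1}\PRes_C)_1 \to 0$ vanish, so the only nontrivial constraints are the usual $f_{i-1} \partial_i = \partial_i f_i$ for $2 \leq i \leq r$.

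Next, because $C$ is strong, exactly as in equation (\ref{eq:endP}) from the proof of Lemma \ref{lem:HoCatHom_Bool1}, each $f_i$ decomposes as $f_i = \bigoplus_{|S|=i} \lambda_{S, i} \cdot \mathrm{id}_{P_{\wedge S}}$ with $\lambda_{S, i} \in \field$. Restricting the compatibility relation to the component $P_{\wedge S} \to P_{\wedge T}$ corresponding to an inclusion $T \subsetneq S$ with $|T| = |S| - 1$, the signs prescribed by equation (\ref{eq:boundComp}) appear identically on both sides and cancel, leaving the scalar identity $\lambda_{T, i - 1} = \lambda_{S, i}$.

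Finally I would iterate. The graph on nonempty subsets of $C$ whose edges join pairs $T \subsetneq S$ with $|S| = |T| + 1$ is connected (any two nonempty subsets are linked by a path that alternately adds and removes one element), so all scalars $\lambda_{S, i}$ coincide with a single value $\lambda \in \field$. Hence the space of chain maps is at most one-dimensional, and the same bound holds in $\HomotopyCat(\IncAlg)$. The main point to treat carefully is the sign bookkeeping in the middle step; the degenerate cases $r = 0$ (where $\sigma_{\geq 1}\PRes_C = 0$) and $r = 1$ (where there is only one scalar to begin with) pose no issue.
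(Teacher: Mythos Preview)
Your argument is correct and follows essentially the same route as the paper's proof: both reduce to chain maps, use the strong hypothesis to diagonalise each $f_i$, and then use the commutation relations $f_{i-1}\partial_i=\partial_i f_i$ to force all scalars to coincide. The only cosmetic differences are that the paper also checks explicitly that no nonzero homotopies exist (whereas you simply observe that quotienting by homotopy cannot increase the dimension, which suffices for the stated bound) and that the paper runs a downward induction from $k=r$ instead of invoking connectivity of the graph on nonempty subsets of $C$; these are equivalent ways of traversing the same relations.
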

\begin{proof}If $r = |C| = 0$, then $\sigma_{\geq 1}\PRes_C = 0$ and the space of morphisms in question is 0 dimensional. Assume that $r$ is strictly bigger than 0. The setting of the lemma can be illustrated by the following diagram
\begin{equation*}
\begin{tikzcd}
\dots\ar[r] & \bigoplus_{s, t\in C} P_{s\wedge t}\ar[d]\ar[r]& \bigoplus_{s\in C} P_S\ar[d]\ar[r]\ar[dl, dashed, red, "0"]  & P_{\hat{1}}\ar[d]\ar[dl, dashed, red, "0"] \\
\dots\ar[r] & \bigoplus_{s, t\in C} P_{s\wedge t}\ar[r] &\bigoplus_{s\in C} P_S\ar[r] & 0
\end{tikzcd}
\end{equation*}
The antichain $C$ is inclusive so the projective indecomposables in degree $i$ are associated to elements which are either bigger than the ones in degree $i+1$ or cannot be compared with them. Hence there are no non zero maps of degree $1$. We thus need to describe the maps between $\PRes_{C}$ and $\sigma_{\geq 1}\PRes_C$ in the category of complexes. For $k\in \llbracket 1, r \rrbracket$ both complexes have the same components
so morphisms of complexes are determined by morphisms of modules 
\[\phi_k \in \End\big( \bigoplus_{\substack{S\subseteq C,\\ |S| = k}}P_{\wedge S}\big)\]
satisfying the relation 
\begin{equation}\label{eq:relMorph}
\phi_k\circ\partial_{k+1} = \partial_{k+1}\circ\phi_{k+1}.
\end{equation}
Since $C$ is a strong antichain, the elements $\wedge S$ with a fixed cardinal cannot be compared. Hence an  endomorphism $\phi_k$ of this module is of the form
\begin{equation}\label{eq:morphDecomp}
\phi_k = \bigoplus_{|S| = k} \lambda_S\cdot id_{P_{\wedge S}}
\end{equation}
with $\lambda_S\in\field$. The goal is to show that for $1\leq k\leq r$ we have $\phi_k = \lambda_{C}\cdot id_k$. In other words, for all $S$, $S'$ subsets of $C$, $\lambda_S = \lambda_{S'}$. We proceed by downward induction on $k$ starting with $k = r$. In that case, we already have $\phi_r = \lambda_C id_{P_{\wedge C}}$ as there is only one projective indecomposable summand in degree $r$. If $r = 1$, we are done. Now take $1 \leq k <r$ and assume $\phi_{k+1} = \lambda_C\cdot id_{k+1}$. We now put together equations (\ref{eq:boundComp}), (\ref{eq:relMorph}) and (\ref{eq:morphDecomp}). On the one hand we have
\begin{equation*}\partial_{k+1}\circ \phi_{k+1} = \lambda^r_C \cdot \partial_{k+1}\end{equation*}
Evaluating at $e_{\wedge S}\in P_{\wedge S} \subseteq \bigoplus_{|S| = k+1} P_{\wedge S}$.
\begin{equation}
	\partial_{k+1}\circ\phi_{k+1}(e_{\wedge S}) = \lambda_C\cdot \sum_{s\in S} (-1)^{|s|_S}\cdot e_{\wedge (S\setminus \{i\})}.
\end{equation}
On the other hand we have 
\[\phi_k\circ\partial_{k+1}(e_{\wedge S}) = \sum_{s\in S} (-1)^{|s|_S}\lambda_{S\setminus\{s\}}\cdot e_{\wedge (S\setminus\{s\})}. \]
Because the $e_{\wedge (S\setminus \{s\})}$ are linearly independent we get
\[\lambda_{(S\setminus \{s\})} = \lambda_C\]
for all $S\subset C$ of cardinal $k+1$ and $s\in S$. Noticing that any subset of $C$ of cardinal $k < r$ can be expressed as $S\setminus\{s\}$ for some $S$ and some $s$, this finishes the proof.
\end{proof}
\subsection{Main result}\label{sec:mainResult}

\begin{thm}\label{claim:Gen} 
Let $L$ be a finite lattice, let $d$ and $l$ be integers. Suppose there exists a family of antichains $(C_{\alpha})_{\alpha\in L}$ of L such that for all $\alpha\in L$, the following assumptions hold.
\begin{enumerate}
\item \label{ass:1}The antichain $C_{\alpha}$ is below $\alpha$.
\item \label{ass:2}The module $M^{\alpha}_C$ is non zero and there is an isomorphism
	\begin{equation}\label{eq:CY}
	\Serre^l M_{C_{\alpha}}^{\alpha}\cong M_{C_{\alpha}}^{\alpha}[d]
	\end{equation}in $D^b(\mathcal{A})$.
\item \label{ass:3}The antichain $C_{\alpha}$ is \defn{strong}.
\end{enumerate}
Then $D^b(\IncAlg)$ is $\frac{d}{l}$-Calabi--Yau.
\end{thm}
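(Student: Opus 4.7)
The plan is to apply the Herschend--Iyama criterion stated in the introduction. Since a finite lattice has both $\hat 0$ and $\hat 1$, \cite[Theorem~3.1]{rognerud2018bounded} reduces the problem to the twisted Calabi--Yau property, and Herschend--Iyama in turn reduces this to the existence of an isomorphism $\Serre^l \IncAlg \cong \IncAlg[d]$ in $D^b(\IncAlg)$. Because $\IncAlg = \bigoplus_{\alpha \in L} P_\alpha$, it suffices to produce $\Serre^l P_\alpha \cong P_\alpha[d]$ for each $\alpha$. I would proceed by strong induction on $\alpha$ with respect to the lattice order. For the base case $\alpha = \hat{0}$, the projective $P_{\hat{0}}$ is simple, so the only antichain $C_{\hat{0}}$ consistent with Assumption (\ref{ass:2}) is $C_{\hat{0}} = \emptyset$, giving $M^{\hat{0}}_{C_{\hat{0}}} = P_{\hat{0}}$ and hence $\Serre^l P_{\hat{0}} \cong P_{\hat{0}}[d]$ directly from (\ref{ass:2}).

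For the inductive step, fix $\alpha$ and assume $\Serre^l P_\beta \cong P_\beta[d]$ for every $\beta < \alpha$; if $C_\alpha = \emptyset$ the result follows directly from Assumption (\ref{ass:2}), so we may assume $C_\alpha$ non-empty. Let $\PRes := \PRes^\alpha_{C_\alpha}$, which is quasi-isomorphic to $M^\alpha_{C_\alpha}$ and has $P_\alpha$ as its degree zero component. The inclusion of $P_\alpha$ as the degree zero subcomplex, with quotient $\tau := \sigma_{\geq 1}\PRes$, yields a distinguished triangle
$$P_\alpha \to M^\alpha_{C_\alpha} \to \tau \to P_\alpha[1]$$
in $D^b(\IncAlg)$. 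Assumption (\ref{ass:2}) supplies $\Serre^l M^\alpha_{C_\alpha} \cong M^\alpha_{C_\alpha}[d]$, so if I can also construct $\Serre^l \tau \cong \tau[d]$ compatibly, the axiom \textbf{TR3} together with the 2-out-of-3 Lemma \ref{lem:2out3} will yield $\Serre^l P_\alpha \cong P_\alpha[d]$.

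To build the iso on $\tau$, I would use a nested inner induction along the descending filtration $\tau = \tau_1 \supset \tau_2 \supset \dots \supset \tau_r$ with $\tau_i := \sigma_{\geq i}\PRes$. The deepest piece $\tau_r$ is a shift of the single projective $P_{\wedge C_\alpha}$ with $\wedge C_\alpha < \alpha$, so the outer induction already handles this base. For each $1 \leq i < r$ there is a distinguished triangle
$$(\PRes)_i[i] \to \tau_i \to \tau_{i+1} \to (\PRes)_i[i+1]$$
coming from the short exact sequence of complexes $0 \to (\PRes)_i[i] \to \tau_i \to \tau_{i+1} \to 0$, where $(\PRes)_i = \bigoplus_{|S|=i} P_{\wedge S}$ is a direct sum of projectives indexed by elements strictly below $\alpha$. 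The outer induction gives a componentwise iso $\Serre^l (\PRes)_i \cong (\PRes)_i[d]$, and inductively $\Serre^l \tau_{i+1} \cong \tau_{i+1}[d]$; a further application of \textbf{TR3} and 2-out-of-3 should then promote these to $\Serre^l \tau_i \cong \tau_i[d]$, provided the connecting squares can be made to commute.

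The heart of the argument, and the reason Assumption (\ref{ass:3}) is essential, is the coherent choice of scalars forcing these connecting squares to commute throughout the two nested inductions. Here strength of $C_\alpha$ enters decisively: by equation (\ref{eq:endP}) each $\End\big((\PRes)_i\big)$ splits as $\bigoplus_{|S|=i} \field\cdot id_{P_{\wedge S}}$, so every componentwise iso can be rescaled on each summand independently. Lemma \ref{lem:HoCatHom_Bool1} then identifies $\Hom_{\HomotopyCat}(\tau_{i+1},(\PRes)_i[i+1])$ as one-dimensional, generated by the restriction of the boundary of $\PRes$, so any two candidate connecting maps differ only by a single scalar that may be absorbed by these componentwise rescalings. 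Lemma \ref{lem:HoCatHom_Bool2} plays the complementary rigidity role of guaranteeing that the local scalar choices glue consistently as the inner induction climbs from $\tau_r$ back up to $\tau_1$, producing an iso $\Serre^l \tau \cong \tau[d]$ truly compatible with the outer triangle. This bookkeeping of scalars across the two nested inductions is the main technical work of the proof.
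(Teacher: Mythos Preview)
Your overall architecture matches the paper's proof: reduce via Theorem~\ref{thm:baptiste} to isomorphisms on projectives, run an outer strong induction on $\alpha$ using the truncation triangle $P_\alpha \to R \to \sigma_{\geq 1}R \to P_\alpha[1]$, and an inner downward induction on the $\tau_i = \sigma_{\geq i}R$, applying \textbf{TR3} and Lemma~\ref{lem:2out3} at each stage. (The invocation of Herschend--Iyama is harmless but unnecessary: Theorem~\ref{thm:baptiste} already passes directly from the isomorphisms on projectives to the full fractional Calabi--Yau property.)

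There is, however, a genuine gap in your commuting-square step, and you have interchanged the roles of the two lemmas. Lemma~\ref{lem:HoCatHom_Bool1} does \emph{not} assert that $\Hom_{\HomotopyCat}(\tau_{i+1}, (\PRes)_i[i+1])$ is one-dimensional; it identifies this space with $\End\big((\PRes)_i\big) \cong \bigoplus_{|S|=i}\field$, which has dimension $\binom{r}{i}$. Thus the connecting map $\partial$ and its conjugate $g^{-1}\circ F(\partial)\circ f$ differ by an endomorphism $T = \bigoplus_S \lambda_S\, id_{P_{\wedge S}}$, and absorbing $T$ into $g$ requires that \emph{every} $\lambda_S$ be nonzero. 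This invertibility of $T$ is precisely the work you omit: in the paper it is obtained by the diagram chase~(\ref{tikzcd:3.5squares}), projecting onto each summand $P_{\wedge S}$, using the outer hypothesis to adjust the isomorphism $h$ on that single summand so that the right-hand square commutes (possible because \emph{that} Hom space is one-dimensional by strength of $C_\alpha$), and then reading off $\lambda_S \neq 0$ from the nonvanishing of $\pi_S\circ\partial$ together with $F$ being an equivalence. Lemma~\ref{lem:HoCatHom_Bool2}, by contrast, is not a gluing device inside the inner induction; it enters exactly once, in the final outer step, to handle the square involving $p: R \to \sigma_{\geq 1}R$: there $\Hom(R, \sigma_{\geq 1}R)$ genuinely is at most one-dimensional, so $g^{-1}\circ F(p)\circ f = \lambda\, p$ for a single nonzero scalar and rescaling $g$ finishes.
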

In practice, assumption (2) is the hardest to investigate. The proof relies on the following theorem.
\begin{thm}[\protect{\cite[Theorem~3.1]{rognerud2018bounded}\label{thm:baptiste}}]Let $X$ be a finite poset with a unique minimal or unique maximal element. If there are integers $d$ and $l$ such that $\Serre^l(P)\simeq P[d]$ for all projective indecomposable modules of $X$, then the category $D^b(\IncAlg_{\field}(X))$ is $\frac{d}{l}$\emph{-fractionally Calabi--Yau}.
\end{thm}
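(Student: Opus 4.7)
The plan is to upgrade the given object-level isomorphisms $\Serre^l(P_\alpha)\cong P_\alpha[d]$ into a natural isomorphism of triangulated functors $\Serre^l\cong [d]$ on $D^b(\IncAlg_\field(X))$, exploiting the unique extremal element to rigidify the normalizations. Without loss of generality I would assume $X$ has a unique minimum $\hat{0}$ (the unique-maximum case is dual via $X\mapsto X^{\mathrm{op}}$). The essential feature is that $\hat{0}\leq\alpha$ for every $\alpha\in X$, so the canonical inclusion $\iota_{\hat{0}}^\alpha:P_{\hat{0}}\hookrightarrow P_\alpha$ exists and spans the one-dimensional space $\Hom_{\IncAlg}(P_{\hat{0}},P_\alpha)\cong\field$.

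First I would fix once and for all an isomorphism $f_{\hat{0}}:\Serre^l(P_{\hat{0}})\xrightarrow{\sim} P_{\hat{0}}[d]$ and use $P_{\hat{0}}$ as a universal source to pin the remaining $f_\alpha$. For each $\alpha\in X$, both $\Hom_{D^b}(\Serre^l(P_\alpha),P_\alpha[d])$ and $\Hom_{D^b}(\Serre^l(P_{\hat{0}}),P_\alpha[d])$ are one-dimensional: via the given isomorphisms they identify respectively with $\End(P_\alpha)\cong\field$ and $\Hom(P_{\hat{0}},P_\alpha)\cong\field$. Pre-composition with the nonzero map $\Serre^l(\iota_{\hat{0}}^\alpha)$ is a linear map between these one-dimensional spaces, and it is nonzero because applying it to any fixed isomorphism $\Serre^l(P_\alpha)\to P_\alpha[d]$ produces a nonzero morphism (the image of a nonzero map under the equivalence $\Serre^l$, post-composed with an iso). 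Hence there exists a unique $f_\alpha:\Serre^l(P_\alpha)\to P_\alpha[d]$ with
\[
f_\alpha\circ\Serre^l(\iota_{\hat{0}}^\alpha)=\iota_{\hat{0}}^\alpha[d]\circ f_{\hat{0}},
\]
and the same argument forces $f_\alpha$ to be an isomorphism.

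Next I would verify that the collection $(f_\alpha)_{\alpha\in X}$ is natural on the subcategory of projective indecomposables, i.e., for every $\alpha\leq\gamma$,
\[
f_\gamma\circ\Serre^l(\iota_\alpha^\gamma)=\iota_\alpha^\gamma[d]\circ f_\alpha.
\]
Both sides lie in the one-dimensional space $\Hom_{D^b}(\Serre^l(P_\alpha),P_\gamma[d])\cong\Hom(P_\alpha,P_\gamma)\cong\field$. Pre-composing each with $\Serre^l(\iota_{\hat{0}}^\alpha)$, applying functoriality of $\Serre^l$ and $[d]$ together with the compositional identity $\iota_\alpha^\gamma\circ\iota_{\hat{0}}^\alpha=\iota_{\hat{0}}^\gamma$, and then using the defining equations of $f_\alpha$ and $f_\gamma$, both sides reduce to $\iota_{\hat{0}}^\gamma[d]\circ f_{\hat{0}}$. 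Since pre-composition with $\Serre^l(\iota_{\hat{0}}^\alpha)$ is injective between one-dimensional spaces, the equality follows. Because the $\field$-linear category of projective indecomposables is generated by these inclusions, additivity yields a natural isomorphism $\Serre^l\Rightarrow [d]$ on $\mathrm{add}(\bigoplus_\alpha P_\alpha)$.

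Finally I would extend this natural isomorphism from projectives to all of $D^b(\IncAlg_\field(X))$. Finiteness of $X$ implies $\IncAlg_\field(X)$ has finite global dimension, so the inclusion $K^b(\mathrm{proj})\hookrightarrow D^b(\IncAlg_\field(X))$ is a triangulated equivalence; a natural isomorphism of additive functors between projectives extends componentwise to bounded complexes of projectives, and compatibility with cones (guaranteed because both $\Serre^l$ and $[d]$ are triangulated equivalences) makes the extension a natural isomorphism of triangulated functors. The main obstacle throughout is the coherence step: producing the $f_\alpha$'s so that they respect every morphism of projectives simultaneously, rather than only existing pointwise. The unique-extremum hypothesis is what resolves it, because $P_{\hat{0}}$ admits a distinguished nonzero map into every other projective, providing the rigid reference from which all other $f_\alpha$'s are forced.
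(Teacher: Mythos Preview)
The paper does not supply its own proof of this theorem: it is quoted verbatim from \cite{rognerud2018bounded} and used as a black box in the proof of Theorem~\ref{claim:Gen}. So there is no in-paper argument to compare against.

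Your proof is correct and is essentially the argument one finds in the cited reference. The only point that deserves a word of care is the injectivity claim in the naturality step: you assert that pre-composition with $\Serre^l(\iota_{\hat{0}}^\alpha)$ is injective on $\Hom_{D^b}(\Serre^l(P_\alpha),P_\gamma[d])$. This is true, but it is not automatic that a nonzero map composed with a nonzero map is nonzero. The reason it holds here is that, after transporting along the given isomorphisms $\Serre^l(P_\bullet)\cong P_\bullet[d]$, the composition in question becomes a nonzero scalar multiple of $\iota_\alpha^\gamma\circ\iota_{\hat{0}}^\alpha=\iota_{\hat{0}}^\gamma$, which is nonzero in the incidence algebra. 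You use this implicitly; it would be worth making it explicit. Similarly, in Step~5 you should note that $\Serre$ restricts to an autoequivalence of $K^b(\mathrm{proj})$ and acts termwise on complexes of projectives, so the componentwise extension of $(f_\alpha)$ really is a natural isomorphism of functors on $K^b(\mathrm{proj})\simeq D^b(\IncAlg_\field(X))$.
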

We also use the axioms \textbf{TR2} and \textbf{TR3} of triangulated categories \cite[Definition~3.4.1]{zimmermann2014representation} and this classical \emph{two-out-of three} lemma from \cite[Lemma~3.4.10]{zimmermann2014representation}.
\begin{lem}\label{lem:2out3}
Let $\mathcal{T}$ be a triangulated category with self equivalence $T$ and let 
	\begin{equation}
	\begin{tikzcd}
	A_1\ar[d]\ar[r]&A_2\ar[d]\ar[r]&A_3\ar[d]\ar[r]&TA_1\ar[d]\\
	B_1\ar[r]&B_2\ar[r]&B_3\ar[r]&TB_1
	\end{tikzcd}
	\end{equation}
be a morphism between distinguished triangles. If two of the vertical morphisms are isomorphisms, then the third one is an isomorphism as well.
\end{lem}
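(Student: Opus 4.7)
The plan is to reduce the claim to the classical five lemma of abelian homological algebra by probing both triangles with a representable functor. Without loss of generality, I assume that the first two vertical morphisms $f_1\colon A_1\to B_1$ and $f_2\colon A_2\to B_2$ are the two known isomorphisms and that we wish to conclude the same of $f_3\colon A_3\to B_3$; the two other configurations will be handled at the end by rotation.

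For an arbitrary test object $X$ of $\mathcal{T}$, I apply the cohomological functor $\Hom_{\mathcal{T}}(X,-)$ to both distinguished triangles. By the axioms of a triangulated category this produces two long exact sequences of abelian groups, and the morphism between the triangles induces a commutative ladder between these two long exact sequences. In the five consecutive positions flanking $\Hom_{\mathcal{T}}(X,A_3)\to\Hom_{\mathcal{T}}(X,B_3)$, the vertical maps are those induced by $f_1$, $f_2$, $f_3$, $Tf_1$ and $Tf_2$. Since $f_1$ and $f_2$ are isomorphisms in $\mathcal{T}$ and $T$ is an equivalence of categories, the four outer vertical maps in this portion of the ladder are isomorphisms of abelian groups. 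The classical five lemma then forces the middle vertical map to be an isomorphism as well, that is to say $\Hom_{\mathcal{T}}(X,f_3)$ is a bijection.

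Since this holds for every object $X$ of $\mathcal{T}$, the Yoneda lemma upgrades this pointwise bijectivity to the statement that $f_3$ itself is an isomorphism in $\mathcal{T}$. The remaining two configurations, in which the pair of isomorphism vertical maps occupies positions $(A_2,A_3)$ or $(A_1,A_3)$, are dealt with by invoking the rotation axiom \textbf{TR2} of triangulated categories: after applying the appropriate cyclic permutation to both distinguished triangles (and to the ladder between them) one lands in the situation already treated, and the isomorphism that results is transported back to the original diagram using that $T$ is an equivalence.

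The argument is essentially formal once one has the cohomological functor available, and the only real subtlety worth flagging is verifying that the ladder of long exact sequences commutes at every rung. This is not a difficulty but simply a matter of unpacking the fact that the original square $f_i\circ(\text{horizontal map}) = (\text{horizontal map})\circ f_{i-1}$ is preserved under application of $\Hom_{\mathcal{T}}(X,-)$, together with the naturality of the connecting maps produced by the triangulated structure.
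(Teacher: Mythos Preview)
Your proof is correct and is the standard argument for this classical lemma. The paper does not actually supply its own proof; it merely quotes the statement from \cite[Lemma~3.4.10]{zimmermann2014representation}, so there is nothing to compare against beyond noting that your five-lemma-plus-Yoneda approach is precisely the textbook one.
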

\begin{nota}
To simplify notation we now set $F = \Serre^{l}[-d]$. We will often refer to equation (\ref{eq:CY}) as the Calabi--Yau property for a certain complex of $\IncAlg$ modules.
\end{nota}
\begin{proof}[Proof of Theorem \ref{claim:Gen}]
By Theorem (\ref{thm:baptiste}), it suffices to prove that the Serre functor satisfies the Calabi--Yau property on the projective indecomposable modules. We proceed by induction on the elements $\alpha$ of the lattice $L$. For the initial step notice that, by the first assumption of the claim, the indecomposable projective module associated to the minimum of the poset $P_{\hat{0}}[0] \simeq M^{\hat{0}}_{C_{\hat{0}}}[0]$ satisfies equation (\ref{eq:CY}). Hence fix an element $\alpha > \hat{0}$ and assume as the induction hypothesis, that equation (\ref{eq:CY}) is true for all $\alpha' < \alpha$. We will refer to this as the \defn{outer induction hypothesis} (OIH). Write $C = C_{\alpha}$ for the antichain indexed by $\alpha$ and let $R = \PRes_{C}^{\alpha}$ be its associated projective resolution. If $r = |C| = 0$, then $R$ is a projective module concentrated in degree $0$ and there is nothing to prove by the second assumption in Theorem \ref{claim:Gen}. 
\noindent\emph{Step1.} So assume  $ r > 0$ and consider the distinguished triangle
	\begin{equation}\label{tikzcd:trig1}
	\begin{tikzcd}
	P_{\alpha}[0]\ar[r]&R\ar[r]&\sigma_{\geq 1}R\ar[r, "f"]& P_{\alpha}[1]
	\end{tikzcd}\end{equation}
induced by the truncation short exact sequence \cite[\href{https://stacks.math.columbia.edu/tag/0118}{Section 12.15}]{stacks-project}. A computation shows that the map $f$ is the boundary map $\partial_1$ in degree 1 and zero in all other degrees. With \textbf{TR2} this triangle can be shifted to 
	\begin{equation}\label{tikzcd:trig2}
	\begin{tikzcd}
	R\ar[r]&\sigma_{\geq 1}R\ar[r]&P_{\alpha}[1]\ar[r]&R[1]
	\end{tikzcd}\end{equation}
Because $F$ is triangulated, it sends this triangle to a triangle and we have the following diagram.
	\begin{equation*}
	\begin{tikzcd}
	R\ar[r]\arrow{d}[anchor=center, rotate=-90,yshift=0.5ex]{\sim}&\sigma_{\geq 1}R\ar[r]\ar[d, dashed, "\exists ?" ]&P_{\alpha}[1]\ar[r]&R[1]\arrow{d}[anchor=center, rotate=-90,yshift=0.5ex]{\sim}\\
	F(R)\ar[r]&F(\sigma_{\geq 1}R)\ar[r]&F(P_{\alpha}[1])\ar[r]&F(R)[1]
	\end{tikzcd}
	\end{equation*}
If we can show that there exists an isomorphism following the dashed arrow and that the left square can be chosen to commute, we can apply \textbf{TR3} to complete the diagram followed by Lemma \ref{lem:2out3} to finish the proof.

\noindent\emph{Step 2.} To construct an isomorphism 
\[\sigma_{\geq 1}R\xrightarrow{\sim}F(\sigma_{\geq 1 }R)\]
we show by downward induction on $i$ that
\[\sigma_{\geq i}R\xrightarrow{\sim}F(\sigma_{\geq i }R)\]
for $i \in \{1, \dots, r \}$. We refer to this as the \defn{inner induction hypothesis} (IIH). Taking $i = r$, the initial step follows from the outer induction hypothesis as $P_{\wedge C}[r] \simeq \sigma_{\geq r} R$. Now, fix $1\leq i < r$ and assume the property is true for $i$. To show that it is also true for $i-1$, like equation (\ref{tikzcd:trig1}), consider the truncation triangle
	\begin{equation*}
	\begin{tikzcd}
	\sigma_{\geq i}R\ar[r, "\partial_i{[i]}"]&R^{i-1}[i]\ar[r]&\sigma_{\geq i-1}R[1]\ar[r]&\sigma_{\geq i}R[1]
	\end{tikzcd}
	\end{equation*}
shifted using \textbf{TR2}. Again, we want to conclude by using \textbf{TR3} and Lemma \ref{lem:2out3} so it suffices to show that we can choose the vertical isomorphisms of the following square such that it commutes. 
	\begin{equation}\label{tikzcd:square}
	\begin{tikzcd}
	\sigma_{\geq i}R\ar[r, "\partial_i{[i]}"]\arrow{d}[anchor=center, rotate=-90,yshift=0.5ex]{\sim}\ar[d, "f"']& R^{i-1}[i]\arrow{d}[anchor=center, rotate=-90,yshift=0.5ex]{\sim}\ar[d, "g"']\\
	F(\sigma_{\geq i}R)\ar[r,  "F(\partial_i{[i]})"]& F(R^{i-1} [i])
	\end{tikzcd} 
	\end{equation}
By the inner induction hypothesis we can choose an isomorphism $f$ for the left vertical arrow and by the outer induction hypothesis we can choose an isomorphism $g$ for the right vertical arrow.

\noindent\emph{Step 3.} The isomorphism $g$ can be rectified to make the square commutative. We compare the morphism $\partial_i$, with $g^{-1}\circ F(\partial_i{[i]})\circ f$. 
Recall the isomorphism $u$ between the morphism set in the homotopy and bounded derived category, provided that the source is a complexe of projectives. The isomorphism is functorial in the target and is defined by sending a class of morphisms up to homotopy $c$ to the morphism
\begin{equation*}
		\begin{tikzcd}[row sep=small]
		&C&\\
		P\ar[ur, "c"]&&C\ar[ul,"\sim" labl, "id"']
		\end{tikzcd}
	\end{equation*}
in the derived category. This is the "fraction" $\frac{c}{1}$. In the homotopy category we want to compare the morphism $\partial_i$, which is already a morphism of complexes, with $u^{-1}(g^{-1}\circ F(\partial_i{[i]})\circ f)$. By Lemma \ref{lem:HoCatHom_Bool1} there exists $T$ an endomorphism of $\bigoplus_{|S| = i-1} P_{\wedge J}$ such that 

\begin{equation}\label{eq:Tpartial}( T \circ \partial_i) [i]= u^{-1}( g^{-1}\circ F(\partial_i{[i]})\circ f).\end{equation}
Applying $u$ again we get
\begin{equation*}
\frac{T[i]}{1} \circ \frac{\partial_i[i]}{1}= g^{-1}\circ F(\partial_i{[i]})\circ f.\end{equation*}
From now on all the morphisms will be in the derived category but we omit the denominator of maps coming from the homotopy category as it is always equal to one.

\par We prove that the map $T$ is an isomorphism. It has form $\bigoplus_{S} \lambda_S\cdot id_{P_{\wedge S}}$ because the antichain $C$ is strong. It is enough to show that the projections of $T$ onto the indecomposable summands of its target are non zero. Consider the following diagram.
	
	\begin{equation}\label{tikzcd:3.5squares}
	\begin{tikzcd}[column sep = large]
	&R^{i-1}[i]\ar[d, "T{[i]}"]\ar[r, "\pi_S{[i]}"]&P_{\wedge S}[i]\ar[d, "\textcolor{red}{\lambda_S}\cdot id{[i]}"]&\text{Shifted module maps}\ar[dll, red, shorten >=10ex, bend right = 10]\\
	\sigma_{\geq i}R\ar[ur, "\partial_i{[i]}"]\ar[d, "f", "\text{IIH}"' blue]& R^{i-1}[i]\ar[d, "g", "\text{OIH}"' blue]\ar[r, "\pi_S{[i]}"]&P_{\wedge S}[i]\ar[d, "h\leadsto \tilde h", "\text{OIH}"' blue]&\dim\Hom (R^{i-1}[i], FP_{\wedge S}[i]) = 1\ar[dll, red, shorten >=8ex, bend right = 10]\\
	F\sigma_{\geq i}R\ar[r, "F(\partial_i{[i]})"]\ar[uur, phantom, "\text{by } \ref{lem:HoCatHom_Bool1}" , blue]& FR^{i-1} [i]\ar[r, "F(\pi_S{[i]})"]&FP_{\wedge S}[i]&
	\end{tikzcd} 
	\end{equation}
The pentagon on the left is commutative since 
\[g\circ T[i]\circ \partial_i[i] = g\circ g^{-1}\circ F(\partial_i[i])\circ f =  F(\partial_i[i])\circ f.\]
The top right square illustrates equation (\ref{eq:endP}) so it commutes as well. For the bottom right square, the outer induction hypothesis gives  the isomorphism $h$. Next, $F$ is fully faithful so it induces an injective linear map on hom spaces. Hence $F(\pi_S[i])$ is non zero. Finally, $g$ is an isomorphism so the composition $F(\pi_{\wedge S}[i])\circ g$ is non zero too. Because the antichain is strong, 
\[\dim \Hom_{\IncAlg}(R^{i-1}, P_{\wedge S} [i]) = 1\]
and because $P_{\wedge S}[i]$ and $ FP_{\wedge S} [i]$ are isomorphic, we also have 
\[\dim \Hom_{\IncAlg}(R^{i-1}, FP_{\wedge S} [i]) = 1.\]
Hence it is possible to replace the map $h$ by $\tilde h = \lambda\cdot h$ for some $\lambda$ making it commute. Chasing around the diagram we thus compute
\begin{equation*}
\lambda_S\cdot \tilde{h} \circ \pi_S[i]\circ\partial[i] = F(\pi_S\circ\partial_i[i])\circ f.
\end{equation*}
By construction the projections of $\partial_i$ are all non zero and $\tilde{h}$ and $f$ are isomorphisms. Again, $F$ is an equivalence of categories so  the right hand side is non zero. Hence $\lambda_S\not = 0$ for all $S\subseteq C$ of cardinal $i$ and $T$ is an automorphism. Rearranging equation (\ref{eq:Tpartial}) we get
\begin{equation*}
\partial_i = T^{-1}\circ g^{-1}\circ F(\partial_i{[i]})\circ f.
\end{equation*}
So replacing $g$ by $g\circ T$ the square (\ref{tikzcd:square}) becomes commutative. Applying Lemma (\ref{lem:2out3}) and the inner induction hypothesis we have$$F(\sigma_{\geq 1}R)\cong \sigma_{\geq1}R.$$ 
\emph{Step 4.} To complete the outer induction  choose such an isomorphism $g$, and an isomorphism $f: R \to F(R)$. Just like in step 3 above, we want to compare the map $p: R \to \sigma_{\geq 1}R$ with $g^{-1}\circ F(p)\circ f$ and, if needed, rectify the isomorphisms $g$ and $f$ to make the square commute. Lemma \ref{lem:HoCatHom_Bool2} implies that $g^{-1}\circ F(p)\circ f = \lambda\cdot p $. Because $g$ and $f$ are isomorphisms and $F$ induces a linear bijection between hom spaces, $\lambda$ is non zero and we can replace $g$ by $\lambda\cdot g$. This concludes the proof.
\end{proof}

\section{A family of Fractionally Calabi--Yau Posets}
\label{section2}
The goal of this section is to prove Theorem \ref{thm1}. We use the combinatorial family of antichain modules introduced in \cite{Yildirim_2018} and show that it satisfies the conditions of Theorem \ref{claim:Gen}. For the convenience of the reader and because it will be used heavily in the rest of the article, we recall Y\i ld\i r\i m's combinatorial framework and proof, modifying the statements when needed to obtain results in the derived category.
\subsection{Grids and their order ideals}
\label{subsec:grids}
Let $m$ and $n$ be positive integers and $G_{m, n}$ be the product of two total orders of size $m$ and $n$ respectively. Recall that an order ideal $I$ of a poset $P$ is a subset $I\subseteq P$ which is downward closed, \emph{i.e.} \[\text{if }x\in I \text{ and } y\leq x \text{ then }y\in I.\] Order ideals of a poset can be ordered by inclusion and form a distributive lattice when equipped with the union and the intersection of subsets. We denote by $J_{m,n}$ the lattice of order ideals of $G_{m,n}$. 

\begin{wrapfigure}[10]{r}{0.15\textwidth}\centering
	\vspace{-1.5em}
	\begin{tikzpicture}
	\draw[step = 0.5cm, color = gray](0,0) grid (2,3);
	\draw[thick](.5, 0)--(.5, .5)--(1, .5)--(1, 1)--(1.5, 1)--(1.5, 3)--(2,3);
	\filldraw (.5,.5) circle (1pt);
	\filldraw (1,1) circle (1pt);
	\filldraw (1.5,3) circle (1pt);
	\filldraw (2,3) circle (1pt);
	\end{tikzpicture}
	\caption{Order ideal of the $5\times 7$ grid.}\label{fig:grid}
\end{wrapfigure}
\par This lattice possesses several interesting description. 
Take an order ideal or down closed set $I$ of $G_{m,n}$. It is completely determined by a sequence  of length $m$ of non negative integers by counting and listing elements of $I$ contained in each chain of the grid from $m$ to $1$. Figure \ref{fig:grid} depicts an order ideal of the $5\times 7$ grid as a \defn{path}. The elements of the order ideals of Figure \ref{fig:grid} should be read as the grid points that lie below the drawn path. Counting from left to right the number of elements of the order ideal contained in each chain, or column, we get the non zero integer sequence $(0, 2, 3, 7, 7)$ which is non decreasing. We obtain such a non decreasing sequence when applying this procedure for any order ideal $I$ precisely because order ideals are down closed. Paths in grids like in Figure \ref{fig:grid} will be used throughout the current section and Subsection \ref{subsec:homsets1} to depict order ideals.
We also just described a bijection
\begin{equation}
J_{m,n} \cong \{(a_1, \dots, a_m)| a_i\in \llbracket 0, n\rrbracket \text{ and } a_1\leq \dots, \leq a_n\}
\end{equation} with non decreasing sequences.
If the second set is equipped with termwise comparison this is an isomorphism of posets. We call these non decreasing sequences \defn{partitions}. They can also be written as 
\[\alpha = (\lambda_1^{\mu_1}, \dots, \lambda_r^{\mu_r} )\]
with $\sum_i \mu_i = m$, where $\mu_i$ encodes the multiplicity of the value $\lambda_i$ and $\lambda_i \not = \lambda_j$ if $i\not = j$.
These partitions can classically be counted as follows: choosing a partition amounts to putting $m$ balls corresponding to coefficients $(a_1, \dots, a_m)$ into $n+1$ boxes, the possible values of the coefficients. Which also amounts to placing $n$ sticks in $m+n$ possible slots. This means there are exactly $${m+n\choose m}$$ partitions. 
Next we introduce two maps that send these non decreasing sequences of length $m$ to increasing sequences of length $m$. Let \[\mathcal{Z} = \{-m, \dots, -1, 0, 1, \dots, n\}\] be a set of representatives of $\mathbb{Z}/(m+n)\mathbb{Z}$. A \defn{configuration} $C$ of $\mathcal{Z}$ is a subset of size $m$ of $\mathcal{Z}$. We write it $C = \{c_1< \dots < c_m\}$ is an increasing sequence naively using the order relation on $\mathbb{Z}$.  We write $C_{m,n}$ the set of configurations of length $m$ on $\mathcal{Z}$. Choosing a configuration amounts to picking $m$ distinct elements from a set of cardinal $m+n+1$ so the cardinality of $C_{m,n}$ is \[{m+ n + 1\choose m}.\]
Given a partition $\alpha$ we can construct a configuration containing $\alpha$'s coefficients in its non negative side and encoding the multiplicities of $\alpha$ in its negative side. 
Write 
\[x_i = \sum_{k = 1}^i \mu_i\]
to record the index of the last occurence of the $i^{th}$ coefficient. It will be called the \defn{ending index}. We set $x_0 = 0$ as a convention. The index $x_{i-1} + 1$ is the first occurence of $\lambda_i$ and will be called the \defn{starting index}. Think of the negative side as the indices of the elements of the sequence $\alpha$ but with a minus sign. Out of the many available options to encode the multiplicities, here are two that turn out to fit the problem perfectly:
\begin{itemize}[leftmargin=*, topsep=0pt, partopsep=0pt, itemsep = 0pt]
\item keep all negative elements except the opposite of the starting index of each coefficient. The resulting configuration is called the \defn{left configuration} associated to $\alpha$, and we denote by it $L_{\alpha}$. The map sending $\alpha$ to $L_{\alpha}$ is denoted by $\phi_l$;
\item keep all negative elements except the opposite of the ending index of each coefficient. The resulting configuration is called the \defn{right configuration} associated to $\alpha$, and we denote it by $R_{\alpha}$. The map sending $\alpha$ to $R_{\alpha}$ is denoted $\phi_r$.
\end{itemize}
\begin{ex}\label{ex:phiRphiL}
Take $n = 7$, $m = 5$ and consider the partition $a = (0, 2, 3, 7, 7)$. So we have $r = 4$ and 
$$x_1 = 1, x_2 = 2, x_3 = 3 \text{ and } x_4 = 5.$$
The associated left and right configuration are respectively
$$\{-5< 0<2<3<7\} \text{ and } \{-4<0<2<3<7\}.$$
\end{ex}
\begin{prop}[\protect{\cite[Proposition~3.3]{Yildirim_2018}}]\label{prop:injections}
The maps $\phi_l$ and $\phi_r$ are injective.
\end{prop}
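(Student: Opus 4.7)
The plan is to exhibit, for each of $\phi_l$ and $\phi_r$, an explicit left inverse defined on the image. Any partition $\alpha = (\lambda_1^{\mu_1}, \dots, \lambda_r^{\mu_r})$ is completely determined by two pieces of data: the strictly increasing sequence of distinct values $\lambda_1 < \dots < \lambda_r$ in $\llbracket 0, n\rrbracket$, and the sequence of multiplicities $(\mu_1, \dots, \mu_r)$, or equivalently the starting indices $(x_{i-1}+1)_{i=1}^r$, or the ending indices $(x_i)_{i=1}^r$. So it suffices to show that both of these pieces of data can be read off from $L_\alpha$ (respectively $R_\alpha$).

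First I would split a configuration $C = L_\alpha \subset \mathcal{Z}$ into its non-negative part $C^{\geq 0} = C \cap \llbracket 0, n\rrbracket$ and its negative part $C^{<0} = C \cap \llbracket -m, -1\rrbracket$. Directly from the construction of $\phi_l$, we have $C^{\geq 0} = \{\lambda_1, \dots, \lambda_r\}$, so the distinct values of $\alpha$ are recovered as the elements of $C^{\geq 0}$ listed in increasing order, and in particular the integer $r$ is recovered as $|C^{\geq 0}|$. Next, $C^{<0}$ is by definition the complement inside $\llbracket -m, -1\rrbracket$ of $\{-(x_0+1), -(x_1+1), \dots, -(x_{r-1}+1)\}$, so the set of starting indices is recovered as
\[\{x_0+1, \dots, x_{r-1}+1\} = \llbracket 1, m\rrbracket \setminus (-C^{<0}).\]
Sorting these in increasing order and using $x_r = m$ recovers the $x_i$'s, and hence the multiplicities $\mu_i = x_i - x_{i-1}$. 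Thus $\alpha$ is uniquely determined by $L_\alpha$, proving injectivity of $\phi_l$.

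The argument for $\phi_r$ is identical, after replacing starting indices by ending indices: $C^{\geq 0}$ still yields the distinct values $\lambda_i$, and $\llbracket 1, m\rrbracket \setminus (-C^{<0})$ now yields $\{x_1, \dots, x_r\}$, again recovering the $\mu_i$'s.

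The only point requiring minor care is the accounting of cardinalities, namely that $|L_\alpha| = |R_\alpha| = m$, so that the recipe above is well-defined (we remove exactly $r$ elements from the $m$ negatives and add exactly $r$ non-negative values). This is a one-line check using $\sum_i \mu_i = m$, and so I do not anticipate any serious obstacle; the statement is essentially a bookkeeping lemma that isolates the combinatorial content used throughout Section \ref{section2}.
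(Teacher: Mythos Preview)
Your proposal is correct and follows essentially the same approach as the paper: recover the coefficients from the non-negative part of the configuration and the multiplicities from the gaps in the negative part. The paper's proof is a two-sentence summary of exactly this idea, so your version is simply a more explicit unpacking of the same argument.
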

\begin{proof}Partitions are entirely determined by their coefficients and multiplicities. These can be recovered from the positive elements of a configurations and its negative gaps, \emph{i.e.} missing values, respectively. 
\end{proof}They are not surjective as $\phi_l(\alpha)$ cannot contain $-1$ and $\phi_r(\alpha)$ cannot contain $-m$.
To visualise configurations consider a table with $m+n+1$ columns and put a dot, or a bead in the columns corresponding to the elements of the configuration at hand. We call this the \defn{abacus} associated to the configuration.
\begin{ex}\label{ex:confToAbac} The abacus associated with the right configuration of the partition $a$ from Example \ref{ex:phiRphiL} is as follows
\begin{center}\begin{tabular}{c c c c c | c c c c c c c c}
-5&-4&-3&-2&-1&0&1&2&3&4&5&6&7 \\
\hline
&$\bullet$&&&&$\bullet$&&$\bullet$&$\bullet$&&&&$\bullet$.
\end{tabular}
\end{center}
Note that we used the map $\phi_r$ for this configuration.
\end{ex}
Each bead of the \defn{negative side} adds one to the multiplicity of one of the coefficients. If we are using the map $\phi_r$ to obtain the configuration, the bead sitting in the $-k^{th}$ column, is associated to the $k^{th}$ bead to its right. Indeed, gaps in the negative side indicate changes of coefficients. If the columns $-1$ to $-k+1$ are empty then exactly $(k-1)^{th}$ coefficients end before the $k^{th}$ element of the sequence and the claim holds. Any extra bead between $-1$ and $-k+1$ removes one such gap. If there are $s$ such beads in the negative side, the coefficient  we are looking for corresponds to the $(k-s)^{th}$ bead in the positive side. Note that $s$ is at most $k-1$, so the procedure always yields a bead in the positive side.

\begin{ex}
\begin{figure}[h!]
\includegraphics[width=9.6cm]{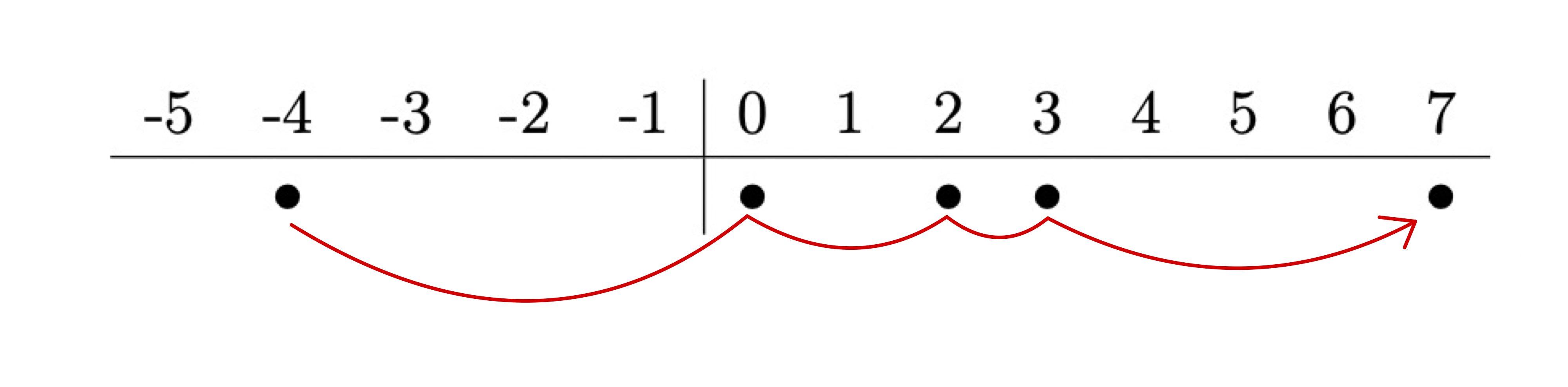}
\centering
\caption{Illustrating the partial inverse of $\phi_r$}
\label{fig:exPhirInverse}
\end{figure}
Figure \ref{fig:exPhirInverse} illustrates this idea on the abacus of Example \ref{ex:confToAbac}. Its only negative bead is in column $-4$. Counting four beads to the right of it, we find the bead in column $7$. This step is illustrated by the long red arrow. The bead in column $7$ corresponds to the coefficient $\lambda_4$. It is the only coefficient with multiplicity 2 in the partition $a$.
\end{ex}

Similarly, if the map $\phi_l$ was used, negative beads can be associated to their positive counterparts by skipping $k-1$ beads to the left. 
\subsection{A Family of antichain modules} 
To apply the machinery presented in Section \ref{section4}, we identify a well behaved family of antichains. To create an antichain below a certain element $\alpha$ it suffices to identify transformations on $\alpha$ whose respective support cannot be compared. The family we discuss was discovered by Y\i ld\i r\i m and used to prove that the Coxeter transformation of $J_{m,n}$ is periodic. The rest of the paper will focus on these antichains and their associated modules. We introduce some extra combinatorial data before discussing the antichains themselves, their associated modules and projective resolutions.

\paragraph{Enhancements} 
We consider several antichains below $x$ for each $x\in J_{m,n}$. These different antichains can be encoded as decorations or \defn{enhancements} of $x$. Recall that Theorem \ref{claim:Gen} only requires one antichain below each element of the lattice. Y\i ld\i r\i m introduced this apparent excess of antichain in order to construct a family of antichain modules that is stable under the Serre functor and thus compute its orbits. This is recalled here in Proposition \ref{prop:ProjToInjIso}.

\begin{Def}\label{def:RHP}
A \defn{right enhanced partition} is a sequence $$(\lambda_1^{\mu_1}, \dots, \lambda_r^{\mu_r}| n^{\mu_{r+1}} )$$ 
where multiplicities $\mu_1, \dots, \mu_{r+1}$ sum to $m$. Note that we allow $\mu_{r+1} = 0$. If  $\mu_{r+1} \neq 0$ we say the partition is strictly enhanced. We denote by $E^R_{m,n}$ the set of right enhanced partitions. Similarly a \defn{left enhanced partition} is a sequence 
$$(0^{\mu_0} | \lambda_1^{\mu_1}, \dots, \lambda_r^{\mu_r}).$$
\end{Def}

\begin{wrapfigure}[9]{r}{.5\textwidth}\centering
		\begin{tikzpicture}[scale = 1.5]
			\draw[step = 0.5cm, color = gray](0,0) grid (3.5,2);
			\draw[thick](.5, 0) -- (1, 0) -- (1, 1) -- (2, 1) -- (2, 1.5) -- (2.5, 1.5) -- (2.5, 2) --(3, 2)--(3.5, 2);
			\draw[thick, red](3, 2) -- (3.5, 2);
			\foreach \n in {(0.5, 0), (1, 1), (1.5, 1), (2, 1.5) , (2.5, 2)}{
			\filldraw \n circle (1pt);}
			\filldraw[red] (3, 2) circle (1pt);
			\filldraw[red] (3.5, 2) circle (1pt);
			\draw[thick](2.9, 1.7) -- (2.9, 2.1);
		\end{tikzpicture}
	\caption{Enhanced partitions in grids}\label{fig:enhancements}
\end{wrapfigure}
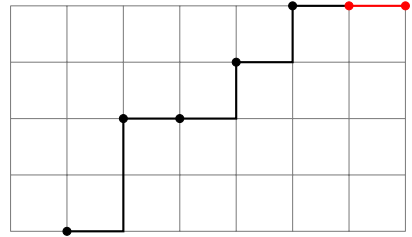

\begin{ex}\label{ex:enhancements}The right enhanced partition $\beta = (0, 1, 3^2, 4, 5 | 5^2)$ can be represented as the path on the right with the values behind the bar coloured in red. See Figure \ref{fig:enhancements}.

\end{ex}
The set of left enhanced partitions will be written $E^L_{m,n}$. A partition with $\mu_{r+1} = 0 = \mu_0$ is called \defn{plain}. Just like for plain partitions, we count $${m+n+1\choose m}$$ right enhanced partitions as well as left enhanced ones.
The map $\phi_r$ naturally be extends to right enhanced partitions without any change, making it a bijection. By "without any change" we mean that, just as for plain partitions, we remove the values $-x_1$ to $-x_r$ in the negative side and do not remove $-x_{r+1} = -m$. Similarly, the map $\phi_l$ extends to left enhanced partitions, not removing $-1$ from the negative side when $\mu_0 > 0$.
\begin{ex}
Consider enhanced versions of the partition $a$ from Example \ref{ex:phiRphiL}. The left enhanced partitions associated to $(0, 2, 3, 7\,|\,7)$ and $(0, 2, 3 \,|\, 7, 7)$ are 
$$\{-5<0<2<3<7\} \text{ and } \{-5<-4<0<2<3\}$$
respectively. The right configuration associated to the left enhanced partition $(0\,|\,2,3, 7, 7)$ is 
$$\{-5<-1<2<3<7\}.$$ These configurations are represented in an abacus as follows:
\begin{center}\begin{tabular}{c c c c c | c c c c c c c c}
-5&-4&-3&-2&-1&0&1&2&3&4&5&6&7 \\
\hline
$\bullet$&&&&&$\bullet$&&$\bullet$&$\bullet$&&&&$\bullet$\\
$\bullet$&$\bullet$&&&&$\bullet$&&$\bullet$&$\bullet$&&&&\\
$\bullet$&&&&$\bullet$&&&$\bullet$&$\bullet$&&&&$\bullet$\\
\end{tabular}
\end{center}
\end{ex}

\paragraph{Corresponding antichains}
Our antichains are obtained by modifying the coefficients and leaving multiplicities unchanged. For a right enhanced partition $\alpha = (\lambda_1^{\mu_1}, \dots, \lambda_r^{\mu_r}|n^{\mu_{r+1}})$ define the \defn{mutable coefficients} to be $S_{\alpha} = \{\epsilon, \dots, r\}$ the indices corresponding to non zero coefficients. The number $\epsilon$ is either $1$ or $2$. Please remark that this excludes the coefficients beyond the enhancement bar.

\begin{Def}\label{def:qj}
Let $\alpha = (\lambda_1^{\mu_1}, \dots ,  \lambda_r^{\mu_r}|n^{\mu_{r+1}})$ be a right enhanced partition. For any  subset $J$ of $S_{\alpha}$ define a new partition $q_J(\alpha) = ((\lambda_1')^{\mu_1}, \dots, (\lambda'_r)^{\mu_r}|n^{\mu_{r+1}})$ by 
$$\lambda'_i = \begin{cases}
		\lambda_i - 1 &\text{ if } i\in J,\\
		\lambda_i &\text{ otherwise.}
		\end{cases}
$$
\end{Def}
Consider now the set
	\begin{equation}\label{def:YildirimModules}
		C_{\alpha} = \{q_i(\alpha)| i\in S_{\alpha}\}.
	\end{equation}
Because $q_i(\alpha)$ and $q_j(\alpha)$ differ from $\alpha$ at different indices, their associated plain partitions form an antichain and we denote $\PRes_{\alpha}$ the perfect complex associated with it. 
	\begin{wrapfigure}[9]{r}{.4\textwidth}\centering
		\begin{tikzpicture}[scale = 1.5]
			\draw[step = 0.5cm, color = gray](0,0) grid (3.5,2);
			\draw[thick](0.5, 0) -- (1, 0) -- (1, .5) -- (2, .5) -- (2, 1.5) -- (3, 1.5) -- (3, 2);
			\draw[thick, dotted](1, .5) -- (1, 1) -- (2, 1);
			\draw[thick, dotted](2.5, 1.5) -- (2.5, 2) -- (3, 2);
			\draw[thick] (3, 2) -- (3.5, 2);
			\draw[red, ->] (1.5, .9) -- (1.5, .6);
			\draw[red, ->] (2.75, 1.9) -- (2.75, 1.6);
			\filldraw (.5, 0) circle (1pt);
			\filldraw (1, .5) circle (1pt);
			\filldraw (1.5, .5) circle (1pt);
			\filldraw (2, 1.5) circle (1pt);
			\filldraw (2.5, 1.5) circle (1pt);
			\filldraw (3, 2) circle (1pt);
			\filldraw (3.5, 2) circle (1pt);
			\draw[thick](2.9, 1.7) -- (2.9, 2.1);
		\end{tikzpicture}
		\caption{Illustration of $q_K$}\label{fig:antichains}
	\end{wrapfigure}
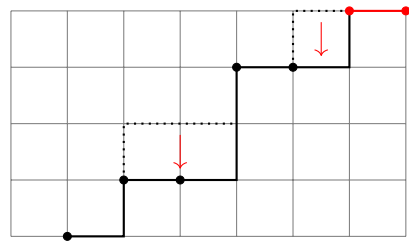
We have opted to define $q_i(\alpha)$ as enhanced partitions as these transformations will also parametrised extensions between objects $\PRes_{\alpha}$ described in the next section. However the antichains associated with this construction are made up of elements of the poset, which are plain partitions.
\begin{ex}\label{ex:antichains}Consider the right enhanced partition $\beta$ from Example \ref{ex:enhancements}. Then we have $S_{\beta} = \{2, 3, 4, 5\}$. Picking the subset $K = \{3, 5\}$ of $S_{\beta}$ yields $q_K(\beta) = (0, 1, 2^2, 4^2| 5^2)$. See Figure \ref{fig:antichains}. 
\end{ex}
\begin{prop}\label{prop:YildBool} The antichain $C_{\alpha}$ is a boolean antichain below $\alpha$. 
\end{prop}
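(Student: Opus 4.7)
The plan is to verify the two defining properties (inclusive and intersective) by a direct term-wise computation, exploiting the fact that the lattice $J_{m,n}$ is distributive with meet and join given respectively by the term-wise minimum and maximum of the underlying non-decreasing sequences.

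First I would establish the key formula:
\[
\bigwedge_{i\in J} q_i(\alpha) \;=\; q_J(\alpha) \qquad\text{for every } J\subseteq S_\alpha.
\]
Indeed, $q_i(\alpha)$ differs from $\alpha$ only in the block of indices corresponding to $\lambda_i$, where its value drops from $\lambda_i$ to $\lambda_i-1$. Thus for any index $j$ lying in the block attached to some $\lambda_k$, one has $q_i(\alpha)_j = \lambda_k - [k=i]$, and taking the term-wise minimum over $i\in J$ gives $\lambda_k - [k\in J]$, which is exactly the $j$-th coefficient of $q_J(\alpha)$. As a by-product, each $q_i(\alpha)$ is $\leq\alpha$ and is a legitimate partition since $\lambda_i \geq \lambda_{i-1}+1$ for $i \in S_\alpha$, so $C_\alpha$ is really an antichain below $\alpha$ (pairwise incomparability is clear as the elements differ from $\alpha$ in disjoint blocks).

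To check \textbf{inclusivity}, suppose $S,S'\subseteq C_\alpha$ correspond to index sets $J,J'\subseteq S_\alpha$, and assume $q_J(\alpha)\leq q_{J'}(\alpha)$. Evaluating both sides on the block attached to $\lambda_k$ yields $\lambda_k - [k\in J] \leq \lambda_k - [k\in J']$, i.e.\ $[k\in J']\leq [k\in J]$, so $J'\subseteq J$ and hence $S'\subseteq S$. For \textbf{intersectivity}, using that the join inside $[\hat 0,\alpha]$ is the term-wise maximum, the $j$-th coefficient (with $j$ in the block of $\lambda_k$) of $q_J(\alpha)\vee q_{J'}(\alpha)$ is
\[
\max\bigl(\lambda_k-[k\in J],\;\lambda_k-[k\in J']\bigr) = \lambda_k - [k\in J\cap J'],
\]
which is the $j$-th coefficient of $q_{J\cap J'}(\alpha) = \bigwedge_{i\in J\cap J'} q_i(\alpha)$. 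Hence $(\wedge S)\vee(\wedge S') = \wedge(S\cap S')$.

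There is essentially no obstacle: once the structure of $q_i(\alpha)$ as a one-block perturbation of $\alpha$ is made explicit and the lattice operations are identified with term-wise min/max, both the inclusive and intersective conditions reduce to the trivial identities $[k\in J']\leq[k\in J]\iff J'\subseteq J$ and $\max(-[k\in J],-[k\in J']) = -[k\in J\cap J']$. Putting these together gives that $C_\alpha$ is boolean as an antichain below $\alpha$, which is what we wanted.
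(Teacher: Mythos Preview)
Your proof is correct and follows essentially the same term-wise computation as the paper. The only cosmetic difference is that the paper verifies the \emph{strong} condition (incomparability of $q_I(\alpha)$ and $q_J(\alpha)$ for distinct $I,J$ of equal cardinality) rather than \emph{inclusive} directly, relying on Lemma~\ref{lem:InclusiveStrong} for their equivalence; your direct check of inclusivity via $[k\in J']\leq[k\in J]$ is arguably cleaner, and your explicit statement of the meet formula $\bigwedge_{i\in J}q_i(\alpha)=q_J(\alpha)$ makes transparent what the paper uses implicitly.
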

\begin{proof}First we check that the antichain is \defn{strong}. Take $I, J \subseteq S_{\alpha}$ such that $|I| = |J| > 0$ and $I\not = J$. Take $i\in I\setminus J$ and $j\in J\setminus I$. Then it holds that
\[\lambda_i - 1 = q_I(\alpha)_{x_i} < q_J(\alpha)_{x_i} = \lambda_i\]
and symmetrically at the index $x_j$. Hence $q_I(\alpha)$ and $q_J(\alpha)$ cannot be compared and the antichain is strong. Now we check that the antichain is \defn{intersective}. Recall that the join of two partitions is the termwise maximum. We take $I, J\subset S_{\alpha}$ to be non empty. We write 
\[\gamma = q_I(\alpha)\vee q_J(\alpha).\]
For each $i\in S_{\alpha}$, and every $k\in ]x_{i-1}, x_i]$ we have 
\[\gamma_k = \begin{cases}
\lambda_i - 1 &\text{if } k\in I\cap J\\
\lambda_i &\text{otherwise.}
\end{cases}\]
Hence $\gamma = q_{I\cap J}(\alpha).$
\end{proof}

\paragraph{Associated Intervals}
\begin{prop}[\protect{\cite[Proposition~2.13]{Yildirim_2018}}]\label{prop:f}Let $\alpha$ be a right enhanced partition. Then $\PRes_{\alpha}$ is a projective resolution of the interval $[f(\alpha), \alpha]$ where the function $f$ is defined by
\begin{equation}\label{eq:ProjInt}
f : (\lambda_1^{\mu_1}, \dots, \lambda_r^{\mu_r} | n^{\mu_{r+1}}) \mapsto (0^{\mu_1-1}|\lambda_1^{\mu_2}, \dots, \lambda_r^{\mu_{r+1}+1}).
\end{equation}

\end{prop}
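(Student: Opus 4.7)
The plan is to reduce the statement to the general antichain-to-interval correspondence already recorded in the preliminaries: for any $a \leq \alpha$ in a finite lattice $L$, the maxima of $\{\gamma < \alpha : \gamma \not\geq a\}$ form an antichain below $\alpha$ whose associated antichain module is the interval $[a,\alpha]$. Since $\PRes_\alpha$ is, by the definition given in (\ref{def:YildirimModules}), the canonical projective resolution of $M_{C_\alpha}^\alpha$, it is enough to show that the antichain $C_\alpha = \{q_i(\alpha) : i \in S_\alpha\}$ coincides with the antichain of maxima associated to $a := f(\alpha)$.

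To compare the two antichains I would first translate the condition $\gamma \not\geq f(\alpha)$ into a numerical condition on $\gamma$. Write $x_i = \mu_1 + \dots + \mu_i$ for the ending index of the $i$-th block of $\alpha$. Inspection of the formula for $f(\alpha)$ shows that $f(\alpha)$ is the coordinate-wise minimal non-decreasing sequence dominated by $\alpha$ satisfying $f(\alpha)_{x_i} = \lambda_i$ for every $i \in S_\alpha$; equivalently, $f(\alpha)_j = \max\{\lambda_i : i \in S_\alpha,\ x_i \leq j\}$, with the convention that the maximum over the empty set is $0$. Since partitions are non-decreasing, the relation $\gamma \geq f(\alpha)$ collapses to $\gamma_{x_i} \geq \lambda_i$ for every $i \in S_\alpha$, so $\gamma \not\geq f(\alpha)$ inside $[\hat 0,\alpha]$ means precisely that $\gamma_{x_i} \leq \lambda_i - 1$ for some $i \in S_\alpha$.

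I would then identify, for each fixed $i \in S_\alpha$, the maximum of $\{\gamma \leq \alpha : \gamma_{x_i} \leq \lambda_i - 1\}$. On the $i$-th block $\{x_{i-1}+1, \dots, x_i\}$ the partition $\alpha$ is constant equal to $\lambda_i$; to the left $\alpha$ takes values at most $\lambda_{i-1} < \lambda_i$, and to the right no constraint is imposed. Hence the (well-defined) maximum is obtained by lowering $\alpha$ from $\lambda_i$ to $\lambda_i - 1$ exactly on this block, which is $q_i(\alpha)$; the inequality $\lambda_{i-1} \leq \lambda_i - 1$ ensures non-decreasingness is preserved. By Proposition~\ref{prop:YildBool} the elements $q_i(\alpha)$ are pairwise incomparable, so $C_\alpha$ is precisely the antichain of maxima associated to $f(\alpha)$, and the general correspondence then yields $M_{C_\alpha}^\alpha \cong [f(\alpha),\alpha]$. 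The only genuine obstacle is this combinatorial identification of the maxima with the $q_i(\alpha)$; everything else is a formal application of the antichain-to-interval dictionary.
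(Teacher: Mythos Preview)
Your argument is correct and shares the same combinatorial core as the paper's: both hinge on the observation that, for $\gamma \leq \alpha$, the condition $\gamma \geq f(\alpha)$ reduces to $\gamma_{x_i} = \lambda_i$ for every $i \in S_\alpha$, which in turn is equivalent to $\gamma \not\leq q_i(\alpha)$ for every $i$. The only difference is the direction of the wrapping: the paper starts from $C_\alpha$ and computes the support of $M_{C_\alpha}^\alpha$ directly, while you start from $f(\alpha)$, identify the maxima of $\{\gamma < \alpha : \gamma \not\geq f(\alpha)\}$ as the $q_i(\alpha)$, and then invoke the interval--antichain correspondence from the preliminaries; these are dual viewpoints on the same computation.
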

It is more convenient to define the image of a right enhanced partition by the function $f$ to be a left enhanced partition. However, the interval $[f(\alpha), \alpha]$ is an interval of $J_{m,n}$, \emph{i.e.} is the interval with bounds the corresponding plain partitions.
\begin{proof}An element $\beta$ of the lattice is in the support of the antichain module associated
to $\PRes_{\alpha}$ if and only if $\beta\leq \alpha$ and for all $i\in S_{\alpha}$, we have $\beta \not\leq q_i(\alpha)$. Because the partition $q_i(\alpha)$ differs from $\alpha$ only between the indices $x_{i-1} + 1 $ and $x_i$, there exists $k \in [x_{i-1}+1, x_i]$ such that $b_k = a_k = \lambda_i$. The sequence $\beta$ is increasing so this is equivalent to $b_{x_i}= \lambda_i$ for all $i\in S_{\alpha}$. The partition $f(\alpha)$ satisfies these conditions. For any other $\beta$ satisfying them, for $k \in [x_{i-1}+1, x_i]$, we have $f(\alpha)_k =\lambda_{i-1} = b_{x_{i-1}}\leq b_k$, hence $\beta\in [f(\alpha), \alpha]$.
\end{proof}

In this proof we used the fact that the support of $\PRes_{\alpha}$ is the set
\begin{equation}\label{eq:SuPalpha}
\{\beta \leq \alpha | \forall i\in S_{\alpha}, \beta_{x_{i}} = \lambda_i\}.
\end{equation}
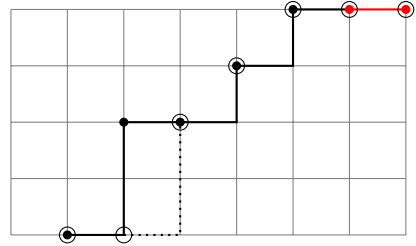
\begin{wrapfigure}[8]{r}{.4\textwidth}\centering
	\begin{tikzpicture}[scale =1.5]
		\draw[step = 0.5cm, color = gray](0,0) grid (3.5,2);
		\draw[thick](.5, 0) -- (1, 0) -- (1, 1) -- (2, 1) -- (2, 1.5) -- (2.5, 1.5) -- (2.5, 2) --(3, 2);
		\draw[thick](3, 2) -- (3.5, 2);
		\foreach \n in {(0.5, 0), (1, 1), (1.5, 1), (2, 1.5) , (2.5, 2)}{
		\filldraw \n circle (1pt);}
		\filldraw (3, 2) circle (1pt);
		\filldraw (3.5, 2) circle (1pt);
		\draw[thick, dotted](1, 0) -- (1.5, 0) -- (1.5, 1);
		\foreach \n in {(0.5, 0), (1, 0), (1.5, 1), (2, 1.5) , (2.5, 2), (3, 2), (3.5, 2)}{
		\draw \n circle (2pt);}
		\draw[thick](2.9, 1.7) -- (2.9, 2.1);
	\end{tikzpicture}
	\caption{partitions $\beta$ and $f(\beta)$}\label{fig:pathFalpha}
\end{wrapfigure}
This highlights the role of the indices $x_i$ for $i\in S_{\alpha}$. These indices will serve as comparison points between our partitions in many proofs to follow.

\begin{ex}
Consider the partition $\beta$ from Example \ref{ex:enhancements}. Then $f(\beta) = (|0, 1^2, 3^1, 4, 5^3)$ with the corresponding path on the right in Figure \ref{fig:pathFalpha}. Please note how the value of $\beta$ and $f(\beta)$ match at the ending indices $1, 3, 4$ and $5$ and how the values of $f(\beta)$ are minimal given those constraints.
\end{ex}

\subsection{Y\i ld\i r\i m's theorem}\label{subsec:YildThm}
The following key result is a categorified version of \cite[Proposition~4.2]{Yildirim_2018}.
\begin{prop}\label{prop:YildirimThm}Let $\alpha$ be a right enhanced partition. Then 
	$$\Serre^{m+n+1}(\PRes_{\alpha}) \cong \PRes_{\alpha}[mn].$$
\end{prop}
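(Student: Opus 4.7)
The plan is to study the Serre functor's action on the family $\{\PRes_\alpha\}$ directly, by establishing a one-step recursion of the form $\Serre(\PRes_\alpha) \cong \PRes_{\sigma(\alpha)}[\delta(\alpha)]$ for an explicit combinatorial operation $\sigma$ on right enhanced partitions and an integer shift $\delta(\alpha)$. Because $\PRes_\alpha$ is a projective resolution of the interval module $[f(\alpha), \alpha]$ by Proposition \ref{prop:f}, the statement is equivalent to tracking what happens to these intervals under $\Serre$, and the shift $mn$ will emerge by summing $\delta$ over one full cycle of $\sigma$.

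First I would describe $\sigma$. Viewing each right enhanced partition via $\phi_r$ as a configuration in $\mathcal{Z} = \{-m, \dots, n\}$, a set of size $m+n+1$, the natural candidate for $\sigma$ is cyclic rotation by one position modulo $m+n+1$. This immediately gives $\sigma^{m+n+1} = \mathrm{id}$. The one-step formula would then be established as follows: the Serre functor sends each $P_{q_S(\alpha)}$ appearing as a summand of $\PRes_\alpha$ to the injective $I_{q_S(\alpha)}$, and I would identify the resulting complex of injectives, after a homological shift $\delta(\alpha)$, with the projective resolution $\PRes_{\sigma(\alpha)}$ of the interval $[f(\sigma(\alpha)), \sigma(\alpha)]$. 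This is the derived-category lift of the Coxeter transformation computation from \cite[Proposition~4.2]{Yildirim_2018}, translated through the dictionary between antichains below $\alpha$ and configurations.

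Iterating $m+n+1$ times and using the periodicity of $\sigma$ yields
\[
\Serre^{m+n+1}(\PRes_\alpha) \cong \PRes_\alpha\Bigl[\sum_{i=0}^{m+n}\delta(\sigma^i(\alpha))\Bigr].
\]
It then remains to show the total shift equals $mn$, independently of $\alpha$. I would do this by relating $\delta$ to a bead-crossing count on the abacus: under one full cyclic rotation, each of the $m$ beads of the configuration must cross each of the $n$ non-bead positions exactly once, producing a global contribution of $mn$ to the cumulative homological shift. Since the count is intrinsic to the abacus of size $m+n+1$ with $m$ beads, it does not depend on the starting configuration, which is exactly what we need.

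The hard part is the one-step identification $\Serre(\PRes_\alpha) \cong \PRes_{\sigma(\alpha)}[\delta(\alpha)]$. It requires either constructing an explicit quasi-isomorphism between the complex of injectives obtained from applying $\Serre$ term-by-term and the projective resolution $\PRes_{\sigma(\alpha)}$, or --- more cleanly --- passing through the common data of the interval $[f(\sigma(\alpha)),\sigma(\alpha)]$ and using the combinatorial dictionary between intervals and configurations. Controlling the boundary maps from equation \eqref{eq:boundComp} under this identification, together with the sign bookkeeping inherited from the antichain ordering, will be the most delicate step. The boolean property of $C_\alpha$ (Proposition \ref{prop:YildBool}) and the one-dimensionality of hom spaces between boolean antichain modules and intervals (Theorem \ref{lem:homsBool}) should make the comparison essentially formal once the combinatorial picture is in place.
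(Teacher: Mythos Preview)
Your overall strategy matches the paper's exactly: establish a one-step isomorphism $\Serre(\PRes_\alpha)\cong\PRes_{\tilde{f}(\alpha)}[|S_\alpha|]$ where $\tilde{f}$ acts as the cyclic left shift on configurations, then iterate and sum the shifts. The paper carries out the one-step identification by passing through the interval, just as you suggest: it computes that $\Serre(\PRes_\alpha)=\mathcal{I}_\alpha$ has homology the interval $[\delta(\alpha), g\circ\delta(\alpha)]$ concentrated in degree $|S_\alpha|$, then identifies this interval with $[f(\tilde{f}(\alpha)),\tilde{f}(\alpha)]$ using that $f$ and $g$ are mutual inverses.

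Your shift computation, however, is off. There are $n+1$ non-bead positions on the abacus (since $|\mathcal{Z}|=m+n+1$ and there are $m$ beads), not $n$, so a literal bead-crossing count would give $m(n+1)$. The correct bookkeeping does not involve crossings at all: the per-step shift is $\delta(\alpha)=|S_\alpha|$, which equals the number of beads occupying the positions $\{1,\dots,n\}$ (the strictly positive positions). Over one full cycle of $m+n+1$ left shifts, each of the $m$ beads visits every one of the $m+n+1$ positions exactly once, and therefore sits in $\{1,\dots,n\}$ for exactly $n$ of the steps. Summing gives $\sum_{i} |S_{\tilde{f}^{\,i}(\alpha)}| = mn$. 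So the ingredient you need is the identification of the per-step shift with this occupancy count, not a crossing count.
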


First we describe the action of the Serre functor on the object $\PRes_{\alpha}$. Recall that it sends the projective indecomposable $P_{\alpha}$ to the injective indecomposable $I_{\alpha}$.

\begin{Def} We call $\mathcal{I}_{\alpha}$ the image of the projective resolution $\PRes_{\alpha}$ under the Serre functor. Because $\PRes_{\alpha}$ is a complex of projective modules, we get $\mathcal{I}_{\alpha}$ by tensoring the components of  $P_{\alpha}$ by $\IncAlg^{*}$ which gives us
	\begin{equation}\label{eq:InjRes}
	\mathcal{I}_{\alpha} : 0\to I_{q_{S_{\alpha}}(\alpha)}\xrightarrow{\partial_r}\bigoplus_{\substack{J\subseteq S_{\alpha},\\|J| = r-1}} I_{q_J(\alpha)}	
						\to\dots\to\bigoplus_{\substack{J\subseteq S_{\alpha},\\ |J| = r-k}} I_{q_J(\alpha)} \overset{\partial_{r-k}}{\to}\dots\to I_{\alpha}\to 0.
\end{equation}									
\end{Def}

For any right enhanced partition $\alpha$, the complex $\mathcal{I}_{\alpha}$ is an injective resolution for some module. Its homology is concentrated in degree $|S_{\alpha}|$. To compute the homology, define the map
	\begin{equation}\label{eq:DefG}
	g : (0^{\alpha_0}| \lambda_1^{\alpha_1}, \dots, \lambda_r^{\alpha_r}) \mapsto (\lambda_1^{\alpha_0+1}, 					\lambda_2^{\alpha_1}, \dots, \lambda_r^{\alpha_{r-1}}|n^{\alpha_r - 1})
	\end{equation}
which is a counterpart to the map $f$, and set the following enhancement on the partition $q_{S_{\alpha}}(\alpha)$
	\begin{equation}\label{eq:deltaEnhancement}
	\begin{cases}
	(0^{\alpha_1}|(\lambda_2 - 1)^{\alpha_2}, \dots, (\lambda_r-1)^{\alpha_r}, n^{\alpha_{r+1}}) & \text{if } \lambda_1 = 0\\
	(|(\lambda_1 - 1)^{\alpha_1}, \dots, (\lambda_r-1)^{\alpha_r}, n^{\alpha_{r+1}}) & \text{otherwise.}
	\end{cases}
	\end{equation}
We denote $\delta$ the map sending $\alpha$ to this enhancement of $q_{S_{\alpha}}(\alpha)$. Then the support of the homology of $\mathcal{I}_{\alpha}$ in degree $|S_{\alpha}|$ is the interval
\begin{equation}\label{eq:IntIm}
[\delta(\alpha), g\circ \delta(\alpha)].
\end{equation}
The map $\delta$ amounts to representing the corresponding configuration in an abacus using $\phi_r$, shift all its beads one step to the left and interpret the configuration using $\phi_l$. The maps $f$ and $g$ are further related by the following lemma. 

\begin{lem}\label{lem:fginverse}
The functions $f$ and $g$ are inverse of one another.
\end{lem}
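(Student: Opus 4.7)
The plan is to translate both maps into the abacus formalism and show that they are each other's tautological inverses there. Specifically, I would first establish the two identities
\[
\phi_l \circ f = \phi_r \text{ on } E^R_{m,n}, \qquad \phi_r \circ g = \phi_l \text{ on } E^L_{m,n}.
\]
Since $\phi_r \colon E^R_{m,n} \to C_{m,n}$ and $\phi_l \colon E^L_{m,n} \to C_{m,n}$ are bijections onto the same set of configurations, these identities immediately yield $f = \phi_l^{-1} \circ \phi_r$ and $g = \phi_r^{-1} \circ \phi_l$, so $f$ and $g$ are mutual inverses.

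To verify the first identity, I would unfold the definitions. Fix a right enhanced partition $\alpha = (\lambda_1^{\mu_1}, \dots, \lambda_r^{\mu_r} \mid n^{\mu_{r+1}})$. The positive sides of $\phi_r(\alpha)$ and $\phi_l(f(\alpha))$ both read off the non-zero coefficients $\lambda_1, \dots, \lambda_r$ of $\alpha$ (with $n$ adjoined when $\lambda_r \neq n$ and $\mu_{r+1} > 0$), so they agree. For the negative sides, the key arithmetic observation is that the ending indices $x_i = \mu_1 + \dots + \mu_i$ of $\alpha$, which are precisely the ones removed by $\phi_r$, coincide term-by-term with the starting indices of the non-zero coefficients of $f(\alpha) = (0^{\mu_1-1} \mid \lambda_1^{\mu_2}, \dots, \lambda_r^{\mu_{r+1}})$, which are the ones removed by $\phi_l$. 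The second identity $\phi_r \circ g = \phi_l$ follows from a symmetric calculation: the shifts $\alpha_0 + 1$ in the first multiplicity and $\alpha_r - 1$ in the last of $g(\beta)$ precisely line up the starting indices of a left enhanced partition with the ending indices of its $g$-image.

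The main obstacle will be bookkeeping at the boundary, namely how the special cases $\mu_1 = 1$ (so that $f(\alpha)$ has no zeros before the bar), $\alpha_r = 1$ (so that $g(\beta)$ has no $n$'s after the bar), $\lambda_1 = 0$, or $\lambda_r = n$ interact with the extension conventions for $\phi_r$ and $\phi_l$ at the extreme beads $-m$ and $-1$. The abacus viewpoint handles these uniformly, because the $\pm 1$ shifts in the defining formulas of $f$ and $g$ correspond exactly to the presence or absence of the distinguished beads at $-m$ and $-1$ that differentiate right-enhanced from left-enhanced configurations.
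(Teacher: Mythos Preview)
Your approach is correct but differs from the paper's. The paper proves the lemma by a direct symbolic computation: it writes out $g\circ f$ and $f\circ g$ on a generic enhanced partition and checks that the multiplicity shifts cancel. Your argument instead factors both maps through the configuration bijections, establishing $\phi_l\circ f=\phi_r$ and $\phi_r\circ g=\phi_l$ independently and reading off the inverse relation from those.

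What this buys you is a more conceptual proof: once both identities are in hand, the statement is automatic and the boundary bookkeeping ($\mu_1=1$, $\alpha_r=1$, etc.) is absorbed into the abacus conventions rather than tracked by hand. It also reorganizes the paper's logical flow: in the paper, Proposition~\ref{fact:fShift} proves $f=\phi_l^{-1}\circ\phi_r$ directly but then \emph{deduces} the analogous statement for $g$ from the present lemma. You are proposing to prove both halves of Proposition~\ref{fact:fShift} directly and obtain Lemma~\ref{lem:fginverse} as a corollary, which is perfectly legitimate and arguably cleaner. The paper's direct computation, on the other hand, is shorter and requires no reference to the abacus formalism at all.
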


\begin{proof}
To see that it suffices to compute both compositions

	\begin{align*}
	g\circ f : (\lambda_1^{\alpha_1}, \dots, \lambda_r^{\alpha_r} | n^{\alpha_{r+1}}) &\mapsto (0^{\alpha_1-1}|, 		\lambda_1^{\alpha_2}, \dots, \lambda_{r-1}^{\alpha_r}, \lambda_r^{\alpha_{r+1}+1})\\
	&\mapsto (\lambda^{\alpha_1}, \lambda_2^{\alpha_2}, \dots, \lambda_r^{\alpha_r}|n^{\alpha_{r+1}+1 - 1})
	\end{align*}

and, almost dually we have : 

	\begin{align*}
	f\circ g : (0^{\alpha_0}| \lambda_1, \dots, \lambda_r^{\alpha_r}) &\mapsto (\lambda_1^{\alpha_0 +1}, 			\lambda_2^{\alpha_1}, \dots, \lambda_r^{\alpha_{r-1}} | n^{\alpha_r - 1})\\
	&\mapsto (0^{\alpha_0 +1 -1}| \lambda_1, \dots, \lambda_{r-1}^{\alpha_{r-1}}, \lambda_r^{\alpha_r -1 + 1}).
	\end{align*}
\end{proof}
The key for Y\i ld\i r\i m's theorem is the following proposition which mirrors the combinatorics of Proposition 4.2 in \cite{Yildirim_2018} but with slightly different combinatorial objects.
\begin{prop} \label{fact:fShift}\label{fact:gShift}
On the abacus, the map $f$ is a shift to the left of the negative beads or is simply $f(\alpha) = \phi^{-1}_l\circ \phi_r(\alpha)$. Conversely $g$ is the reinterpretation of the configuration using $\phi_r$ instead of $\phi_l$ \emph{i.e.} $g(\alpha) = \phi^{-1}_r\circ \phi_l(\alpha)$.
\end{prop}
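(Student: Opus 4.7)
My plan is a direct combinatorial verification that $\phi_l(f(\alpha)) = \phi_r(\alpha)$ for any right-enhanced partition $\alpha$; the statement for $g$ then follows formally from Lemma \ref{lem:fginverse}.

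The first step is to unpack the two configurations from their definitions. By construction, $\phi_r(\alpha)$ places positive beads at the distinct coefficients $\lambda_1, \ldots, \lambda_r$ of $\alpha$ and kept negative beads at the complement in $\{-m, \ldots, -1\}$ of the negated ending indices $\{-x_1, \ldots, -x_r\}$, where $x_i = \mu_1 + \cdots + \mu_i$. Similarly, $\phi_l(f(\alpha))$ places positive beads at the distinct coefficients of the partition $f(\alpha)$ and kept negative beads at the complement of the negated starting indices of $f(\alpha)$. The proposed identity therefore splits into two claims: (i) the positive parts of the two configurations coincide, and (ii) the starting indices of $f(\alpha)$ coincide with the ending indices of $\alpha$.

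The second step is to establish (i) and (ii) from the explicit description of $f(\alpha)$ provided by Proposition \ref{prop:f}. That formula shifts the multiplicities by one: the multiplicity attached to $\lambda_{i+1}$ in $\alpha$ becomes the multiplicity attached to $\lambda_i$ in $f(\alpha)$, with appropriate boundary adjustment at the leading zeros and at the trailing maximal value. Claim (i) is then immediate from the formula. For claim (ii), I would unfold the $i$-th starting index of $f(\alpha)$ as $1 + \mu_0^{f(\alpha)} + \mu_1^{f(\alpha)} + \cdots + \mu_{i-1}^{f(\alpha)}$ and observe that after the shift, this telescoping sum collapses to $\mu_1 + \cdots + \mu_i = x_i$.

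The second part of the proposition is then a purely formal consequence of the first and Lemma \ref{lem:fginverse}. Substituting $\beta = g(\alpha)$ into $f(\beta) = \phi_l^{-1}\circ\phi_r(\beta)$ and using $f\circ g = \mathrm{id}$, we obtain $\alpha = \phi_l^{-1}\circ\phi_r(g(\alpha))$, which rearranges to $g(\alpha) = \phi_r^{-1}\circ\phi_l(\alpha)$.

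The main obstacle is the bookkeeping arising from whether $\lambda_1(\alpha) = 0$. In this case, the leading zero of $\alpha$ is absorbed into the $\mu_0$-enhancement of $f(\alpha)$, which shifts the indexing of the positive coefficients by one; this is, however, precisely matched on the abacus side by the presence or absence of a $0$-bead among the positive beads, so that the identification remains intact across both cases once one phrases everything on the configuration side.
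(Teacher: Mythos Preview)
Your proposal is correct and follows essentially the same approach as the paper's proof: both verify that the positive beads coincide (same plain coefficients), establish the key identity that the starting indices of $f(\alpha)$ equal the ending indices of $\alpha$ (the paper writes this tersely as $x^{f(\alpha)}_{i-1}+1 = x_i^{\alpha}$, while you unfold the telescoping sum explicitly), and then derive the statement for $g$ from Lemma~\ref{lem:fginverse}. Your treatment is simply more detailed, including the $\lambda_1 = 0$ bookkeeping, which the paper leaves implicit.
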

\begin{proof}Both $\alpha$ and $f(\alpha)$ have the same \emph{plain} coefficients $\lambda_1, \dots, \lambda_r$ of $\alpha$ hence their respective abaci have the same positive beads regardless of the map used. To see that they coincide in the negative side notice that 
\[x^{f(\alpha)}_{i-1}+1 = x_{i}^{\alpha}\]
for $i\in S_{\alpha}$. The result on $g$ follows from Lemma \ref{lem:fginverse}.
\end{proof}
More importantly we can now show that Y\i ld\i r\i m's family of intervals is stable under the Serre functor.
Write $\tilde{f} = g\circ \delta$.
\begin{prop}\label{prop:ProjToInjIso} 
Let $\alpha$ be a left enhanced partition. Then $\Serre(\mathcal{P}_{\alpha}) \simeq \mathcal{P}_{\tilde{f}(\alpha)}[|S_{\alpha}|]$.
\end{prop}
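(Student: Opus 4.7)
The plan is to show that both $S(\PRes_\alpha)$ and $\PRes_{\tilde{f}(\alpha)}[|S_\alpha|]$ are isomorphic in $D^b(\IncAlg)$ to the same shifted interval module; once that is in hand, the proposition follows immediately by transitivity.

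First I would unpack the right-hand side. Proposition \ref{prop:f} identifies $\PRes_{\tilde{f}(\alpha)}$ as a projective resolution of the interval $[f(\tilde{f}(\alpha)), \tilde{f}(\alpha)]$. Since $\tilde{f} = g \circ \delta$ and $f \circ g = \mathrm{id}$ by Lemma \ref{lem:fginverse}, one obtains $f(\tilde{f}(\alpha)) = f(g(\delta(\alpha))) = \delta(\alpha)$. Hence $\PRes_{\tilde{f}(\alpha)}[|S_\alpha|]$ is quasi-isomorphic in $D^b$ to the interval module $[\delta(\alpha), \tilde{f}(\alpha)]$ placed in the cohomological degree dictated by the shift.

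Next I would analyse the left-hand side. Because $\PRes_\alpha$ is a complex of projectives, the derived Serre functor acts termwise, sending each $P_{q_J(\alpha)}$ to $I_{q_J(\alpha)}$ and producing the complex $\mathcal{I}_\alpha$ displayed in (\ref{eq:InjRes}). By the identification recorded in (\ref{eq:IntIm}), the cohomology of $\mathcal{I}_\alpha$ is concentrated in a single degree and equals the interval module $[\delta(\alpha), g(\delta(\alpha))] = [\delta(\alpha), \tilde{f}(\alpha)]$. Matching the cohomological degrees on both sides then yields the desired isomorphism.

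The main obstacle is the ``dual'' verification underpinning (\ref{eq:IntIm}): one must show that $\mathcal{I}_\alpha$ really is an injective coresolution of the claimed interval with cohomology concentrated in a single degree. My strategy would be to dualise the argument of Proposition \ref{prop:f}, tracking the support description analogous to (\ref{eq:SuPalpha}) through the abacus reinterpretation $\phi_r \leftrightarrow \phi_l$ of Proposition \ref{fact:gShift}, and exploiting that $C_\alpha$ is boolean (Proposition \ref{prop:YildBool}) together with Theorem \ref{lem:homsBool} to confine the cohomology of $\mathcal{I}_\alpha$ to a single degree. The combinatorial heart of the matter — that a bead-shift on the abacus converts the projective data $(f(\alpha),\alpha)$ of $\PRes_\alpha$ into the injective data $(\delta(\alpha), \tilde{f}(\alpha))$ of $\mathcal{I}_\alpha$ — is exactly what is encoded by the relation $\tilde{f} = g \circ \delta$ combined with $f \circ g = \mathrm{id}$.
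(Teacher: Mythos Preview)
Your proposal is correct and follows essentially the same approach as the paper: both arguments identify each side with the shifted interval $[\delta(\alpha),\tilde f(\alpha)]$ via Proposition~\ref{prop:f}, equation~(\ref{eq:IntIm}), and the inversion $f\circ g=\mathrm{id}$ from Lemma~\ref{lem:fginverse}. The only difference is emphasis: the paper treats the description~(\ref{eq:IntIm}) of the cohomology of $\mathcal{I}_\alpha$ as already established in the preceding paragraph and simply recalls it, whereas you flag its verification as the main obstacle and sketch the dualisation argument.
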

\begin{proof} Recall that $\Serre(\mathcal{P}_{\alpha})$ is isomorphic to the interval $[ q_{S_{\alpha}}(\alpha), g(q_{S_{\alpha}}(\alpha))]$ shifted by $|S_{\alpha}|$. But because $f$ and $g$ are inverse of one another this is the interval 
	$$[f(\tilde{f}(\alpha)), \tilde{f}(\alpha)] $$
which itself is isomorphic to $\mathcal{P}_{\tilde{f}(\alpha)}$ by equation (\ref{eq:ProjInt}). This gives us the result.
\end{proof}
Hence describing the action of the Serre functor on the Y\i ld\i r\i m modules amounts to describing the action of $\tilde{f}$ on abaci. This turns out to be quite simple. We recover the same transformation as in the proof of Proposition 4.2 in  \cite{Yildirim_2018} but with different intermediate maps.
\begin{rem} \label{lem:Shift}
The map $\tilde{f}$ acts on right abaci associated to right enhanced partitions as a shift to the left. This is clear form the description in terms of abaci of $g$ in Proposition \ref{fact:gShift} and of $\delta$ below equation (\ref{eq:IntIm}).
\end{rem}
\begin{proof}[Proof of Proposition \ref{prop:YildirimThm}]Remark \ref{lem:Shift} implies that $\tilde{f}^{n+m+1}$ is the identity on configurations hence on partitions meaning that $S^{m+n+1}(\mathcal{P}_{\alpha})$ is isomorphic to $\mathcal{P}_{\alpha}$ with a certain shift. To compute that shift, recall that applying the Serre functor to $\mathcal{P}_{\alpha}$ shifts the resolution by $|S_{\alpha}|$ to the left so the total shift is: 
$$\sum_{i=1}^{m+n+1} |S_{\tilde{f}^{i}(\alpha)}|.$$
But $|S_{\alpha'}|$ is the number of non zero beads in the positive side of the right abacus associated to $\alpha'$. When applying $\tilde{f}$ to $\alpha$ a total of $n+m+1$ times, each bead will be in the positive side exactly $n$ times. Because there are $m$ beads, this means the shift is $nm$.
\end{proof}

\begin{proof}[Proof of Theorem \ref{thm1}]Let us check that the family 
	$$\{\PRes_{\alpha} | \alpha \text{ enhanced } (m,n) \text{-partition} \}$$
satisfies the three conditions of Theorem \ref{claim:Gen}. First, no matter the enhancement of a partition $\alpha$, the projective cover of the interval $[f(\alpha), \alpha]$ is the indecomposable projective $P_{\alpha}$. Theorem \ref{prop:YildirimThm} gives the second condition. The third condition follows from Proposition \ref{prop:YildBool} which concludes the proof.
\end{proof}
\section{Tilting to higher Auslander algebras of type $A$}
\label{section3}

We now only consider the partitions that satisfy $\mu_{r+1} = 0$. They are the elements of $J_{m,n}$.
\begin{prop}\label{prop:thicc} The category $\Thicc(\{\PRes_{\alpha} | \alpha\in J_{m,n}\})$ is the category of perfect complexes.
\end{prop}
\begin{proof} In the proof of the main result of Section \ref{section4} we constructed by induction each projective module as the end of a sequence of cones starting in  $\mathcal{Y}_{m,n}$. The argument  only relied on the fact that each projective had a quotient belonging to the family $(\PRes_{\alpha})_{\alpha}$. Restricting the family to the modules associated with plain partitions 
proves the claim. 
\end{proof}
The goal of the rest of the section is to prove Theorem (\ref{thm:DerEqTypeA}). We start by describing morphisms and extensions between the antichain modules $\PRes_{\alpha}$. After discussing relations between the morphisms we construct a tilting complex and compute its endomorphism algebra. Its indecomposable components will be carefully shifted antichains modules $\PRes_{\alpha}$. 
\subsection{Describing Homspaces}
\label{subsec:homsets1}
\begin{wrapfigure}[15]{r}{.4\textwidth}\centering
\includegraphics[scale = 0.5]{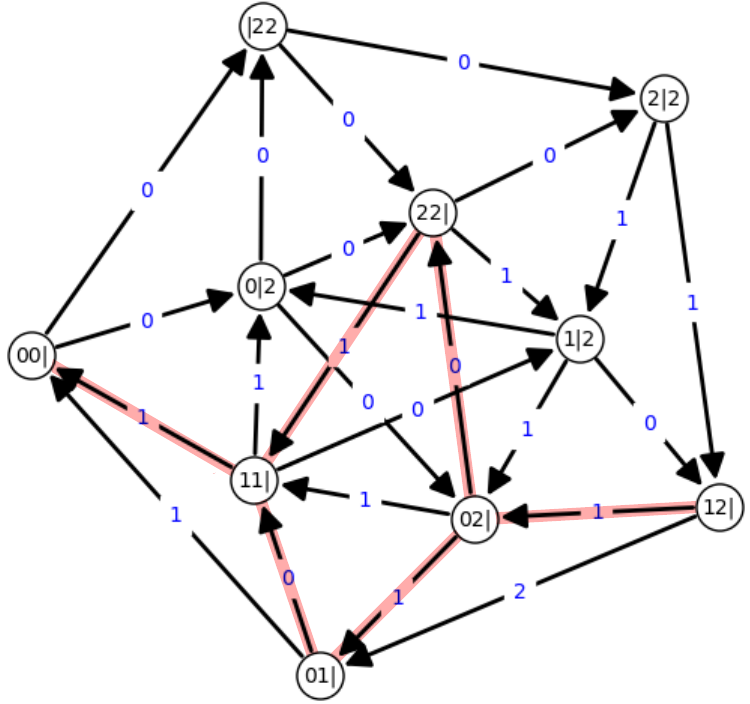}
\caption{Graph of $\mathcal{Y}_{2,2}$ and more}
\label{fig:hom-graph_2_2}
\end{wrapfigure}
We define the category $\mathcal{Y}_{m,n}$ to be the full subcategory of $D^b(J_{m,n})$ whose objects are the objects $\PRes_{\alpha}$ and all their shifts where $\alpha$ is a plain partition. In this subsection we describe morphisms between objects and to identify irreducible morphisms in the category $\mathcal{Y}_{m,n}$. These include morphisms of modules which are concentrated in degree zero as well as morphisms between shifted objects which are in fact extensions. Given a morphism in $\mathcal{Y}_{m,n}$, we will first factor it through an extension of the same degree but which we can easily describe using our antichains. This factorisation yields a degree zero map. We then decompose further these two components. Starting the process of our factorisation with the extension yields a form of asymmetry between morphisms and extensions. It is an artefact of this proof which will be smoothed out in the next subsection.

As a visual source of examples, see Figure \ref{fig:hom-graph_2_2}. It shows morphisms between the objects of the previous subsection. The category $\mathcal{Y}_{2, 2}$ is highlighted. Labels on the arrows indicate the degree in which the morphisms are concentrated. Note that relations do not appear on the figure: many compositions of arrows are in fact zero. For the description of the category spanned by the objects associated to all enhanced partitions see \cite[Sections 5.1 and 5.2]{gottesman2024these}.

Let $\alpha = (\lambda_1^{\mu_1}, \dots, \lambda_r^{\mu_r})$ and $\beta = (l_1^{m_1}, \dots, l_s^{m_s})$ be partitions. In this section we will consistently use the following convention for the ending indices of the coefficients of $\alpha$ and $\beta$ 
\[x_i = \sum_{k\leq i} \mu_k \text{ and } y_j = \sum_{k\leq j} m_k\]
to distinguish data from the two partitions as well as the indices that we might be looking at any given moment. Recall from Theorem \ref{lem:homsBool} that there exists at most one integer $p$ such that 
\begin{equation*}
\dim \Hom (\PRes_{\alpha}, \PRes_{\beta}[p]) \not= 0.
\end{equation*}
If it exists then $\dim \Hom (\PRes_{\alpha}, \PRes_{\beta}[p]) = 1$. This is the case \emph{if and only if} there exists a unique subset $J$ of $S_{\alpha}$ such that $q_J(\alpha) \in [f(\beta), \beta]$. In that case, $p = |J|$. When $p = 0$, we can give different characterisations of this property.

\begin{prop}\label{lem:hom0}The following are equivalent.
	\begin{enumerate}
		\item\label{item:1}There exists a non zero morphism $\phi : \PRes_{\alpha}\to\PRes_{\beta}$.
		\item\label{item:2}The inequalities $f(\alpha)\leq f(\beta)\leq \alpha\leq\beta$ hold.
		\item\label{item:3}The partition $\alpha$ is in $[f(\beta), \beta]$ and for all non empty $J\subseteq S_{\alpha}$ we have \[q_J(\alpha)\not\in [f(\beta),\beta].\]
		\item\label{item:4}The partition $\alpha$ is in $[f(\beta), \beta]$ and $\{\lambda_i |i\in S_{\alpha}\} \subseteq \{l_j|j\in S_{\beta}\}$.
		\item\label{item:5}For all $j\in S_{\beta}$ there exists $i\in S_{\alpha} \cup \{r+1\}$ such that $\lambda_i = l_j$ and $x_{i-1} < y_j \leq x_i < y_{j+1}$ and $\{\lambda_i |i\in S_{\alpha}\} \subseteq \{l_j|j\in S_{\beta}\}$.
	\end{enumerate}
\end{prop}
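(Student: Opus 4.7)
The plan is to use Theorem \ref{lem:homsBool} as the bridge from the algebraic condition (1) to a combinatorial one, then prove (3)$\Leftrightarrow$(2)$\Leftrightarrow$(4)$\Leftrightarrow$(5) by progressively unpacking the map $f$ and the operators $q_J$. The antichain $C_\alpha$ is boolean by Proposition \ref{prop:YildBool} and $\PRes_\beta$ represents the interval $[f(\beta), \beta]$ by Proposition \ref{prop:f}, so Theorem \ref{lem:homsBool} yields that $\Hom(\PRes_\alpha, \PRes_\beta[p])$ is nonzero exactly when there is a unique $J \subseteq S_\alpha$ with $q_J(\alpha) \in [f(\beta), \beta]$, in which case $p = |J|$. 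Taking $p = 0$ forces $J = \emptyset$, which is exactly condition (3).

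The combinatorial core is (3)$\Leftrightarrow$(2). Writing out $\alpha \in [f(\beta), \beta]$ gives $f(\beta) \leq \alpha \leq \beta$. Since $q_J(\alpha) \leq q_{\{i\}}(\alpha)$ for any $i \in J$, the clause ``no nonempty $J$ works'' reduces to ``$q_{\{i\}}(\alpha) \not\geq f(\beta)$ for every $i \in S_\alpha$''. Because $q_{\{i\}}(\alpha)$ differs from $\alpha$ only on the block $(x_{i-1}, x_i]$, where its value is $\lambda_i - 1$, and $f(\beta)$ is non-decreasing with $f(\beta)_k \leq \alpha_k = \lambda_i$ on that block, this failure is equivalent to the single equality $f(\beta)_{x_i} = \lambda_i$. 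By the support description in equation (\ref{eq:SuPalpha}), $f(\alpha)$ is the minimum non-decreasing $\gamma \leq \alpha$ with $\gamma_{x_i} = \lambda_i$ for all $i \in S_\alpha$, so the system ``$f(\beta) \leq \alpha$ and $f(\beta)_{x_i} = \lambda_i$ for $i \in S_\alpha$'' is exactly $f(\alpha) \leq f(\beta) \leq \alpha$.

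The last two equivalences are bookkeeping anchored on the same identity. For (2)$\Leftrightarrow$(4), the nonzero values of $f(\beta)$ are $\{l_j : j \in S_\beta\}$ by direct inspection of the formula defining $f$, so the equality $f(\beta)_{x_i} = \lambda_i$ supplies the subset condition; conversely, if (4) holds and $\lambda_i = l_j$, then at position $y_j$ the values $f(\beta)_{y_j} = l_j = \beta_{y_j}$ squeeze $\alpha_{y_j}$ to equal $\lambda_i$, placing $y_j$ in $\alpha$'s block $(x_{i-1}, x_i]$, so $y_j \leq x_i$ and $f(\beta)_{x_i} \geq f(\beta)_{y_j} = \lambda_i$, while the opposite inequality comes from $f(\beta) \leq \alpha$. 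For (4)$\Leftrightarrow$(5), the same squeezing argument upgrades the subset condition into the positional inequalities $x_{i-1} < y_j \leq x_i < y_{j+1}$, with the index $i = r+1$ absorbing the case $l_j = n$ which falls past $\alpha$'s last nonzero block.

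The main difficulty I anticipate is not any individual implication but keeping the two index sets $(x_i)_{i \in S_\alpha}$ and $(y_j)_{j \in S_\beta}$ straight across all comparisons, because every step amounts to evaluating $\alpha$, $\beta$ and $f(\beta)$ at very specific positions. The identity $f(\beta)_{x_i} = \lambda_i$ for $i \in S_\alpha$ is the hinge linking all five formulations.
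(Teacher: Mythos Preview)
Your proposal is correct and follows essentially the same approach as the paper. The only organisational difference is that the paper gets (\ref{item:1})$\Leftrightarrow$(\ref{item:2}) in one stroke from the interval-morphism formula (\ref{eq:morphIntS}) and then closes the cycle via (\ref{item:3})$\Rightarrow$(\ref{item:4})$\Rightarrow$(\ref{item:2}), whereas you route through (\ref{item:3})$\Leftrightarrow$(\ref{item:2}) directly; the combinatorial content---reducing to singletons $q_{\{i\}}(\alpha)$ and pivoting on the identity $f(\beta)_{x_i}=\lambda_i$---is the same in both.
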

\begin{proof}
\begin{description}[leftmargin=0cm]
\item[\protect{(\ref{item:1}) $\Leftrightarrow$ (\ref{item:2})}] is the characterisation of morphisms between intervals. See equation (\ref{eq:morphIntS}).
\item[\protect{(\ref{item:1}) $\Leftrightarrow$ (\ref{item:3})}] follows from Theorem \ref{lem:homsBool}.
\item[\protect{(\ref{item:4}) $\Leftrightarrow$ (\ref{item:5})}] Reformulate (\ref{item:4}) using the ideas of the proof of Proposition \ref{prop:f} which we reproduce here. Recall that the interval $[f(\beta), \beta]$ is the set of partitions 
\[\{\gamma | \gamma\leq \beta \text{ and for all }j\in S_{\beta},  \gamma\not\leq q_j(\beta)\}.\]Because the partition $q_j(\beta)$ differs from $\beta$ between indices $y_{j-1}+1$ and $y_j$, this is equivalent to the existence of $k\in [y_{j-1}+1, y_j]$ such that $l_j -1 <a_k = b_k = l_j$. Equivalently, there exists $i\in S_{\alpha}$ such that $a_k = \lambda_i = l_j = b_k$. The equality $a_k = b_k$ is equivalent to saying there is an overlap of the occurrences of the value $l_j = \lambda_i$ in $\beta$ and $\alpha$. This translates into the interlacing  $x_{i-1} < y_j \leq x_i < y_{j+1}$. 
\end{description}
\noindent To complete the proof we argue that (\ref{item:3}) $\Rightarrow$ (\ref{item:4}) $\Rightarrow$ (\ref{item:2}).
\begin{description}[leftmargin=0cm]
\item[\protect{(\ref{item:3}) $\Rightarrow$ (\ref{item:4})}]Assume that for all $i\in S_{\alpha}$, $q_i(\alpha)\not\in [f(\beta), \beta]$. Thus there exists $j\in S_{\beta}$ such that $(q_i(\alpha))_{y_j} < l_j$. Because $\alpha$ and $q_i(\alpha)$ differ only for indices between $x_{i-1}+1$ and $x_i$ we conclude that $\lambda_i = l_j$. The inclusion $\{\lambda_i |i\in S_{\alpha}\} \subseteq \{l_j|j\in S_{\beta}\}$ follows.
\item[\protect{(\ref{item:4}) $\Rightarrow$ (\ref{item:2})}] 
Remember that in $f(\beta)$, the value $l_j$ runs from $y_j$ to $y_{j+1}-1$. Hence, the interlacing condition implies that $(f(\beta))_{x_i} = \lambda_i$ and $f(\beta)\in[f(\alpha), \alpha]$ because for all $i\in S_{\alpha}$, we have $q_i(\alpha)\not\leq f(\beta)$.  
\end{description}
\end{proof}
To describe some degree zero morphisms, we introduce new transformations on $J_{m,n}$.
\begin{Def}\label{def:allowedDual}
Let $\alpha = (\lambda_1^{\mu_1}, \dots, \lambda_r^{\mu_r})$ be an enhanced partition. Let $i$ be in $\{1, \dots, r-1\}$. If $\mu_i > 1$ define the partition $p_i(\alpha) = (\lambda_1^{m_1}, \dots, \lambda_r^{m_r})$ with multiplicities
\begin{equation*}
m_j = \begin{cases}\mu_j - 1 & \text{if } j=i,\\
			\mu_j + 1 &\text{if } j = i+1,\\
			\mu_j &\text{otherwise.}
	\end{cases}
\end{equation*}
There is a special case when $i = 1$, $\mu_1 = 1$ and $\lambda_1 = 0$. We still define the partition $p_1$ in that case by
\begin{equation*}
p_1(\alpha) = p_1(0^1, \lambda_2^{\mu_2}, \dots, \lambda_r^{\mu_r}) = (\lambda_2^{\mu_2+1}, \lambda_3^{\mu_3}, \dots, \lambda_r^{\mu_r}).
\end{equation*}
 The resulting partition has one coefficient less than $\alpha$ which reduces by one the index of the coefficients which remain in the partition. This will lead to technical difficulties in some proofs. In these two cases we say that $p_i$ is \defn{well defined}.
\end{Def}
Where $q_j$ changes the values of the partition, $p_i$ acts on its multiplicities.
\begin{wrapfigure}[8]{r}{0.5\textwidth}\centering
		\begin{tikzpicture}[scale = 1.5]
			\draw[step = 0.5cm, color = gray](0,0) grid (3.5,2);
			\draw[thick, dotted](0, 0)--(0, 0.5)--(0.5, 0.5);
			\draw[thick, dotted](2, .5)--(2, 1.5)--(2.5, 1.5);
			\draw[thick, dotted](3.5, 2)--(3, 2)--(3, 1.5);
			\draw[thick](0.5, 0)--(0.5, 0.5)--(2.5, 0.5)--(2.5, 1)--(2.5, 1.5)--(3.5, 1.5) -- (3.5, 2);
			\draw[red, ->](0.44, 0.25)--(0.06, 0.25);
			\draw[red, ->] (2.44, 1.25)--(2.06, 1.25);
			\draw[red, ->] (3.44, 1.75)--(3.06, 1.75);
			\filldraw (0.5, 0.5) circle (1pt);
			\filldraw (1, .5) circle (1pt);
			\filldraw (1.5, .5) circle (1pt);
			\filldraw (2, .5) circle (1pt);
			\filldraw (2.5, 1.5) circle (1pt);
			\filldraw (3, 1.5) circle (1pt);
			\filldraw (3.5, 2) circle (1pt);
			\filldraw (3.5, 2) circle (1pt);
		\end{tikzpicture}
		\caption{$p_1$, $p_2$ and $p_3$ on $\alpha = (0, 2^4, 4^2, 5)$}
\end{wrapfigure}
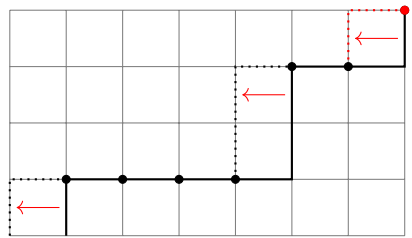
Using Proposition \ref{lem:hom0} and Definition \ref{def:allowedDual} we list some notable degree zero morphisms. 
\begin{cor}\label{nota:Hom0}Consider a right enhanced partition $\alpha$. There exists a non zero morphism $\PRes_{\alpha}\to\PRes_{\beta}$ whenever $\beta= p_i(\alpha)$ and $i\in \{1, \dots, r -1 \}$ such that $p_i$ is well defined.
\end{cor}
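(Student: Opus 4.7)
The plan is to apply Proposition \ref{lem:hom0}, specifically the equivalence (\ref{item:1}) $\Leftrightarrow$ (\ref{item:2}): a nonzero morphism $\PRes_\alpha\to\PRes_\beta$ in degree zero exists if and only if the chain $f(\alpha)\leq f(\beta)\leq \alpha\leq\beta$ holds between the underlying plain partitions. I will verify this chain in each of the three situations in which Definition \ref{def:allowedDual} makes $p_i$ well defined.

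For the generic case $i\in S_\alpha$ with $\mu_i>1$, the plain partition of $\beta=p_i(\alpha)$ agrees with that of $\alpha$ everywhere except at position $x_i$, where $\beta_{x_i}\geq\alpha_{x_i}=\lambda_i$ (the new value is $\lambda_{i+1}$, or $n$ if $i=r$ and $\lambda_r<n$, with equality when $\lambda_r=n$), so $\alpha\leq\beta$ is immediate. Since $p_i$ only alters the $i$-th and $(i+1)$-st multiplicities, the ending indices $y_j$ of $\beta$ coincide with the $x_j$ of $\alpha$ except at $j=i$, where $y_i=x_i-1$. Using the description (\ref{eq:SuPalpha}) of $f(\gamma)$ as the minimum of the interval it parametrises, a position-by-position comparison yields $f(\alpha)_k=f(\beta)_k$ for all $k\neq x_i-1$, while $f(\beta)_{x_i-1}=\lambda_i$ replaces $f(\alpha)_{x_i-1}\in\{0,\lambda_{i-1}\}$. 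Since $\mu_i\geq2$ forces $x_i-1\geq x_{i-1}+1$, position $x_i-1$ still lies in a $\lambda_i$-block of $\alpha$, hence $f(\beta)_{x_i-1}=\lambda_i=\alpha_{x_i-1}$, and the full chain closes.

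The two boundary cases are brief. When $i=1$, $\mu_1=1$ and $\lambda_1=0$, a direct computation using (\ref{eq:SuPalpha}) shows that $f(\alpha)$ and $f(\beta)$ have identical plain partitions $(0^{\mu_2},\lambda_2^{\mu_3},\ldots,\lambda_r^{\mu_{r+1}+1})$, while $\alpha$ and $\beta$ differ only at position $1$ with $\alpha_1=0<\lambda_2=\beta_1$. When $i=\star$, the partitions $\alpha$ and $\beta$ are equal as plain partitions, and the only effect of the new mutable coefficient $l_{r+1}=n$ is to enforce $f(\beta)_m=n$; thus $f(\alpha)_k=f(\beta)_k$ for $k<m$ while $f(\alpha)_m=\lambda_r<n=f(\beta)_m=\alpha_m$, and the chain is immediate.

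The main difficulty is purely bookkeeping: $p_i$ manipulates the data of a right-enhanced partition, whereas Proposition \ref{lem:hom0} compares plain partitions, so one must carefully translate each multiplicity shift into a localised change of the underlying non-decreasing sequence, while respecting the three boundary conventions $i=1$, $i=r$ and $i=\star$. Once this translation is in place, each case reduces to a one-position check via (\ref{eq:SuPalpha}).
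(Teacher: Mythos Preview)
Your proposal is correct and takes essentially the same approach as the paper: both verify item~(\ref{item:2}) of Proposition~\ref{lem:hom0} via the characterisation~(\ref{eq:SuPalpha}), with the paper phrasing the generic case as ``$\alpha\in[f(p_i(\alpha)),p_i(\alpha)]$ and $f(p_i(\alpha))\in[f(\alpha),\alpha]$'' while you do the equivalent position-by-position comparison. The paper likewise leaves the two boundary cases to the reader, so your explicit treatment of them is a small bonus.
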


\begin{proof}
Notice that if $p_i$ is well defined, then the ending indices of $\alpha$ and $p_i(\alpha)$ are the same except from $x_i^{\alpha} = x_i^{p_i(\alpha)} + 1$. Hence, if $\mu_i > 1$, we have $\alpha_{x_j^{\alpha} -1} = \lambda_i = p_i(\alpha)_{x_j^{p_i(\alpha)}}$ for all $j\in S_{\alpha}$. It follows that $\alpha \in [f(p_i(\alpha)), p_i(\alpha)]$ using the characterisation from equation (\ref{eq:SuPalpha}). We also know from Proposition \ref{prop:f} that $f(p_i(\alpha))$ and $p_i(\alpha)$ have the same values at indices $x_i^{p_i(\alpha)}$ from which we can deduce that $f(p_i(\alpha)) \in [f(\alpha), \alpha]$ using the characterisation equation (\ref{eq:SuPalpha}) again. We conclude with Proposition \ref{lem:hom0}(\ref{item:2}). If $i = 1$ and $\mu_1 = 1$ then none of the non zero ending indices change so the result holds. See Figure \ref{fig:hom0ex} for a visualisation of the intervals. In the figure, the continuous path in black and grey represent the partitions we are considering. They are each associated to a dashed path in the same colour that represents the minimum of their corresponding intervals. The interpolation of the intervals is seen through the succession of decreasing paths in the following order: plain black, plain grey, dashed black and finally dashed grey.
\end{proof}

We now record facts about the combinatorics of the $p_i$ transformations. The notation $J - 1$ refers to the set $\{j-1 |j\in J\}$. Many of the results that follow rely on this sort of manipulations.
\begin{lem}\label{lem:pJFacts}Let $J$ be a subset of $S_{\alpha}$, let $h, j$ be elements of $S_{\alpha}$ and let  $i, k$ be in $\{1, \dots, r\}$. The following assertions hold.
\begin{enumerate} 
\item\label{item:all5} If $p_i$ is well defined  and $i\not = 1$ or $\mu_1 > 1$ then $q_J(p_i(\alpha)) > q_J(\alpha)$ and for any $J' \subseteq S_{\beta}$ different from $J$ but of same cardinal, we have $q_{J'}(p_i(\alpha)) \not > q_J(\alpha)$.
\item\label{item:all6} If $i= 1$, $\mu_1 = 1$ and $\lambda_1 = 0$ then $q_{J-1}(p_i(\alpha)) > q_J(\alpha)$ and for any $J' \subseteq S_{\beta}$ different from $J-1$ but of same cardinal, we have $q_{J'}(p_i(\alpha)) \not > q_J(\alpha)$.
\item\label{item:all7} If both $h$ and $j$ are allowed for $\alpha$ we have $q_hq_j(\alpha) = q_jq_h(\alpha)$.
\item\label{item:all8} If both $p_i$ and $p_k$ are well defined for $\alpha$ we have $p_ip_k(\alpha) = p_kp_i(\alpha)$.
\end{enumerate}
\end{lem}	
\begin{proof}
\begin{enumerate}
\item The transformation $p_i$ is well defined and $i \not = 1$ or $\mu_i > 1$, so $\alpha$ and $p_i(\alpha)$ have the same coefficients $\lambda_1, \dots, \lambda_r$. Hence $q_J(\alpha)$ and $q_J(p_i(\alpha))$ have the same coefficients $\lambda'_1, \dots,  \lambda'_s$. They differ only at the index $x_i$ where $q_J(p_i(\alpha))_{x_i} = \lambda'_{i+1} > \lambda'_{i} = q_J(\alpha)_{x_i}$. Hence $q_J(p_i(\alpha)) \geq q_J(\alpha)$. Now take $J'\subseteq S_{p_i(\alpha)}$ of same cardinal as  $J$ but different from it and let $j$ be in $J'\setminus J$. The partitions $q_{J'}(p_i(\alpha))$ has value $\lambda_j - 1$ between the indices $x'_{j-1} +1$ and $x'_j$, where the values $x'_{j-1} +1$ and $x'_j$ vary depending on whether $i = j-1$, $i = j$ or neither. It is easy to check that in all three cases we have $x'_{j-1}+1 \leq x_j \leq x'_j$. At the same time partition $q_J(\alpha)$ has value $\lambda_j$ at index $x_j$. Hence $q_{J'}(p_i(\alpha)) \not\geq q_J(\alpha)$.
\end{enumerate}
The proofs of the remaining items are left for the reader hoping the one we gave provide enough intuition on how the combinatorics of partitions will be discussed in the rest of the paper.

\end{proof}

\begin{nota}
Let $\alpha$ and $\beta$ be as in Lemma \ref{lem:hom0}. Consider the composition of \defn{canonical} maps
\[P_{\alpha }\xhookrightarrow{\iota_{\alpha}^{\beta}}P_{\beta} \twoheadrightarrow [f(\beta), \beta].\]
By Lemma \ref{lem:hom0}(\ref{item:3}), for $j\in S_{\alpha}$ we have $q_j(\alpha)\not \in [f(\beta), \beta]$. Hence the map above sends generators of $N_C^{\alpha}$ to zero and this factors uniquely through $\PRes_{\alpha} \cong P_{\alpha} / N_{\alpha} \cong [f(\alpha), \alpha]$ providing us with one instance of non zero morphism which we denote $\prescript{0}{}{u_i}$. Because the hom space is one dimensional, any other non zero morphism is proportional to $\prescript{0}{}{u_i^{\alpha}}$.
\end{nota}

Up to homotopy, there exist a unique lift of the canonical morphism $\prescript{0}{}{u_i}$ along the antichain projective resolution $\PRes_{p_i(\alpha)}$.
\begin{lem} Let $i$ be in $\{1, \dots, r\}$ such that $p_i$ is well defined for $\alpha$. Then the lift of $\prescript{0}{}{u_i^{\alpha}}$ along the projective resolutions of $\PRes_{p_i(\alpha)}$ is made of monomorphisms in each degree and all the signs on the maps between indecomposable projective modules are positive.
\end{lem}
\begin{proof}
Denote $\phi_k$ the degree $k$ component of the lift of $\prescript{0}{}{u_i^{\alpha}}$. Let $J$ be a subset of $S_{\alpha}$ of size $k$. If $i\not =1$ or $\mu_1 > 1$ then $J$ can also be seen as a subset of $S_{p_i(\alpha)}$. Applying Lemma \ref{lem:pJFacts}(\ref{item:all5}), the subset $J$ is then the only subset of $S_{p_i(\alpha)}$ satisfying $q_J(\alpha)\leq q_{J}(p_i(\alpha))$. We thus have
\[\phi_k = \oplus_{J\subseteq S_{\alpha}}c_{J} \iota_{q_J(\alpha)}^{q_J(p_i(\alpha))}\]
where $c_J\in \field$. If $i = 1$ and $\mu_1 = 1$, we apply Lemma \ref{lem:pJFacts} (\ref{item:all6}) to get that $J-1$ is indeed the only subset of size $k$ of $S_{p_i(\alpha)}$ for which $q_J(\alpha) \leq q_{J-1}(p_i(\alpha))$. This time we have
\[\phi_k = \oplus_{J\subseteq S_{\alpha}}c_{J} \iota_{q_{J-1}(\alpha)}^{q_J(p_i(\alpha))}.\]
One can then check that $c_J = 1$ makes the $\phi_k$ morphisms commute with the boundary maps. Hence we have an explicit lift of the canonical degree zero map which is a monomorphism in each degree.
\end{proof}

\begin{figure}[h!]
	\begin{minipage}{0.45\textwidth}\centering
		\begin{tikzpicture}[scale = 2]
			\draw[step = .5cm, thin, lightgray] (0, 0) grid (3, 2);
			\draw[thick] (0, .5) -- (1, .5) -- (1, 1) -- (2, 1) -- (2, 2) -- (3, 2);
			\draw[thick, dashed] (0, 0) -- (.5, 0) -- (.5, .5) -- (1.5, .5) -- (1.5, 1) -- (3, 1) -- (3, 2);
			\draw[thick, gray](0, 0.05) -- (.55, 0.05) -- (.55, .45) -- (1.05, .45)-- (1.05, 0.95) -- (2.05, .95) -- (2.05, 1.95) -- (3, 1.95);
			\draw[thick, gray, dashed](0, 0.05) -- (.55, .05) -- (.55, .45) -- (1.55, .45) -- (1.55, .95) -- (2.95, .95) -- (2.95, 2);
			\draw[->] (0.1, 0.25)--(0.4, 0.25);
		\end{tikzpicture}
		\caption*{$\prescript{0}{}{u_0^{\alpha}}$}\label{fig:hom0ex1}
	\end{minipage}
	\begin{minipage}{0.45\textwidth}\centering
		\begin{tikzpicture}[scale = 2]
			\draw[step = .5cm, thin, lightgray] (0, 0) grid (3, 2);
			\draw[thick] (0, .5) -- (1, .5) -- (1, 1) -- (2, 1) -- (2, 2) -- (3, 2);
			\draw[thick, dashed] (0, 0) -- (.5, 0) -- (.5, .5) -- (1.5, .5) -- (1.5, 1) -- (3, 1) -- (3, 2);
			\draw[thick, gray](0, 0.45) -- (1.55, .45)-- (1.55, 0.95) -- (2.05, .95) -- (2.05, 1.95) -- (3, 1.95);
			\draw[thick, gray, dashed](0, 0.05) -- (1.05, .05) -- (1.05, .45) -- (1.55, .45) -- (1.55, .95) -- (2.95, .95) -- (2.95, 2);
			\draw[->] (1.1, 0.75)--(1.4, 0.75);
		\end{tikzpicture}
		\caption*{$\prescript{0}{}{u_i^{\alpha}}$}\label{fig:hom0ex2}
	\end{minipage}
	\caption{Illustration of the interpolation condition for certain degree zero morphisms.}\label{fig:hom0}
	\label{fig:hom0ex}
\end{figure}
We will later show that any degree zero morphism decomposes into a composition of $\prescript{0}{}{u_i}$ morphisms.
However we first describe special extensions between $\PRes_{\alpha}$ objects using well chosen subsets $J$ of $S_{\alpha}$.
	\begin{Def}\label{def:allowedYild}
A subset $J$ of $S_{\alpha}$ is \defn{allowed} when for all $i\in J$ such that $i-1\in S_{\alpha}\setminus J$, we have $\lambda_{i-1}< \lambda_{i}-1$.
	\end{Def}
	\begin{ex}Consider the partition $\alpha = (0, 1^2, 3, 4^2, 5^2)$ with $S_{\alpha} = \{2, 3, 4, 5\}$ and $\mu_{r+1} = 0$. Then the subset $J = \{2, 3, 4\}$ and $\{2\}$ are allowed while $\{4\}$ and $\{5\}$ are not.
	\end{ex}
We leave the proof of following Lemma to the reader. It records some useful facts about allowed subsets and the combinatorics of the $p_i$ transformations.	
\begin{lem}\label{lem:allowedFacts}Let $J$ be a subset of $S_{\alpha}$, let $h, j$ be elements of $S_{\alpha}$ and let  $i, k$ be in $\{1, \dots, r\}$. The following assertions hold.
\begin{enumerate}
\item\label{item:all1} If $J$ is allowed, the partitions $\alpha$ and $q_J(\alpha)$ have the same multiplicities for non zero values.
\item\label{item:all2} If $J$ is allowed and $2\not \in J$ or $\lambda_2 > 1$, then for all $J\subseteq I\subseteq S_{\alpha}$ we have \[q_I(\alpha) = q_{I\setminus J}(q_J(\alpha)).\]
\item\label{item:all3} If $J$ is allowed and $\lambda_2 = 1$ and $2\in J$ then $q_I(\alpha) = q_{(I\setminus J) -1 }(q_J(\alpha))$.
\item\label{item:all4} If $J$ is allowed, let $I$ and $J$ be subsets of $S_{\alpha}$ and $S_{q_J(\alpha)}\setminus J$ respectively, of cardinal $k$ and $k-|J|$ respectively. If $2\not \in J$ or $\lambda_2 > 1$ assume $I'\not = I \setminus J$. Otherwise $I'\not = (I\setminus J) -1$. Then the partitions $q_I(\alpha)$ and $q_I'(q_J(\alpha))$cannot be compared. 
\end{enumerate}
\end{lem}	
	\begin{prop}\label{fact:PurExt} There is a non zero morphism from $\PRes_{\alpha}$ to $\PRes_{q_J(\alpha)}[i]$ in the derived category if and only if $J$ is allowed and $|J| = i$.
	\end{prop}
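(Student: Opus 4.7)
The plan is to reduce the statement to a combinatorial computation on the contributing set from Lemma \ref{lem:Eint}. Because $C_\alpha$ is boolean (Proposition \ref{prop:YildBool}), the isomorphism (\ref{eq:isoKoszulComplex}) identifies the total hom complex $\Hom^{\bullet}(\PRes_\alpha, [f(q_J(\alpha)), q_J(\alpha)])$ with a tensor product of acyclic two-term complexes, up to shift. Its cohomology vanishes unless the set
\[E = \{K \subseteq S_\alpha \mid q_K(\alpha) \in [f(q_J(\alpha)), q_J(\alpha)]\}\]
is a singleton $\{K_0\}$, in which case it is one-dimensional and concentrated in degree $|K_0|$. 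Since $J \in E$ automatically (the target interval contains its upper endpoint $q_J(\alpha)$), the proposition reduces to the equivalence: $E = \{J\}$ if and only if $J$ is allowed.

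First I would treat the forward direction. Assuming $J$ is allowed, Lemma \ref{lem:allowedFacts}(\ref{item:all1}) guarantees that the multiplicities of non-zero coefficients are preserved when passing from $\alpha$ to $q_J(\alpha)$, so the relevant ending indices of $q_J(\alpha)$ coincide with those $x_j$ of $\alpha$ (with a harmless re-indexing in the special case $\lambda_2 = 1$ with $2 \in J$, which is covered by item (\ref{item:all3})). Applying the explicit characterization of the support from (\ref{eq:SuPalpha}), the condition $K \in E$ becomes $q_K(\alpha)_{x_j} = \lambda_j'$ at every relevant index $j$; since $q_K(\alpha)_{x_j}$ equals $\lambda_j - 1$ precisely when $j \in K$ and $\lambda_j'$ follows the analogous rule for $J$, this forces $K = J$.

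For the converse, the idea is to exhibit a second element of $E$ whenever $J$ fails to be allowed. Choose $i \in J$ with $i - 1 \in S_\alpha \setminus J$ and $\lambda_{i-1} = \lambda_i - 1$; in $q_J(\alpha)$ the two blocks indexed by $i - 1$ and $i$ merge into a single constant block of value $\lambda_{i-1}$ running from position $x_{i-2} + 1$ to $x_i$. Setting $K' = J \cup \{i-1\}$, a direct check shows $q_{K'}(\alpha) \leq q_J(\alpha)$ (they differ only on the $(i-1)$-block, where $q_{K'}$ strictly decreases), and the membership test (\ref{eq:SuPalpha}) imposes a constraint only at the merged block's ending index $x_i$, where $q_{K'}(\alpha)_{x_i} = \lambda_i - 1 = \lambda_{i-1}$ as required. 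Hence $K' \in E \setminus \{J\}$, so $|E| \geq 2$ and the hom space vanishes in every degree.

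The main obstacle I anticipate is not conceptual but bookkeeping: carefully handling the shifts in ending indices under the various degenerate configurations, such as $\lambda_1 = 0$ (so $1 \notin S_\alpha$ and the allowed condition is vacuous at $i = 2$), $\lambda_2 = 1$ with $2 \in J$ (where the zero-block of $q_J(\alpha)$ swallows $\lambda_2$ and $S_{q_J(\alpha)}$ loses an index), or the enhancement slot $\star$ being involved. These are precisely the situations that items (\ref{item:all1})–(\ref{item:all4}) of Lemma \ref{lem:allowedFacts} are engineered to absorb, so I would invoke them to reduce each edge case to the generic arguments above.
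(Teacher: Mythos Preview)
Your proposal is correct and follows essentially the same approach as the paper's proof: both reduce via Theorem \ref{lem:homsBool} to showing that the contributing set $E=\{K\subseteq S_\alpha\mid q_K(\alpha)\in[f(q_J(\alpha)),q_J(\alpha)]\}$ equals $\{J\}$ exactly when $J$ is allowed, and both construct $K'=J\cup\{i-1\}$ as the extra element of $E$ for the converse. The only cosmetic difference is that the paper uses booleanity to reduce the forward direction to supersets $J'\supsetneq J$ and then checks a single coordinate, whereas you test an arbitrary $K$ against the support description (\ref{eq:SuPalpha}) at all ending indices; both arguments are equivalent once one observes (as you do) that the ending indices of $q_J(\alpha)$ coincide with those of $\alpha$ because $J$ is allowed.
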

	\begin{proof}Clearly it is true that $q_J(\alpha)\in [f(q_J(\alpha)), q_J(\alpha)]$. By Theorem \ref{lem:homsBool} there is an extension of degree $|J|$ if and only if there is no other subset $J'$ such that $q_{J'}(\alpha)\in [f(q_J(\alpha)), q_J(\alpha)]$. So we want to show that this is equivalent to $J$ being allowed. Assume $J$ is allowed and take $J'\not = J$. Because the antichain is \defn{boolean} it suffices to consider $J'\supset J$. Let $j$ be in $J'\setminus J$. Then 
\[(q_{J'}(\alpha))_{x_j} = \lambda_j - 1 < \lambda_j = (q_J(\alpha))_{x_j}\]
where $x_j$ is the end of the coefficient $\lambda_j$ in $\alpha$. Because $J$ is allowed $x_j$ is also the end of the coefficient $\lambda_i$ in $q_J(\alpha)$. By condition \ref{item:2} of Proposition \ref{lem:hom0} the inequality above implies that $q_{J'}(\alpha)\not\in[f(q_J(\alpha)), q_J(\alpha)]$. Reciprocally, assume that $J$ is not allowed. Then there exists $j\in S_{\alpha}$ such that $j\in J$, $j-1\not \in J$ and $\lambda_{j-1} = \lambda_j- 1$. One can then check that $q_{J\sqcup\{j-1\}}(\alpha) \in [f(q_J(\alpha)), q_J(\alpha)]$ using the characterisation for the support of the $\PRes_{\alpha}$ and conclude with Proposition \ref{lem:hom0}. Figure \ref{fig:PurExt} provides an illustration for the arguments of the proof.
\end{proof}
\begin{figure}[h!]
	\begin{minipage}{0.45\textwidth}\centering
		\begin{tikzpicture}[scale =2]
		\draw[step = .5cm, thin, lightgray] (0, 0) grid (3, 2);
		\draw[thick] (0, .5) -- (1, .5) -- (1, 1) -- (2, 1) -- (2, 2) -- (3, 2);
		\draw[thick, gray](0, 0.45) -- (1.05, .45)-- (1.05, 0.95) -- (2.05, .95) -- (2.05, 1.5) -- (3, 1.5);
		\draw[thick, lightgray](0, 0) -- (1.1, 0)-- (1.1, 0.9) -- (2.1, .9) -- (2.1, 1.45) -- (3, 1.45);
		\draw[thick, gray, dashed](0, 0) -- (.5, 0) -- (.5, .45) -- (1.5, .45) -- (1.5, .95) -- (3, .95) -- (3, 1.5);
		\draw[thick](3.02, 1.7)--(3.02, 2.1);
		\draw[thick, gray] (3.05, 1.7)--(3.05, 2.1);
		\draw[->] (2.25, 1.9)--node[right]{$J = \{3\}$}(2.25, 1.6);
		\draw[->] (.8, .4)--node[right]{$J' = \{1, 3\}$}(.8, .1);
		\draw[red] (.4, 0.05) rectangle (.6, 0.55);
		\end{tikzpicture}
		\caption*{$q_{\{1, 3\}}(\alpha)\not \in [f(\alpha_{\{3\}}), \alpha_{\{3\}}]$}
	\end{minipage}
	\begin{minipage}{0.45\textwidth}\centering
		\begin{tikzpicture}[scale =2]
		\draw[step = .5cm, thin, lightgray] (0, 0) grid (3, 2);
		\draw[thick] (0, .5) -- (1, .5) -- (1, 1) -- (2, 1) -- (2, 2) -- (3, 2);
		\draw[very thick, lightgray] (0, 0) -- (1, 0) -- (1, .5) -- (2.1, .5) -- (2.1, 1.9) -- (3, 1.9);
		\draw[thick, gray](0, 0.45) -- (2.05, 0.45) -- (2.05, .95) -- (2.05, 1.95) -- (3, 1.95);
		\draw[thick, gray, dashed](0, 0) -- (1.5, 0) -- (1.5, .45) -- (3, .45) -- (3, 1.95);
		\draw[thick](3.02, 1.7)--(3.02, 2.1);
		\draw[thick, gray] (3.05, 1.7)--(3.05, 2.1);
		\draw[->] (1.2, .9)--node[right]{$J = \{2\}$}(1.2, .6);
		\draw[->] (.2, .4)--node[right]{$J' = \{1, 2\}$}(.2, .1);
		\draw[red] (1.5, 0.5) circle (3pt);
		\end{tikzpicture}
		\caption*{$q_{\{1, 2\}}(\alpha) \in [f(\alpha_{\{2\}}), \alpha_{\{2\}}]$}
	\end{minipage}
	\newline\centering
	\begin{tikzpicture}
		\draw[thick] node [label=left:Legend:]{} (0, 0) -- (0.5, 0) node [right]{$\alpha$};
		\draw[very thick, lightgray] (2.5, 0) -- (3, 0) node [right, black]{$q_{J'}(\alpha)$};
		\draw[thick, gray] (5, 0) -- (5.5, 0) node [right, black]{$q_J(\alpha)$};
		\draw[thick, gray, dashed] (7.5, 0) -- (8, 0) node [right, black]{$f(q_J(\alpha))$};
	\end{tikzpicture}
	\caption{Illustration of the proof of Fact \ref{fact:PurExt} }\label{fig:PurExt}
\end{figure}
\begin{nota}\label{nota:elemExt}
There exists a \defn{canonical realisation} of these extensions defined as morphisms of complexes between the projective resolution associated to $\alpha$ and the interval module associated to $q_J(\alpha)$ concentrated in degree $|J|$. We take the morphisms of modules in degree $|J|$ to be the canonical projection of the factor $P_{q_J(\alpha)}$ onto $[f(q_J(\alpha)), q_J(\alpha)]$ and write the resulting morphism of complexes $u_J^{\alpha}$. Among the extensions we just exhibited we are particularly interested in the ones where $J = \{i\}\subset S_{\alpha}$ and write these $\prescript{1}{}{u_i^{\alpha}}$.  
\end{nota}
Just like for the degree zero morphisms, up to homotopy there exists a unique lift of these morphisms along the projective resolution $\PRes_{q_J(\alpha)}[|J|]$.
\begin{lem}\label{lem:explicitLift}Let  $\alpha$ be a left enhanced partition and let $J$ be an allowed subset of $S_{\alpha}$ of size $j$. The lift $\phi$ of $u^{\alpha}_J : \PRes_{\alpha} \to \PRes_{q_J(\alpha)}[|J|]$ has support $\bigoplus_{J\subseteq I} P_{q_I(\alpha)}$ in each degree. More precisely, in degree $j+k$, we have 
\[\phi_{j+k} = \bigoplus_{\substack{J\subset I\\ |I| = j+k}} (-1)^{|I\setminus J|_J + |J|\cdot k}\cdot id_{P_{q_I(\alpha)}}\]
where $|I|_J = \sum_{i\in I}|i|_J$.
\end{lem}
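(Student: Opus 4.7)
The plan is to verify the proposed formula directly by a sign computation and then invoke uniqueness of lifts along a projective resolution. First I would set up the identification: since $J$ is allowed, Lemma~\ref{lem:allowedFacts}(\ref{item:all2}) (resp.~(\ref{item:all3}) in the degenerate case $\lambda_2=1,\ 2\in J$) gives a bijection between subsets $I \subseteq S_\alpha$ containing $J$ and subsets $I' \subseteq S_{q_J(\alpha)}$, under which $q_I(\alpha) = q_{I'}(q_J(\alpha))$. Under this identification, the summand $P_{q_I(\alpha)}$ of $\PRes_\alpha$ (in degree $|I|$) coincides with a summand of $\PRes_{q_J(\alpha)}[|J|]$ in the same degree, which is the only slot where a non-zero contribution is possible. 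So the proposed formula produces a candidate graded map $\phi$ supported exactly on the subsets $I \supseteq J$.

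Next, I would check that $\phi$ is a chain map by evaluating both sides of $\phi \circ \partial^{\PRes_\alpha} = \partial^{\PRes_{q_J(\alpha)}[|J|]} \circ \phi$ on the summand $P_{q_I(\alpha)}$ in degree $j+k+1$ with $J\subseteq I$, projected onto the summand $P_{q_{I\setminus\{i\}}(\alpha)}$ in degree $j+k$. For this to be nonzero we need $J\subseteq I\setminus\{i\}$, i.e.~$i\in I\setminus J$. Using the explicit boundary from equation~(\ref{eq:boundComp}), the left-hand side picks up $(-1)^{|i|_I + |(I\setminus J)\setminus\{i\}|_J + |J|k}$, while the right-hand side picks up $(-1)^{|I\setminus J|_J + |J|(k+1) + |i|_{I\setminus J}}$ together with a shift sign $(-1)^{|J|}$ coming from the convention $\partial^{D[j]}=(-1)^j\partial^D$. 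The identities $|i|_I - |i|_{I\setminus J}=|i|_J$ (since $J\subseteq I$) and $|(I\setminus J)\setminus\{i\}|_J - |I\setminus J|_J = -|i|_J$ make the two signs agree modulo $2$, precisely because the $|J|\cdot k$ term in the exponent increments by $|J|$ as $k$ grows by one and so absorbs the shift sign.

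Then I would check the boundary condition with $u_J^\alpha$: in degree $|J|$ the only $I\supseteq J$ with $|I|=|J|$ is $I=J$ itself, and the formula gives the identity on $P_{q_J(\alpha)}$ with sign $+1$. Composing with the canonical projection $P_{q_J(\alpha)} \twoheadrightarrow [f(q_J(\alpha)),q_J(\alpha)]$ recovers $u_J^\alpha$ by Notation~\ref{nota:elemExt}. Since $\PRes_{q_J(\alpha)}[|J|]$ is a projective resolution of $[f(q_J(\alpha)),q_J(\alpha)][|J|]$, the standard comparison theorem says any two lifts of $u_J^\alpha$ agree up to homotopy, establishing uniqueness.

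The main obstacle is the sign bookkeeping in Step~2: one must keep straight the three sources of signs (the boundary of $\PRes_\alpha$, the boundary of $\PRes_{q_J(\alpha)}$ with its re-indexed subsets $I\setminus J$, and the shift by $|J|$) and verify the two combinatorial identities relating $|\,\cdot\,|_I$, $|\,\cdot\,|_{I\setminus J}$ and $|\,\cdot\,|_J$. The degenerate case requires only a translation of indices by $-1$ on the $\PRes_{q_J(\alpha)}$ side (by Lemma~\ref{lem:allowedFacts}(\ref{item:all3})), which does not affect the signs since it is a global relabelling compatible with the ordering used to define $|\,\cdot\,|_{I\setminus J}$.
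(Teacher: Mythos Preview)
Your proposal is correct and follows essentially the same approach as the paper. The only difference is organizational: the paper derives the sign $\varepsilon(I)$ recursively (computing $\varepsilon(I\sqcup\{i\})$ from $\varepsilon(I)$ via a single commuting square), whereas you verify the closed formula directly; both rest on the identity $|i|_I = |i|_{I\setminus J} + |i|_J$ together with the shift-sign convention $\partial^{D[j]}=(-1)^j\partial^D$, and both dispatch the degenerate case $\lambda_2=1,\ 2\in J$ by the same index-translation remark.
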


\begin{proof}The morphism of complexes $\phi = (\phi_l)_l$ is concentrated in degrees greater than the cardinal $j$ of $J$. In each degree, because the antichains are strong, the morphism of modules $\phi_{j+k}$ decomposes as linear combination of the identity morphisms of the indecomposable projective summands appearing in both the source and the target. These are precisely the summands $P_{q_I(\alpha)}$ satisfying $J\subseteq I$ as recorded in Lemma \ref{lem:allowedFacts}. It remains to determine the coefficients $\varepsilon(I)$ associated to each such summand by induction . Assume that $\lambda_2 > 1$ or that $2\not \in J$. The relevant morphisms fit in the following commutative square.
\begin{equation}
\begin{tikzcd}[column sep = huge]
P_{q_{I\sqcup \{i\}}(\alpha)}\ar[r, "(-1)^{|i|_{I\sqcup \{i\}}}"]\ar[d, "\varepsilon(I\sqcup\{i\})"']&P_{q_I(\alpha)}\ar[d, "\varepsilon(I)"]\\
P_{q_{I\setminus J \sqcup \{i\}}(q_J(\alpha))}\ar[r, "(-1)^{j+|i|_{I\setminus J\sqcup \{i\}}}"]& P_{q_{I\setminus J}(q_J(\alpha))}
\end{tikzcd}
\end{equation}
Recall from equation (\ref{eq:boundComp}) that $|i|_I = \{h\in I | h\leq i\} = |i|_{I\setminus J} + |i|_{J}$. 
Hence we get a recursive formula for the coefficient $\varepsilon(I\sqcup \{i\}) = (-1)^{j + |i|_J}\times \varepsilon(I)$.
By setting the degree $j$ sign to be $1$, and taking the product the previous coefficients, we get the claimed formula. If $\lambda_2 = 1$ and $2\in J$ the argument is similar. See Lemma \ref{lem:allowedFacts} for the relevant technical difference. To conclude we underline that $\varepsilon(I)$ is indeed not zero for all subset $I$ of $S_{\alpha}$ containing $J$.
\end{proof} 

We can now express the existence of morphisms in terms of these special extensions and our characterisation of degree zero morphisms and deduce a factorisation of any morphism into a special extension followed by a degree zero morphism.
\begin{prop}\label{prop:factHomYildCat}Let $\phi : \PRes_{\alpha}\to \PRes_{\beta}[i]$ be a non zero morphism in $\mathcal{Y}_{m,n}$. Then there exists a unique subset $J$ of $S_{\alpha}$ such that $\phi$ factors through $\PRes_{q_J(\alpha)}[i]$ and $|J| = i$ completing the following commutative diagram.
\[
\begin{tikzcd}[column sep = tiny]
\PRes_{\alpha}\ar[rr]\ar[dr]&&\PRes_{\beta}[|J|]\\
&\PRes_{q_J(\alpha)}[|J|]\ar[ur]&
\end{tikzcd}
\]
\end{prop}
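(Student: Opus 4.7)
The plan is to let the one-dimensionality of hom spaces in $\mathcal{Y}_{m,n}$ (Theorem \ref{lem:homsBool}) do most of the work, so that producing the factorisation reduces to exhibiting a single non-zero composition and invoking Lemmas already proved. Concretely, I will first extract $J$ from the existence of $\phi$, then use the allowedness criteria of Lemma \ref{lem:Factorisation1} and Proposition \ref{fact:PurExt} to build a candidate factorisation, and finally verify that its composition is non-zero (hence proportional to $\phi$).

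First, I apply Theorem \ref{lem:homsBool} to the boolean antichain $C_\alpha$ and the interval $I=[f(\beta),\beta]$. Since $\phi\neq 0$ lives in degree $i$, the set $E=\{K\subseteq S_\alpha\mid q_K(\alpha)\in[f(\beta),\beta]\}$ must be a singleton $\{J\}$ with $|J|=i$. This immediately gives uniqueness of $J$ and fixes its cardinality. Now Lemma \ref{lem:Factorisation1} applies: being the unique such subset forces $J$ to be allowed (and, if needed, to satisfy the extra condition), and simultaneously produces a non-zero degree-zero morphism $v\colon\PRes_{q_J(\alpha)}\to\PRes_\beta$. On the other side, since $J$ is allowed, Proposition \ref{fact:PurExt} provides a non-zero morphism $u_J^\alpha\colon\PRes_\alpha\to\PRes_{q_J(\alpha)}[|J|]$ with canonical representative described in Notation \ref{nota:elemExt}.

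Third, I form the composition $v[|J|]\circ u_J^\alpha\colon\PRes_\alpha\to\PRes_\beta[|J|]$. By Theorem \ref{lem:homsBool} the ambient hom space is at most one-dimensional and, being non-zero (it contains $\phi$), is exactly one-dimensional; so the composition is either zero or a non-zero scalar multiple of $\phi$, and it suffices to check non-vanishing. For this I use the canonical representatives: at cochain level $u_J^\alpha$ is concentrated in degree $|J|$ and acts on the summand $P_{q_J(\alpha)}$ as the canonical surjection $P_{q_J(\alpha)}\twoheadrightarrow[f(q_J(\alpha)),q_J(\alpha)]$, while the degree-zero lift of $v$ extends the canonical inclusion of intervals $[f(q_J(\alpha)),q_J(\alpha)]\hookrightarrow[f(\beta),\beta]$. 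The resulting composite restricted to $P_{q_J(\alpha)}$ is the non-zero canonical map $P_{q_J(\alpha)}\to[f(\beta),\beta]$, which under the identifications of equations \eqref{eq:TotalHomHom} and \eqref{eq:totHomComp} represents the unique (up to scalar) cohomology class generating $\HH^{|J|}(\Hom^\bullet_\IncAlg(\PRes_\alpha,[f(\beta),\beta]))$; this is precisely the class of $\phi$.

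The main obstacle I expect is step three: making sure that the explicit composition genuinely represents the generator of the one-dimensional hom space rather than a coboundary. I would resolve this by working in the homotopy category via the isomorphism of equation \eqref{eq:isoKoszulComplex}: because the Koszul complex $\Hom^\bullet(\PRes_\alpha,[f(\beta),\beta])$ is concentrated in degree $|J|$ with that degree corresponding to the summand indexed by $J$, any morphism of complexes whose $J$-component is the non-zero map $P_{q_J(\alpha)}\to[f(\beta),\beta]$ is automatically non-homotopic to zero. The verification is then purely combinatorial and relies on the intersection structure of $E$ already exploited in the proof of Theorem \ref{lem:homsBool}.
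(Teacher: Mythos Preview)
Your proof is correct and follows essentially the same route as the paper's: extract $J$ via Theorem \ref{lem:homsBool}, invoke Lemma \ref{lem:Factorisation1} and Proposition \ref{fact:PurExt} to build the two factors, then check the composite is non-zero. The only minor difference is in that last check: the paper lifts $u_J^\alpha$ along the resolution $\PRes_{q_J(\alpha)}$ using Lemma \ref{lem:explicitLift} and then argues no homotopy exists because $q_I(\alpha)\notin[f(\beta),\beta]$ for $I\neq J$, whereas you keep the target as the interval and read off non-vanishing directly from the fact that $\Hom^\bullet(\PRes_\alpha,[f(\beta),\beta])$ is literally concentrated in degree $|J|$ when $E=\{J\}$; your version is slightly more economical and avoids the explicit lift. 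One small wording point: the map of interval modules $[f(q_J(\alpha)),q_J(\alpha)]\to[f(\beta),\beta]$ is not an inclusion in general (cf.\ equation \eqref{eq:morphIntS}), but this does not affect the argument since you only need that it is non-zero at the vertex $q_J(\alpha)$.
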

\begin{proof}
By Theorem \ref{lem:homsBool}, there exists a non zero morphism $\phi$ if and only if there exists a unique subset $J$ such that $q_J(\alpha)\in [f(\beta), \beta]$. Moreover any two morphisms between $\PRes_{\alpha}$ and $\PRes_{\beta}[i]$ differ by multiplication by a scalar. Using Lemma \ref{lem:allowedFacts}, we can show that the following morphisms exist and are unique up to multiplication by a scalar
	$$\PRes_{\alpha}\to \PRes_{q_J(\alpha)}[|J|]\to\PRes_{\beta}[|J|].$$
We refer the reader to \cite[Lemma 5.1.11]{gottesman2024these} for a full argument. 
It remains to check that their composition is non zero in the homotopy category. We represent the morphisms in the following diagram.
\[
	\begin{tikzcd}
		\PRes_{\alpha}:\ar[d]\ar[dd, bend right = 50, dashed, "?"]&\dots\ar[r]\ar[d]&\bigoplus_{|I| = |J|}P_{\alpha_I}\ar[d]\ar[r]&\bigoplus_{|I| = |J| - 1 }P_{\alpha_I}\ar[r]\ar[d]&\dots\ar[r]&P_{\alpha}\\
		\PRes_{q_J(\alpha)}[|J|]\ar[d]:&\dots\ar[r]\ar[d]&P_{q_J(\alpha)}\ar[r]\ar[d]&0\ar[d]&&\\
		\PRes_{\beta}:&0\ar[r]&{[f(\beta), \beta]}\ar[r]\ar[from = uu, bend right = 40, crossing over, dashed, "?"']\ar[from = uur, crossing over, dashed, "0"']&0&&
	\end{tikzcd}
\]
The composition of the two morphisms of modules in degree $|J|$ is non zero given the support of the top map as described in Lemma \ref{lem:explicitLift}. No homotopy can be constructed because by assumption if $I\not= J$, $q_I(\alpha)\not\in [f(\beta), \beta]$. By multiplying one of the intermediate maps by a unit we can indeed factor $\phi$ through $\PRes_{q_J(\alpha)}[i]$.
\end{proof}
We now decompose further the extensions $\PRes_{\alpha} \to \PRes_{q_J(\alpha)}[|J|]$.
\begin{lem}\label{lem:decompExt}Let $\alpha$ be a partition and $J$ be an allowed subset of $S_{\alpha}$. Then the extension $u_{J}^{\alpha}:\PRes_{\alpha}\to\PRes_{q_J(\alpha)}[k]$ discussed in Proposition \ref{fact:PurExt} decomposes as
	\[\PRes_{\alpha}\xrightarrow{\prescript{1}{}{u_{j_1}}}\PRes_{q_{\{j_1\}}(\alpha)}[1]\xrightarrow{\prescript{1}{}{u_{j_2}[1]}}\dots\xrightarrow{\prescript{1}{}{u_{j_k}}[k-1]} \PRes_{q_J(\alpha)}[k],\]
where $J = \{j_1, \dots j_k\}$ is ordered by $j_t < j_{t+1}$.
\end{lem}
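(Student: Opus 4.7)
The plan is to proceed by induction on $k = |J|$. The base case $k = 1$ is tautological since $\prescript{1}{}{u_{j_1}} = u^\alpha_{\{j_1\}}$.

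For the inductive step, write $J_t = \{j_1, \dots, j_t\}$ for $0 \le t \le k$. First I would verify that every truncated subset $J_t$ is an allowed subset of $S_\alpha$: the allowed condition concerns pairs $(i, i-1)$, and since $J_t$ is obtained from $J$ by deleting only elements larger than $j_t$, no new violation can appear, so allowedness is inherited by iteration. Using Lemma \ref{lem:allowedFacts} (item 2 in the generic case, item 3 in the special case $\lambda_2 = 1$ with $2 \in J_{t-1}$, where the index is shifted), I check that $\{j_t\}$ is an allowed singleton of $S_{q_{J_{t-1}}(\alpha)}$ and that $q_{\{j_t\}}(q_{J_{t-1}}(\alpha)) = q_{J_t}(\alpha)$. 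This ensures that every elementary extension $\prescript{1}{}{u_{j_t}}[t-1]$ in the proposed composition is well-defined and has the source and target prescribed by the statement.

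By Theorem \ref{lem:homsBool} combined with Proposition \ref{fact:PurExt}, the hom space $\Hom_{D^b}(\PRes_\alpha, \PRes_{q_J(\alpha)}[k])$ is exactly one-dimensional and spanned by $u^\alpha_J$. Hence the composition $\prescript{1}{}{u_{j_k}}[k-1] \circ \cdots \circ \prescript{1}{}{u_{j_1}}$ is automatically a scalar multiple of $u^\alpha_J$, and the entire problem reduces to showing this scalar is non-zero.

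To verify non-vanishing, I would compute the composition at the chain-complex level using the explicit lifts from Lemma \ref{lem:explicitLift}. By the induction hypothesis, $u^\alpha_{J_{k-1}}$ admits a lift along $\PRes_{q_{J_{k-1}}(\alpha)}[k-1]$ whose component on the summand $P_{q_J(\alpha)}$ of degree $k$ of $\PRes_\alpha$ mapping to the summand $P_{q_J(\alpha)}$ of degree $1$ of $\PRes_{q_{J_{k-1}}(\alpha)}[k-1]$ is $\pm\, \mathrm{id}_{P_{q_J(\alpha)}}$ (the sign being given by the formula $(-1)^{|I\setminus J|_J + |J|k}$ specialized appropriately). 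Post-composing with $\prescript{1}{}{u_{j_k}}[k-1]$, whose only non-zero chain-level component is the canonical projection $P_{q_J(\alpha)} \twoheadrightarrow [f(q_J(\alpha)), q_J(\alpha)]$ placed in degree $k$, yields in degree $k$ of $\PRes_\alpha$ the composite $P_{q_J(\alpha)} \xrightarrow{\pm\, \mathrm{id}} P_{q_J(\alpha)} \twoheadrightarrow [f(q_J(\alpha)), q_J(\alpha)]$, which is non-zero. Since the target is concentrated in degree $k$, there is no room for a homotopy to kill this contribution, so the composition is non-zero in the homotopy category. The main obstacle is ensuring that the sign accumulation in the iterated lifts does not cancel out and that the special case $\lambda_2 = 1$ with $2 \in J$ is handled consistently; both are dispatched by the explicit formula of Lemma \ref{lem:explicitLift} and by the index-shift convention of Lemma \ref{lem:allowedFacts} (3).
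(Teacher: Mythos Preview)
Your proof is correct and follows essentially the same route as the paper: induction on $k$, reduction via Theorem \ref{lem:homsBool} to showing non-vanishing of the composition, and verification of non-vanishing using the explicit lift of Lemma \ref{lem:explicitLift} together with the observation that no homotopy exists. The one point the paper adds that you only gesture at is the exact sign: the paper computes the coefficient on the summand $P_{q_J(\alpha)}$ to be $(-1)^{|j_k|_{J_{k-1}} + (k-1)} = (-1)^{2(k-1)} = 1$, so the composition is literally the canonical map $u_J^\alpha$ and not merely a non-zero scalar multiple of it; this exact normalization is used later in the sign analysis of Lemma \ref{lem:RelIdentification} and Corollary \ref{cor:Pres}, so it is worth recording.
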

\begin{proof}
First notice that the truncated sets $J_i = \{j_1, \dots, j_i\}$ are all allowed. We proceed by induction on $k$. Assume the result holds for $k-1$. Then by Theorem \ref{lem:homsBool} it suffices to show that the composition 
\[\PRes_{\alpha}\to\PRes_{q_{J_{k-1}}(\alpha)}[k-1] \to \PRes_{q_J(\alpha)}[k]\]
is non zero. Again we can draw a diagram to visualise the situation:
\[
\begin{tikzcd}
\PRes_{\alpha}:\ar[d]\ar[dd, bend right = 60, dashed, "?"' near start]&[-35pt]\dots\ar[r]\ar[d]&\bigoplus_{|I| = k}P_{q_I(\alpha)}\ar[r]\ar[d]&[-25pt]\bigoplus_{|I| = k-1}P_{q_I(\alpha)}\ar[d]\ar[r]&[-30pt]\bigoplus_{|I| = k-2}P_{q_I(\alpha)}\ar[d]\ar[r]&[-20pt]\dots\\
\PRes_{q_{J_{k-1}}(\alpha)}[k-1]: \ar[d]&\dots\ar[d]\ar[r]&\bigoplus_{i\in S_{q_{J_{k-1}}(\alpha)}}P_{q_{i}(q_{J_{k-1}}(\alpha))}\ar[d]\ar[r]&P_{q_{J_{k-1}}(\alpha)}\ar[d]\ar[r]&0&\\
\PRes_{q_J(\alpha)}[k]:&0\ar[r]&{[f(q_J(\alpha)), q_J(\alpha)]}\ar[r]\ar[from = uu, bend right = 75, crossing over, dashed, "?"']\ar[from = uur, bend left = 10, crossing over, dashed, "0"]&0&&
\end{tikzcd}
\]
It is clear that $q_I(\alpha)\not\leq q_J(\alpha)$ when $|I| = k-1$. It might not be clear however that the composition of the two morphisms of module in degree $k$ is non zero. Because the bottom map has support $P_{q_J(\alpha)}$ we need to argue that the restriction of the top map to $P_{q_J(\alpha)}$ is non zero either. This is the case by Lemma \ref{lem:explicitLift}. 
Finally, we need to point out that this results in the canonical map $u_J$ and not just a map proportional to it meaning that no negative sign appears when composing the two maps. We compute the sign of the module map in degree $j$ with Lemma \ref{lem:explicitLift}. It has support the indecomposable projective module $P_{q_J(\alpha)}$. Its sign is 
\[(-1)^{|j_k|_{J_{k-1}} + 1\cdot(k-1)} = (-1)^{2\cdot(k-1)} = 1\]
Hence no sign appears.
\end{proof}
Finally, we give a similar decomposition for degree zero morphisms.
\begin{lem}\label{lem:decomHom0}
Let $\phi : \PRes_{\alpha}\to\PRes_{\beta}$ be a non zero morphism of modules. Then there exists a sequence $(d_1, \dots, d_r)$ such that for all $0\leq i< r $, we have $0\leq d_i\leq \mu_{i}$, with $d_i = \mu_i$ only if $i = 1$ and $\lambda_1 = 0$, 
	\[\beta = p_1^{d_1}\circ\dots\circ p_r^{d_r}(\alpha).\]
Moreover, the morphism factors through each of the objects associated to the intermediate partitions.
\end{lem}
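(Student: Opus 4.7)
The starting point is Proposition \ref{lem:hom0}, which characterises a nonzero degree-zero morphism $\phi : \PRes_\alpha \to \PRes_\beta$ either by the chain $f(\alpha) \leq f(\beta) \leq \alpha \leq \beta$, or equivalently (item (5)) by an injection $\sigma : S_\alpha \hookrightarrow S_\beta$ satisfying $\lambda_i = l_{\sigma(i)}$ and the interlacing $x_{i-1} < y_{\sigma(i)} \leq x_i < y_{\sigma(i)+1}$. I would read the sequence $(d_1,\dots,d_r,d_\star)$ straight off this combinatorial data by setting $d_i := x_i - y_{\sigma(i)}$ for each $i \in S_\alpha$, taking $d_\star = 1$ precisely when $\beta$ carries $n$ as a mutable coefficient while $\alpha$ has $\mu_{r+1}>0$ and $\lambda_r<n$, and invoking the special $p_1$ case of Definition \ref{def:allowedDual} to record the removal of $0$ when $\lambda_1 = 0$ but $\beta$ has no zero coefficient. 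The interlacing inequality $y_{\sigma(i)} > x_{i-1}$ translates at once to the bound $d_i < \mu_i$.

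Next I would check inductively that the composition $p_1^{d_1}\circ\cdots\circ p_r^{d_r}\circ p_\star^{d_\star}$ is well defined at every intermediate step. Since $p_j$ only modifies $\mu_j$ and $\mu_{j+1}$, the multiplicity at position $i$ remains equal to $\mu_i$ until the $p_i^{d_i}$ block is reached; during that block it decreases by one at each application, and the bound $d_i < \mu_i$ is exactly what keeps it at least $1$, while the $p_\star$ step only requires $\mu_{r+1} > 0$. I would then verify that the end product equals $\beta$ by noting that each $p_i$ lowers the ending index $x_i$ by one and leaves the other $x_j$ untouched, so the final ending indices are $x_i - d_i = y_{\sigma(i)}$, which matches $\beta$ coefficient by coefficient once the $p_\star$ step has turned $n$ into a mutable coefficient (when applicable) and the special $p_1$ step has dropped $0$ (when applicable).

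For the factorisation claim, Corollary \ref{nota:Hom0} provides a nonzero morphism between consecutive partitions in the chain built above, and composing them gives a nonzero morphism $\PRes_\alpha \to \PRes_\beta$. By Theorem \ref{lem:homsBool} the hom space is one-dimensional, so this composition coincides with $\phi$ up to a nonzero scalar, and consequently $\phi$ factors through every intermediate $\PRes_{\alpha^{(k)}}$ of the chain.

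The main obstacle I anticipate is the bookkeeping around the two exceptional moves: the special case of $p_1$, which is the only way to drop $0$ from the coefficient set, and $p_\star$, which is the only way to promote $n$ from the enhancement. Both moves alter the set of distinct coefficients, and it has to be shown that they appear in the decomposition precisely when the coefficient multisets of $\alpha$ and $\beta$ differ in the relevant position. A dedicated case split will be required around $d_1$, where the ordinary and the special $p_1$ behaviours meet, in order to align the bound on $d_1$ with whether or not $\beta$ retains $0$ as a coefficient.
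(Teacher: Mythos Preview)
Your approach is essentially identical to the paper's: both read off $d_i = x_i - y_j$ from the interlacing data of Proposition~\ref{lem:hom0}(5), handle the two boundary moves $p_1$ and $p_\star$ separately, and conclude the factorisation by one-dimensionality of the hom space. The one point that needs more care is your assertion that ``composing them gives a nonzero morphism $\PRes_\alpha \to \PRes_\beta$'': compositions of nonzero interval morphisms can vanish, so this requires justification. The paper handles this by observing that the element $\alpha$ lies in the support of every intermediate interval $[f(\gamma),\gamma]$ along the chain, so the composite is nonzero when evaluated at the vertex $\alpha$; you should add this check.
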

\begin{proof}By Theorem \ref{lem:homsBool} and Proposition \ref{lem:hom0}(\ref{item:5}) the inclusion 
\[\{\lambda_i | i\in S_{\alpha}\}\subseteq \{l_j | j\in S_{\beta}\}\]holds. In other words, for all $i\in S_{\alpha}$, there exists $j\in S_{\beta}$ such that $\lambda_i = l_j$. Moreover their ending indices satisfy $x_{i-1}<y_j \leq x_i < y_{j+1}$. For $i\in S_{\alpha}$ we set $d_i = x_i-y_j$. If $l_1 = 0$, because $\alpha \leq \beta$ we also have $\lambda_1 = 0$ and $y_1\leq x_1$ so set $d_1 = x_1 - y_1$. The condition $d_i< \mu_i$ holds because $x_{i-1} < y_{j}$ when $i\not= 1$. The resulting partition has the same coefficients as $\beta$. The multiplicities also match when $i\leq r$:
\[x_{i} - d_i - x_{i-1} - d_{i-1} = x_{i} - x_{i} + y_{j} - x_{i-1} +x_{i-1} - y_{j-1} = m_j.\]
To see that the map factors through the intermediate partitions use Proposition \ref{lem:hom0}(\ref{item:4}): 
let $k$ be in $S_{\alpha}$, $0\leq d < d_k$ and set $\gamma = p_k^d\circ p_{k+1}^{d_{k+1}}\circ\dots\circ p_{r}^{d_r}(\alpha)$ and assume the morphism $\PRes_{\alpha} \to \PRes_{\gamma}$ can be decomposed through all the intermediate links. By construction, $\alpha\in [f(p_k(\gamma)), \gamma]\cap [f(\gamma), \gamma]\cap [f(\alpha), \alpha]$ so the map induced by the composition 
\[\PRes_{\alpha}\to \PRes_{\gamma}\xrightarrow{\prescript{0}{}{u_k^{\gamma}}}\PRes_{p_{k}(\gamma)}\]
is non zero on this vertex hence the composition itself is non zero. Recall that by Theorem \ref{lem:homsBool} the hom spaces are one dimensional. It follows that the map $\phi$ can be factored into a sequence of $\prescript{0}{}{u_k^{\alpha'}}$ maps.
\end{proof}
\begin{figure}[h!]
	\begin{minipage}{0.45\textwidth}\centering
		\begin{tikzpicture}[scale =2]
		\draw[step = .5cm, thin, lightgray] (0, 0) grid (3, 2);
		\draw[thick] (0, .5) -- (1, .5) -- (1, 1) -- (2, 1) -- (2, 2) -- (3, 2);
		\draw[thick, lightgray](0, .45) -- (1.5, .45) -- (1.5, .95) -- (2.05, 0.95) -- (2.05, 1.95) -- (3, 1.95);
		\draw[thick, dashed] (0, 0) -- (.5, 0) -- (.5, .5) -- (1.5, .5) -- (1.5, 1) -- (3, 1) -- (3, 2);
		\draw[<-] (1.1, .65)--node[above]{$d_1$}(1.4, .65);
		\end{tikzpicture}
		\caption*{$\alpha_{x_{i-1}} = \beta_{x_{i-1}}$}
	\end{minipage}
	\begin{minipage}{0.45\textwidth}\centering
	\begin{tikzpicture}
		\draw[thick] node [label=left:Legend:]{} (0, 0) -- (0.5, 0) node [right]{$\beta$};
		\draw[thick, dashed] (2, 0) -- (2.5, 0) node [right, black]{$f(\beta)$};
		\draw[thick, lightgray] (4, 0) -- (4.5, 0) node [right, black]{$\alpha$};
		\draw (1.5, -1) -- node[]{$d_0 = 0, d_1 = 1, d_2 = 0$} (1.5, -1);
	\end{tikzpicture}
	\end{minipage}
	\caption{Illustration of the proof of Lemma \ref{lem:decomHom0}} \label{fig:Factorisation12}
\end{figure}
Morphisms in the category $\mathcal{Y}_{m,n}$ can thus be decomposed into the morphisms of degree one and the morphisms of degree zero we have exhibited. Most of them cannot be decomposed further.
\begin{lem}\label{lem:indec}
Let $\alpha$ be a partition, $h\in S_{\alpha}$ allowed and $k\in \{1, \dots, r\}$ such that $p_k$ is well defined. Assume that $h \not = \epsilon$ or that $ \lambda_2 > 1$. Then the morphisms $\prescript{1}{}{u_h}$ and $\prescript{0}{}{u_k}$ are irreducible in $\mathcal{Y}_{m,n}$.
\end{lem}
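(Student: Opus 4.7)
The plan is to suppose a non-trivial factorisation $\prescript{0}{}{u_k^{\alpha}} = \psi\circ\phi$ or $\prescript{1}{}{u_h^{\alpha}} = \psi\circ\phi$ with $\phi,\psi$ both non-zero in $\mathcal{Y}_{m,n}$, and show that one of them must be an isomorphism. Since distinct objects in $\mathcal{Y}_{m,n}$ are pairwise non-isomorphic and hom spaces are at most one-dimensional (Theorem \ref{lem:homsBool}), this amounts to showing that one of $\phi,\psi$ is a scalar multiple of the identity on its common source and target. Because hom spaces are concentrated in a single non-negative degree, the intermediate object has the form $\PRes_{\gamma}[i]$ with $i\geq 0$ and complementary degree also non-negative. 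The main tools will be the canonical factorisation of Proposition \ref{prop:factHomYildCat} together with the decomposition Lemmas \ref{lem:decompExt} and \ref{lem:decomHom0}.

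For $\prescript{0}{}{u_k^{\alpha}}$ the degree constraint forces $i=0$, so both $\phi,\psi$ are degree zero, and by Lemma \ref{lem:decomHom0} they correspond to multi-indices $I,J$ with $\gamma = p_I(\alpha)$ and $p_k(\alpha) = p_J(\gamma) = p_{I\sqcup J}(\alpha)$. The key step will be the \emph{atomicity} of the $p$-transformations: using commutativity (Lemma \ref{lem:allowedFacts} items \ref{item:all7}--\ref{item:all8}) and viewing each regular $p_i$ as the displacement $e_{i+1}-e_i$ in the multiplicity vector space, the linear independence of $\{e_{i+1}-e_i\}_{i=1}^{r}$ together with $p_{I\sqcup J}(\alpha) = p_k(\alpha)$ forces $I\sqcup J = \{k\}$ as a multi-set, so one of $I,J$ is empty. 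The special cases $p_1$ (dropping a zero coefficient) and $p_\star$ (moving the enhancement bar) change the number of mutable coefficients in opposite directions and cannot combine to produce the effect of a single regular $p_k$; this rules them out of any non-trivial decomposition. Hence one of $\phi,\psi$ is the identity.

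For $\prescript{1}{}{u_h^{\alpha}}$ the degree constraint gives $i\in\{0,1\}$. In the case $i=0$, $\phi$ is degree zero with $\gamma = p_I(\alpha)$, and by Proposition \ref{prop:factHomYildCat} the degree-one $\psi$ canonically factors as an extension $\prescript{1}{}{u_{h'}^{\gamma}}$ for some allowed $h'\in S_\gamma$, followed by a degree-zero map $\psi_2 : \PRes_{q_{\{h'\}}(\gamma)}\to\PRes_{q_{\{h\}}(\alpha)}$. Applying Proposition \ref{prop:factHomYildCat} again to the composite $\prescript{1}{}{u_{h'}^{\gamma}}\circ\phi$ gives a canonical extension $\prescript{1}{}{u_{h''}^{\alpha}}$, and uniqueness of the canonical factorisation of $\prescript{1}{}{u_h^{\alpha}}$ itself (whose canonical intermediate is $\PRes_{q_{\{h\}}(\alpha)}[1]$ and whose degree-zero part is the identity) forces $h'' = h$ and makes the composite degree-zero loop $\PRes_{q_{\{h\}}(\alpha)}\to\PRes_{q_{\{h'\}}(\gamma)}\to\PRes_{q_{\{h\}}(\alpha)}$ equal to the identity. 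By the atomicity argument of the previous paragraph applied to this loop, one deduces $q_{\{h'\}}(\gamma) = q_{\{h\}}(\alpha)$, and a partition-level comparison (using that $p_I$ only changes multiplicities while $q_{\{h'\}}$ uniformly decreases a single coefficient value on its entire block) yields $I=\emptyset$ and $h'=h$, making $\phi$ the identity. The case $i=1$ is treated symmetrically, producing $\psi$ as the identity.

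The hard part will be the $i=0$ subcase of $\prescript{1}{}{u_h^{\alpha}}$: carefully combining the uniqueness in the canonical factorisation with the atomicity of the $q$- and $p$-transformations to extract the conclusion $I=\emptyset$. The exclusion of the case $h=\epsilon$ with $\lambda_2=1$ is dictated by the fact that there $q_{\{\epsilon\}}$ creates a new zero-valued coefficient whose effect can alternatively be reproduced by combining $\prescript{1}{}{u_{h'}}$ at a shifted index with a non-trivial $p$-transformation; atomicity then fails and $\prescript{1}{}{u_\epsilon^{\alpha}}$ admits non-trivial factorisations, explaining why this case must be ruled out of the statement.
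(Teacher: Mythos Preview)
Your approach is genuinely different from the paper's. The paper works directly with the interlacing characterisation of item \ref{item:5} in Lemma \ref{lem:hom0}: given an intermediate $\PRes_{\beta}$, it combines the inequalities $x_{i-1} < y_j \leq x_i < y_{j+1}$ coming from $\PRes_{\alpha}\to\PRes_{\beta}$ with the corresponding ones for $\PRes_{\beta}\to\PRes_{p_k(\alpha)}$ (respectively for $q_h(\alpha)$), and from the two interlacings squeezes each ending index $y_j$ to a single value. This pins down $\beta$ as either the source or the target without ever invoking the decomposition lemmas.

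Your route via Lemmas \ref{lem:decomHom0}, \ref{lem:decompExt} and the uniqueness in Proposition \ref{prop:factHomYildCat} is sound in outline, and the reduction of the $i=0$ subcase of $\prescript{1}{}{u_h}$ to a degree-zero loop on $\PRes_{q_h(\alpha)}$ is a nice idea. However, the ``atomicity'' step is underspecified. The linear-independence argument for the displacements $e_{i+1}-e_i$ only covers the regular $p_i$'s; the special $p_1$ (when $\mu_1=1$, $\lambda_1=0$) and $p_{\star}$ change the length of the multiplicity vector and the indexing of $S$, so the multi-set ``$I\sqcup J$'' is not even well defined across the composition. Your dismissal of these cases (``change the number of mutable coefficients in opposite directions'') is not the operative mechanism and fails as stated, since a $p_{\star}$ in $I$ could in principle be followed by a special $p_1$ in $J$. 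The correct way to exclude $p_{\star}$ from $I$ is that it would add $n$ to the mutable values of $\gamma$, contradicting the inclusion $\{l_j\mid j\in S_{\gamma}\}\subseteq\{\lambda'_i\mid i\in S_{p_k(\alpha)}\}$ from item \ref{item:4}; and the special $p_1$ is excluded because $\gamma\leq p_k(\alpha)$ at index $1$ forces $\gamma_1=0$ whenever $p_k(\alpha)_1=0$. Analogous care is needed in the degree-one case before drawing $q_h(\alpha)=q_{h'}(p_I(\alpha))\Rightarrow I=\emptyset$. Once these special cases are handled your argument can be completed, but as written it is incomplete precisely where the combinatorics is most delicate, and the paper's index-chasing avoids this issue altogether.
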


\begin{proof}
Consider a morphism in degree zero $\prescript{0}{}{u_k}$ and assume it factors through an object $\PRes_{\beta}$. The partitions $\alpha$ and $p_k(\alpha)$ are identical except in position $x_i$. By Lemma \ref{lem:hom0}(\ref{item:5}) it holds that $\alpha \leq \beta \leq p_k(\alpha)$ and that $\beta$ has the same coefficients as $\alpha$ and $p_k(\alpha)$. Hence $\beta = \alpha$ or $\beta = p_k(\alpha)$.
\par Next let $\prescript{1}{}{u_h}$ be an extension with $\alpha$ and $\beta$ as before. If the object $\PRes_{\beta}$ is shifted by $0$, $\alpha$ and $\beta$ satisfy Lemma \ref{lem:hom0}(\ref{item:5}). Simultaneously by Theorem \ref{lem:homsBool} and Proposition \ref{prop:factHomYildCat} there exists $f$ such that $q_f(\beta)$ and $q_i(\alpha)$ satisfy Lemma \ref{lem:hom0}( \ref{item:5}). It follows that both the coefficients and the multiplicities of $\alpha$ and $\beta$ match. The case where $\PRes_{\beta}$ is shifted by one is similar and is left to the reader. More details can be found in \cite[Lemma 5.1.15]{gottesman2024these}
\end{proof}
\begin{ex}\label{note:extNonElem} If $\lambda_2 = 1$, then $\prescript{1}{}{u_2}$ decomposes into $\prescript{1}{}{u_1}\circ\prescript{0}{}{u_1^{\mu_1-1}}\circ\prescript{0}{}{u_0}$ where $\mu_1$ is the multiplicity of the value zero in the source partition. We give a concrete example below.
\[
\begin{tikzcd}[column sep = tiny]
\PRes_{(0011)}\ar[rr]\ar[dr]&&\PRes_{(0000)}[1]\\
&\PRes_{(1111)}[1]\ar[ur]&
\end{tikzcd}
\]
\end{ex}
%
\subsection{Configurations and relations}
\label{subsection:configAndRels}
We can now describe the irreducible  morphisms of subsection \ref{subsec:homsets1} using configurations through the map $\phi_r$  from subsection \ref{subsec:grids}. To do so we define a partial function on configurations, as in \cite{Herschend_2011} for the construction of higher Auslander algebras of type A. If $R$ is a configuration, $k \in R$ and $k - 1 \not\in R$, we write 
\begin{equation}\sigma_k^-(R) = (R\cup{\{k-1\}})\setminus\{k\}.\end{equation}
%
Because we only consider plain partitions, the abaci we will encounter have no bead in column $k = -m$. Hence we will ont consider transformations $\sigma_{-m}$ and $\sigma_{-m+1}$. However, the proofs of this subsection hold without his limitation.

\begin{ex} Recall the right abacus from Example \ref{ex:confToAbac} associated to the partition $a = (0, 2, 3, 7, 7)$.
\begin{center}\begin{tabular}{c c c c c | c c c c c c c c}
-5&-4&-3&-2&-1&0&1&2&3&4&5&6&7 \\
\hline
&$\bullet$&&&&$\bullet$&&$\bullet$&$\bullet$&&&&$\bullet$
\end{tabular}
\end{center}
Applying $\sigma^-_{0}$ to the configuration associated to $a$ consists of taking the bead placed in $-4$ and sliding it down to the $-5$. We then get the following abacus,
\begin{center}\begin{tabular}{c c c c c | c c c c c c c c}
-5&-4&-3&-2&-1&0&1&2&3&4&5&6&7 \\
\hline
&$\bullet$&&&$\bullet$&&&$\bullet$&$\bullet$&&&&$\bullet$
\end{tabular}
\end{center}
which is associated to the partition $(2,2,3,7,7)$.
\end{ex}

\begin{prop} \label{prop:configArrows}
In terms of configurations, the irreducible morphisms in degree zero, correspond to arrows
\[\PRes_R\to \PRes_{\sigma^-_k(R)}\text{ with } 0 \geq k > -m+1 \in R.\]
Next the irreducible morphisms in degree 1 correspond to the following transformation on configurations
\[\PRes_R\to \PRes_{\sigma^-_k(R)}\text{ with } 0 < k\in R.\]
\end{prop}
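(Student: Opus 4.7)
The plan is to translate the combinatorial operations $p_i$ and $q_h$ on enhanced partitions into operations on the associated right configurations via $\phi_r$, and check case-by-case that both produce $\sigma^-_k$, with $k\leq 0$ for the $p_i$ and $k>0$ for the $q_h$. By Lemma \ref{lem:indec} (combined with Note \ref{note:extNonElem}) the elementary morphisms in degree zero are the $\prescript{0}{}{u_i^\alpha}$ with $p_i$ well defined, and in degree one they are the $\prescript{1}{}{u_h^\alpha}$ with $\{h\}$ allowed, so it suffices to treat these.

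First I would handle the degree zero case, splitting along the three possibilities of Definition \ref{def:allowedDual}. When $i\in\{1,\dots,r\}$ with $\mu_i>1$, the transformation $p_i$ leaves every $\lambda_j$ untouched and only decreases the ending index $x_i$ by one; on the configuration the positive beads are unchanged, while on the negative side the gap at $-x_i$ migrates to $-x_i+1$. Equivalently, the bead at $k=-x_i+1\leq 0$ (which is genuinely a bead precisely because $\mu_i>1$ ensures $x_i-1$ is not an ending index) slides to $k-1$, so $\phi_r(p_i(\alpha))=\sigma^-_k(\phi_r(\alpha))$. When $i=1$, $\mu_1=1$ and $\lambda_1=0$, the bead at $0$ disappears while $-1$ becomes a bead since $-x_1=-1$ is no longer removed, which is $\sigma^-_0$. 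When $i=\star$, the coefficient $n$ becomes mutable, so $-x'_{r+1}=-m$ becomes a gap and simultaneously $n$ enters the positive beads; since $-m-1\equiv n\pmod{m+n+1}$, this is $\sigma^-_{-m}$.

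For the degree one case the condition that $\{h\}$ is allowed, combined with Item \ref{item:all1} of Lemma \ref{lem:allowedFacts}, guarantees that $\alpha$ and $q_h(\alpha)$ have identical multiplicities of nonzero coefficients, so the negative side of the configuration is preserved. On the positive side only $\lambda_h$ changes, to $\lambda_h-1$, and the allowedness of $\{h\}$ is precisely the requirement that $\lambda_{h-1}<\lambda_h-1$ (or that $h=\epsilon$), which states that $\lambda_h-1$ is not already a positive bead of $\phi_r(\alpha)$. Hence $\phi_r(q_h(\alpha))=\sigma^-_{\lambda_h}(\phi_r(\alpha))$ with $\lambda_h>0$.

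Finally I would check the converse, that every admissible $\sigma^-_k$ on $R=\phi_r(\alpha)$ arises this way: given $k\in R$ with $k-1\notin R$, one reconstructs either the index $h\in S_\alpha$ with $\lambda_h=k$ (when $k>0$) or one of the three flavours of $p_i$ (when $k\leq 0$) by reading the ending-index data of $\alpha$, and the two defining conditions on $\sigma^-_k$ translate term-by-term to the allowedness or well-definedness conditions. The step that needs the most care is the cyclic case $k=-m$ handled by $p_\star$, since there the bead genuinely wraps around $\mathbb{Z}/(m+n+1)\mathbb{Z}$; all other cases are purely local modifications of the abacus and mirror the discussion of $f$ and $g$ as left-shifts in Proposition \ref{fact:gShift}.
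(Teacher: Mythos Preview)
Your proposal is correct and follows essentially the same route as the paper: a case-by-case translation of the operations $p_i$ and $q_h$ through $\phi_r$, followed by the converse recovery of the partition datum from an admissible $\sigma^-_k$. Your identification $k=-x_i+1$ in the generic degree-zero case is in fact the right one (the paper's printed formula $\sigma^-_{-x_i}$ appears to be a typo), and your separate treatment of the special cases $i=1,\mu_1=1,\lambda_1=0$ and $i=\star$ matches the paper's. One small imprecision: your opening sentence says the elementary degree-one morphisms are exactly the $\prescript{1}{}{u_h^\alpha}$ with $\{h\}$ allowed, but Note~\ref{note:extNonElem} (which you cite) excludes $h=\epsilon=2$ with $\lambda_2=1$; this exclusion is precisely what your later clause ``$\lambda_h-1$ is not already a positive bead'' needs, so the argument is fine once you carry that restriction through explicitly.
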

Note that, for a partition $\alpha$ with $\lambda_2 = 1$, the extension $\prescript{1}{}{u_2^{\alpha}}$ does not appear in the statement as the transformation $q_2$ of the partition does not correspond to a well defined transformation $\sigma_1^-$ of the corresponding configuration. This is coherent with Example \ref{note:extNonElem}.
\begin{proof}First we prove the statement about the degree zero irreducible morphisms. Let $\alpha$ be a partition. Recall from Corollary \ref{nota:Hom0} and Lemma \ref{lem:decomHom0} that these morphisms were of the form
$$\PRes_{\alpha} \to \PRes_{p_i(\alpha)}$$with $i\in \{1, \dots, r\}$ and $\mu_i > 1$. Using \ref{nota:Hom0}, we compute 
\begin{equation}
x_j^{p_i(\alpha)} = \begin{cases} x_{i} - 1 &\text{ if } j = i\\
						x_j &\text{ otherwise. }
\end{cases}\end{equation}This implies that $R_{p_i(\alpha)} = (R_{\alpha}\cup\{-x_i\}) \setminus\{-x_i + 1\} = \sigma_{-x_i + 1}^-(R_{\alpha})$. Finally the degree 1 morphisms were $$\PRes_{\alpha} \to \PRes_{q_{i}(\alpha)}[1]$$ where $\lambda_i -1> \lambda_{i-1}$. Here the configuration associated $\alpha_{\{i\}}$ is clearly the configuration associated to $\alpha$ where the $\lambda_i$ was replaced by $\lambda_i - 1$. 
\par Reciprocally, every $k\in R$ such that $\sigma^{-}_k(R)$ is well defined, corresponds to either an irreducible morphism or  an irreducible extension through the inverse of $\phi_r$ (recall Figure \ref{fig:exPhirInverse}). If $k \leq 0$, then $\sigma^{-}_k(R)$ corresponds to the transformation $p_i(\alpha)$ if $k -1$ is the $i^{th}$ gap in the negative side counting from the right. It is associated to $\prescript{0}{}{u_i}$. If $k > 0 $, then $\sigma^{-}_k(R)$ corresponds to the transformation $q_j(\alpha)$ if $k$ is the $j^{th}$ positive element of the configuration, in increasing order, hence it is associated to $\prescript{1}{}{u_j}$. This conclude the proof.
\end{proof}
Note that configurations make the description of the irreducible morphisms more homogeneous. This will also be the case for the relations between them. To simplify notations, the morphism (in degree zero or one) from $\PRes_{R}$ to $\PRes_{\sigma_k^-(R)}$ will be denoted $u_k^R$ since the degree of the morphism is encoded in the sign of $k$. 
We will now express the relations between the morphisms and extensions in the language of configurations.
	\begin{lem}\label{lem:RelIdentification}
	Let $R$ be a configuration, take $k, l\in R$ such that $k - 1, l - 1 \not\in R$. We have the following equalities in the category  $\mathcal{Y}_{m,n}$: 
\begin{equation}\label{eq:rel}
\rho_{k, l}^R :  u_l^{\sigma_k^-(R)}\circ u_{k}^R = \varepsilon u^{\sigma^-_l(R)}_k \circ u_l^R \text{ and }z_k^R : u_{k-1}^{\sigma^-_{k}(R)}\circ u_{k}^R = 0
\end{equation}
where $\varepsilon = -1$ if $k$ and $l$ are positive and $1$ otherwise.
	\end{lem}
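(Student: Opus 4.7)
The central tool for both relations is Theorem \ref{lem:homsBool}: every hom space between objects of $\mathcal{Y}_{m,n}$ is at most one-dimensional and concentrated in a single degree. This reduces the task to two subproblems. For the zero relation $z_k^R$ it suffices to show that the hom space containing the composition vanishes. For the commutation $\rho_{k,l}^R$ both compositions will turn out to be nonzero elements of the same one-dimensional hom space, so they differ by some scalar $\varepsilon\in\field^{*}$ which must then be pinned down. Throughout I translate from $R$ back to the partition $\alpha$ via $\phi_r$ and use the explicit lifts of Lemma \ref{lem:explicitLift} together with the factorisations of Lemmas \ref{lem:decompExt} and \ref{lem:decomHom0}.

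For the zero relation, $u_{k-1}^{\sigma_k^-(R)}\circ u_k^R$ corresponds to shifting a single bead from position $k$ to $k-2$. When $k\geq 2$ this decrements one coefficient $\lambda_j$ of $\alpha$ by two, producing a target $\beta$ such that every $q_I(\alpha)$ with $I\subseteq S_\alpha$ has value at least $\lambda_j-1$ at the indices where $\alpha$ equals $\lambda_j$; hence no $q_I(\alpha)$ is $\leq \beta$ at those positions, the set $E=\{I\mid q_I(\alpha)\in [f(\beta),\beta]\}$ is empty, and Proposition \ref{prop:homInt} forces $\Hom(\PRes_R,\PRes_\beta[p])=0$ for every $p$. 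The case $k=1$ is handled analogously, using Lemma \ref{lem:allowedFacts} Item \ref{item:all3} to accommodate the zero value. For $k\leq 0$ the composition is a pair of multiplicity operations; a direct check using the characterisation of $[f(\beta),\beta]$ exhibits two distinct allowed subsets $I$ satisfying $q_I(\alpha)\in [f(\beta),\beta]$, so $|E|\geq 2$ and the hom space again vanishes.

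For the commutation relation, both compositions are nonzero (they are composites of two elementary morphisms whose unique $J$ at each step is explicit) and live in the same one-dimensional hom space, hence differ by a scalar $\varepsilon$. When $k,l\leq 0$ both compositions are degree zero; the lifts along projective resolutions are monomorphic with all positive signs (the statement closing subsection \ref{subsec:homsets1}), so the two lifts literally coincide and $\varepsilon=1$. When exactly one of $k,l$ is positive the same positive-sign lift of the degree-zero factor combines with Lemma \ref{lem:explicitLift} at $|J|=1$ to make the two lifts agree on the distinguished summand, giving $\varepsilon=1$ once more. When both $k,l>0$, write $k=\lambda_{j_k}$ and $l=\lambda_{j_l}$; by Lemma \ref{lem:decompExt} both sides realise the canonical extension attached to $J=\{j_k,j_l\}$ but via the two opposite orderings of $J$, so comparing the signs supplied by Lemma \ref{lem:explicitLift} with $|J|=2$ (equivalently, the two boundary paths through $P_{q_{\{j_k\}}(\alpha)}$ and $P_{q_{\{j_l\}}(\alpha)}$ in equation (\ref{eq:boundComp})) contributes the ratio $(-1)^{|j_k|_J+|j_l|_J}=(-1)^{1+2}=-1$, yielding $\varepsilon=-1$. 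The delicate point, and the main obstacle, is precisely this sign bookkeeping across the two Koszul-style paths; the mixed-degree case also requires care to confirm that the two lifts agree rather than differ by a sign, which follows because the degree-zero factor is monomorphic with all positive signs and hence commutes on the nose with the degree-one identity summand.
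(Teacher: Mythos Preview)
Your overall strategy matches the paper's: a case analysis on the signs of $k$ and $l$, using one-dimensionality of hom spaces (Theorem~\ref{lem:homsBool}) and the explicit lifts of Lemma~\ref{lem:explicitLift} to pin down the scalar $\varepsilon$. The arguments for $\rho_{k,l}^R$ are essentially the paper's, and the sign computation in the both-positive case is correct.

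There is, however, a genuine error in your treatment of $z_k^R$ for $k\leq 0$. You claim that a direct check ``exhibits two distinct allowed subsets $I$ satisfying $q_I(\alpha)\in[f(\beta),\beta]$, so $|E|\geq 2$.'' This is false. Take $m=4$, $n\geq 3$, $\alpha=(1,1,2,3)$, so $R=\{-1,1,2,3\}$ and $k=-1$. Then $\beta=\sigma_{-2}^-\sigma_{-1}^-(R)$ corresponds to $(1,2,3,3)$ with ending indices $y_1=1$, $y_2=2$, $y_3=4$. Membership in $[f(\beta),\beta]$ requires $\gamma_2=2$, but every $q_I(\alpha)$ has value at most $\alpha_2=1$ at index $2$; hence $E=\varnothing$, not $|E|\geq 2$. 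The correct argument (which the paper gives) is precisely that $\alpha\notin[f(\beta),\beta]$, i.e.\ $\varnothing\notin E$: since the composite is a degree-zero map, the vanishing of the degree-zero hom space already forces $z_k^R=0$. Your conclusion survives, but the reasoning does not.

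You also omit the boundary case $k=-m$, where $k-1$ wraps around to $n$ and $u_{k-1}$ is a degree-one morphism rather than a degree-zero one; the paper handles this separately by noting that $q_{r+1}(p_\star(\alpha))$ does not occur in the antichain $C_\alpha$, so no extension exists. Similarly your ``$k=1$ handled analogously'' is too quick: for $k=1$ the composite is mixed degree (a $q$ followed by a $p$), which needs the mixed-case reasoning rather than the pure-extension argument you sketch for $k\geq 2$.
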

	\begin{proof}
We distinguish several cases depending on the sign of the integers $k$ and $l$. First consider $\rho_{k, l}^R $ when both $k$ and $l$ are positive \emph{i.e.} the irreducible morphisms are concentrated in degree 1. Then the morphisms in question can be summed up in the following diagram. We use partitions because the order relation is not clear on configurations. The elements $k$ and $l$ of $R$ can be uniquely associated to $i, j\in S_{\alpha}$
		\begin{equation}\label{diag:proofRel1-1_1}
		\begin{tikzcd}
		\PRes_{\alpha}\ar[d]\ar[dd, bend right = 40, dashed, "?"']:&[-30pt] \dots\ar[d]\ar[r]&\bigoplus_{i', j'\in S_{\alpha}}P_{q_{\{i',j'\}}(\alpha)}\ar[d]\ar[r]&[-20pt]\bigoplus_{i'\in S_{\alpha}}P_{q_{i'}(\alpha)}\ar[d]\ar[r]&P_{\alpha}\ar[d]\ar[r]&[-10pt]0\\
		\PRes_{q_i(\alpha)}\ar[d]:&\dots\ar[d]\ar[r]&\bigoplus_{j'\in S_{\alpha}}P_{q_{j'}(q_{i}(\alpha))}\ar[d]\ar[r]&P_{q_i(\alpha)}	\ar[d]	\ar[r]&0&\\
		\PRes_{q_{\{i, j\}}(\alpha)}:& 0\ar[r]& \left [f(q_{\{i, j\}}(\alpha)), q_{\{i, j\}}(\alpha)\right] \ar[r]\ar[from = uu, bend left = 50, dashed, 			"?", 	crossing over]&0&&
		\end{tikzcd}
		\end{equation}
	The resulting morphism of modules in degree two is the morphism from $\PRes_{\alpha}$ to $\PRes_{q_{\{i, j\}}(\alpha)}$ described in \ref{fact:PurExt} up to a sign because $i$ and $j$ are assumed to be allowed. This is symmetric in $i$ and $j$ hence the result. To be more specific, according to Lemma \ref{lem:explicitLift}, signs appears in the upper middle square. The boundary map component going from $P_{q_{i, j}(\alpha)}$ to $P_{q_{\{i\}}(\alpha)}$ has sign $(-1)^{|j|_{\{i, j\}}}$. Hence the two compositions differ by a factor $-1$. 
The remainder of the proof is similar and can be found in \cite[Lemma 5.2.3]{gottesman2024these}.
	\end{proof}
\begin{rem}\label{rem:signPbms}The fact that some squares commute and other square anticommute is inconvenient for Theorem \ref{thm:DerEqTypeA}. However, in Corollary \ref{cor:Pres}, we will see that these signs can be corrected.
\end{rem}
\begin{Def}\label{def:allowedConfig} A subset $J$ of a configuration $R$ is allowed if for all $j\in J$ either $j-1 \not \in R$ or $j-1\in J$. Then $\sigma_J^-(R)$ is well defined. Note that if $J$ is in the positive side of $R$, $J$ corresponds to an allowed subset of $S_{\alpha}$.
\end{Def}
\begin{lem}\label{lem:zeroRelsOneSide}Let $R_0, \dots, R_t$ be configurations such that for all $q\in\{0, \dots, t-1\}$, there exists $k_q\in R_q$ such that $R_{q+1} = \sigma^-_{k_q}(R_q)$. Then the morphism $f = u_{k_{p-1}}^{R_{p-1}}\circ \dots \circ u_{k_0}^{R_0}$ is non zero \emph{only if} for all $q$, $k_q\in R_0$. 
\end{lem}
\begin{proof}
 Assume that there exists $q$ such that $k_q\not\in R_0$. Then there exists $r< q$ with $k_r = k_q+1$. Take $q$ to be minimal and $r$ as close as possible to $q$. Take $s \in \{r, \dots, q\}$. Then, by the assumptions on $q$ and $r$, we either have $k_s > k_q+1$ or $k_s < k_q$. Whenever $s$ is such that $k_{s+1} < k_q < k_q+1 < k_{s}$, equation (\ref{eq:rel}) provides the equality
\[u_{k_{s+1}}\circ u_{k_s} = u_{k_s}\circ u_{k_{s+1}}.\]
Hence using equation (\ref{eq:rel}) we can rewrite the morphism $f$ as 
\[f = \epsilon \times ( \dots \circ u_{k_q}\circ u_{k_{s(q-1)}} \circ\dots\circ u_{k_{s(\tau+1)}} \circ u_{k_{s(\tau)}}\circ\dots\circ u_{k_{s(r+1)}} \circ u_{k_r} \circ\dots)\]
where $s$ is a permutation on the set $\{r+1, \dots , q-1\}$ and if $i > \tau$, $k_{s(i)} > k_q+1$ and $k_{s(i)} < k_q$ otherwise. Finally, we use equation (\ref{eq:rel}) again to get
\[f = \epsilon \times ( \dots \circ u_{k_{s(q-1)}} \circ\dots\circ u_{k_{s(\tau+1)}}\circ u_{k_q}\circ u_{k_r}  \circ u_{k_{s(\tau)}}\circ\dots\circ u_{k_{s(r+1)}} \circ\dots)\]
The central term is zero by Lemma \ref{lem:RelIdentification}. This concludes the proof. 
\end{proof}
We now want to find a canonical way of decomposing morphisms into irreducible ones. Knowing that irreducible morphisms are indexed by certain elements of the partition $R$, we want to equip $R$ with a convenient order relation. Recall that when we defined configurations on $\mathcal{Z}$ we wrote them as increasing sequences, by equipping $\mathcal{Z}$ with the order relation on $\mathbb{Z}$. This ordering is not adapted to the study of the morphisms: for any partition $R$ containing $-m$ but not $n$, it is in general unclear whether the morphism $u^R_{-m}$ should come before or after $u^R_{k}$ when $k$ is positive. Let $R$ and $R'$ be configurations. Suppose that $R' = \sigma_{k_t}\circ\dots\circ\sigma_{k_1}(R)$, that for each $q\leq t$, $k_q\in R$ and the partition $R_q = \sigma_{k_q}\circ\dots\circ\sigma_{k_1}(\alpha)$ is well defined. We have the following composition of irreducible morphisms 
\[f = u_{k_t}^{R_{t-1}}\circ\dots\circ u_{k_1}^R\]
in $\mathcal{Y}_{m,n}$. Order the combinatorial data $K = (k_1, \dots, k_p)$ and by the same process the elements of $\mathcal{Z}$ as follows: let $k_{min}$ be the maximal element of $\mathcal{Z}$ for the naive order which  does not appear in $K$ and is less than or equal to $1$. We consider the total order
 \[k_{min} <_f k_{min} + 1 <_f\dots<_f n <_f -m <_f\dots <_f k_{min} - 1\]
Because we picked the combinatorial data so that the consecutive morphisms are well defined, if $k_{min} \not = 1$ then $k_{min}$ is not an element of $R$. Alternatively, if $k_{min} = 1$, either $1\in R$ and the first value of the list $K$ ordered with $<_f$ is $k >_f 2$ or $1\not\in R$ and $k >_f 1$. Hence, if $k$ is the minimum of $K$ with regard to $<_{f}$, then $k-1$ is not in $R$.
\begin{lem}\label{lem:ordering}Consider two configurations $R$, $R'$ and a morphism $f$ as above with combinatorial data $k_1, \dots, k_t$ and order $<_f$. Suppose also that for all $1\leq q \leq t$, $k_{q}$ is in $R$. Then there exists a permutation $p\in \mathfrak{S}_{t}$ such that we have 
\[k_{p(1)} <_f \dots <_f k_{p(t)}\]
and 
\[f = \pm u_{k_{p(t)}}^{R_{p(t-1)}}\circ\dots\circ u_{k_{p(1)}}^{R}.\]
\end{lem}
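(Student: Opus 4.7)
The plan is a bubble-sort argument using the commutation relations $\rho_{k,l}^R$ from Lemma~\ref{lem:RelIdentification}. I proceed by induction on the number $N$ of $\prec_f$-inversions of the sequence $(k_1,\ldots,k_t)$, i.e.\ the number of pairs $q<q'$ with $k_{q'}\prec_f k_q$. If $N=0$ the sequence is already sorted and I take $p=\mathrm{id}$ with sign $+1$. Otherwise I locate an adjacent inversion $k_{q+1}\prec_f k_q$ and apply $\rho_{k_q,k_{q+1}}^{R_{q-1}}$ to rewrite
\[
u_{k_{q+1}}^{R_{q}}\circ u_{k_q}^{R_{q-1}} \;=\; \varepsilon\,u_{k_q}^{\sigma_{k_{q+1}}^{-}(R_{q-1})}\circ u_{k_{q+1}}^{R_{q-1}}, \qquad \varepsilon\in\{\pm 1\}.
\]
Only the intermediate configuration at position $q$ changes; all other $R_i$ are untouched and the number of $\prec_f$-inversions strictly decreases by one, so induction closes. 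The accumulated sign is the product of the $\varepsilon$'s picked up at each swap and lies in $\{\pm1\}$.

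The heart of the argument lies in verifying that the swap is licit whenever $k_{q+1}\prec_f k_q$, i.e.\ that $k_q,k_{q+1}\in R_{q-1}$ and $k_q-1,k_{q+1}-1\notin R_{q-1}$, which is exactly the hypothesis of $\rho_{k_q,k_{q+1}}^{R_{q-1}}$. The conditions on $k_q$ hold automatically, since $u_{k_q}^{R_{q-1}}$ already appears in $f$. For $k_{q+1}$, confronting the well-definedness requirements of $u_{k_{q+1}}^{R_q}$ with the formula $R_q=(R_{q-1}\cup\{k_q-1\})\setminus\{k_q\}$ shows that $k_{q+1}\notin R_{q-1}$ can only happen if $k_{q+1}=k_q-1$, and that $k_{q+1}-1\in R_{q-1}$ can only happen if $k_{q+1}=k_q+1$. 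I would then rule both out using the definition of $\prec_f$: the equality $k_{q+1}=k_q-1$ triggers the $z$-relation $z_{k_q}^{R_{q-1}}$ and forces $f=0$, a case for which the conclusion is vacuous; and the equality $k_{q+1}=k_q+1$, combined with $k_{q+1}\prec_f k_q$, forces $k_{q+1}$ to be the $\prec_f$-minimum $k_{\min}$, which is excluded by the very choice of $k_{\min}$ outside $K$. A symmetric check on $\sigma_{k_{q+1}}^{-}(R_{q-1})$ confirms that the new factor $u_{k_q}^{\sigma_{k_{q+1}}^{-}(R_{q-1})}$ is also well defined.

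The main obstacle is therefore the case analysis above: the two dangerous adjacencies $(k,k-1)$ and $(k,k+1)$ are precisely what the careful tailoring of $k_{\min}$ in the definition of $\prec_f$ is designed to neutralise. Once this verification is granted, the bubble sort is routine, and the permutation $p$ produced by the algorithm is simply a sorting permutation for $\prec_f$, delivering the stated rewriting of $f$ up to sign.
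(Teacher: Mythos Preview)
Your approach matches the paper's: both run a bubble sort on the combinatorial data $(k_1,\ldots,k_t)$ using the commutation relations $\rho_{k,l}$, checking at each adjacent inversion that the swap hypotheses hold. Your identification of the two obstructions $k_{q+1}=k_q\pm 1$ is exactly right, and your disposal of the case $k_{q+1}=k_q+1$ via the choice of $k_{\min}\notin K$ is correct and is essentially what the paper does.

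There is, however, a gap in your treatment of $k_{q+1}=k_q-1$. You invoke the $z$-relation to get $f=0$ and declare the conclusion ``vacuous'', but the lemma asserts an equality $f=\pm(\text{sorted composition})$: if the sorted composition were nonzero while $f=0$, the statement would be false, not vacuous. The paper's (admittedly murky) contraposition paragraph is taking a different route: it is arguing that under the hypothesis $k_i\in R$ for all $i$ this case \emph{cannot occur}. The key observation is that if both $v$ and $v-1$ lie in $K\subseteq R$, then in the original sequence $v-1$ must precede $v$, because $\sigma_v^-$ is only well-defined at the step with datum $v$ once $v-1$ has already been removed from the configuration; and since bubble sort never swaps an in-order adjacent pair $(v-1,v)$, the pattern $(v,v-1)$ never appears at any stage of the sort. (To make this airtight one also uses that the $k_i$ are pairwise distinct, which the paper asserts and which follows by pushing the same ``$v-1$ before $v$'' reasoning down along each maximal block of $R$.) Once you replace your ``vacuous'' step by this non-occurrence argument, your proof is complete and coincides with the paper's.
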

\begin{proof}Recall from equation (\ref{eq:rel}) of Lemma \ref{lem:RelIdentification}, that for each $q$, if $k_q, k_{q+1} \in R_{q-1}$ while $k_{q} - 1\not\in R_{q-1}$ and $k_{q+1} - 1\not\in R_{q-1}$ then
\begin{equation*}u_{k_{q+1}}^{R_{q}}\circ u_{k_{q}}^{R_{q-1}} = \epsilon u^{\sigma_{k_{q+1}}(R_{q})}_{k_{q}}\circ u_{k_{q+1}}^{R_{q-1}}\end{equation*}
with the transformation $\sigma_{k_q}$ being well defined on $R_{q-1}$. 
We want to show that this equation applies when $k_{q+1} <_f k_q$. The fact that $k_q -1 \not \in R_{q-1}$ follows from the assumption that $\sigma_{k_{q}}$ is well defined on $R_{q-1}$. Because $\sigma_{k_{q+1}}$ is well defined on $R_{q}$ and $k_{q+1} <_{f} k_q$ it must also be that $k_{q+1} - 1 \not \in R_{q-1}$. 
Using the bubble sort algorithm \cite{bubs}, the chain can be ordered up to a sign by applying relation $\rho_{k_{q+1}, k_q}$ when $k_q <_f k_{q-1}$. Indeed the bubble sort algorithm only swaps pairs of entries when they are not ordered according to the relation $<_f$. Moreover the assumptions of the lemma are maintained at each swap.
\end{proof}
We can now argue that we have indeed identified all the zero relations and prove the converse of Lemma \ref{lem:zeroRelsOneSide}.
\begin{lem}\label{lem:relZeroSolid}Let $R_0, \dots, R_t$ be configurations such that for all $q\in\{0, \dots, t-1\}$, there exists $k_q\in R_q$ such that $R_{q+1} = \sigma^-_{k_q}(R_q)$. Then the morphism $f = u_{k_{p-1}}^{R_{p-1}}\circ \dots \circ u_{k_0}^{R_0}$ is non zero if and only if for all $q$, $k_q\in R_0$. 
\end{lem}
\begin{proof} Assume that $k_q\in R = R_0$ for all $0 \leq q\leq t-1$. By Lemma \ref{lem:ordering} we can assume the elements of the chain are ordered with respect to the ordering $<_{f}$. Let $\tau$ be the maximal index such that $k_{\tau} \leq_{f} n$. First we show that $f_{ext} = u_{k_{\tau}}\circ\dots\circ u_{k_{1}}$ is a non zero extension. Because $k_{min}\not \in R $ or $k_{1} > 1$ (see remark before Lemma \ref{lem:ordering}), the set $\{k_{1}, \dots, k_{\tau}\}$ is allowed as per Definition \ref{def:allowedConfig} and see Subsection \ref{subsec:grids}. So $f_{ext}$ is non zero  in $\mathcal{Y}_{m,n}$ by Proposition \ref{fact:PurExt} and Lemma \ref{lem:decompExt}.
\par We now consider the morphism $f_{0} = u_{k_{t-1}}\circ\dots\circ u_{k_{\tau + 1}}$ which is concentrated in degree zero. To see that it is non zero we argue that its source and its target satisfy the condition of Lemma \ref{lem:hom0}(\ref{item:5}). Note that the source and the target of $f_0$ match in columns $k_{min}$ to $n-1$. Hence we have that $\{\lambda_i |i\in S_{\alpha_{\tau}}\} \subseteq \{l_j|j\in S_{\alpha'}\}$ completing the second part of Lemma \ref{lem:hom0}(\ref{item:5}). 
\par For the first part of Lemma \ref{lem:hom0}(\ref{item:5}), we need to argue that negative elements of the configurations that refer to the same coefficient in the partition interlace. It is easy to argue that the negative elements of the configuration interlace. Showing that interlacing elements correspond to the same coefficients is longer. The details are left to the reader, but can be found in \cite[Chapter 5.2]{gottesman2024these}.

\par The final step is to argue that $f = f_0\circ f_{ext}$ is non zero in $\mathcal{Y}_{m,n}$. We have shown that there exists $J\subseteq S_{\alpha}$ allowed such that there exists a non zero morphism from $\PRes_{q_J(\alpha)}$ to $\PRes_{\alpha'}$. Moreover, if $1 = \lambda_{\epsilon} \in R$ and $\epsilon\in J$, then $0$ is an element of $R'$ because the bead associated to $1$ can only be moved once. The same is true for all the beads associated to zero if it was an element of $R$. Hence if $\epsilon \in J$ and $\lambda_{\epsilon} = 1$, then the partition $\beta$ should have zero for its first value \emph{i.e.} $l_1 = 0$ and $y_1 > x_{\epsilon - 1}$. One can show that this indeed implies that there exists a morphism in the derived category from $\PRes_{\alpha}$ to $\PRes_{\alpha'}$. See \cite[Lemma 4.1.11]{gottesman2024these} for the details of the proof. Moreover, by Theorem \ref{lem:homsBool}, the set $J$ that we have identified is the unique subset of $S_{\alpha}$ such that $q_J(\alpha)\in [f(\alpha'), \alpha']$. By the proof of Proposition \ref{prop:factHomYildCat}, the composition $f = f_0\circ f_{ext}$ is non zero. This concludes the proof.
\end{proof}

\begin{rem}
By Proposition \ref{prop:factHomYildCat} and Lemmas \ref{lem:decompExt} and \ref{lem:decomHom0} all morphisms decompose into an extension post composed by a degree zero morphism. In turn these can be further decomposed into the irreducible morphisms we have identified. The list of the combinatorial data of the morphisms we thus obtain is called the \defn{canonical list} of the morphism. Its ordering coincides with $<_f$. Its positive elements are the elements of the unique subset $J$ of $S_{\alpha}$ such that $q_J(\alpha)\in [f(\beta), \beta]$ gives positive elements of the canonical list. Then, the inequalities between the ending indices described in Lemma \ref{lem:hom0}(\ref{item:5}) gives the remainder of the canonical list by looking at the transformation associated to the integers in the intervals $]-x_i, -y_j]$.
\end{rem}
As a consequence we have the following proposition
\begin{prop}\label{prop:RelYildCat} The relations described in equation (\ref{eq:rel}) generate the relations between morphisms in the category  $\mathcal{Y}_{m,n}$. Hence $\mathcal{Y}_{m,n}$ is generated by quadratic relations
\end{prop}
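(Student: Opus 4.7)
The strategy is to show that the formal category $\tilde{\mathcal{Y}}_{m,n}$, with the same objects as $\mathcal{Y}_{m,n}$, generated by the elementary morphisms $u_k^R$ modulo the relations $\rho_{k,l}^R$ and $z_k^R$, maps isomorphically onto $\mathcal{Y}_{m,n}$. The canonical functor $\Phi : \tilde{\mathcal{Y}}_{m,n} \to \mathcal{Y}_{m,n}$ exists because the relations $\rho_{k,l}^R$ and $z_k^R$ do hold in $\mathcal{Y}_{m,n}$ by Lemma \ref{lem:RelIdentification}. Surjectivity on morphism spaces follows from the decomposition results already proved: Proposition \ref{prop:factHomYildCat} factors any non-zero morphism as a pure extension followed by a degree zero morphism, and Lemmas \ref{lem:decompExt} and \ref{lem:decomHom0} further decompose these two factors into $\prescript{1}{}{u_i}$'s and $\prescript{0}{}{u_i}$'s, which correspond to elementary morphisms $u_k^R$ via Proposition \ref{prop:configArrows}.

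The heart of the proof is faithfulness. Given a formal composition $f = u_{k_t}^{R^{t-1}} \circ \dots \circ u_{k_1}^{R^0}$ in $\tilde{\mathcal{Y}}_{m,n}$, I would show that one can reduce $f$ modulo $\rho$ and $z$ to a canonical representative depending only on the source $R^0$, the target $R^t$ and a scalar. Two cases arise, mirroring the preceding lemmas. First, if some $k_q$ does not lie in $R^0$, pick the minimal such $k_q$ and the latest earlier index $r < q$ with $k_r = k_q + 1$, exactly as in the proof of the lemma characterising when $f$ is non-zero. All intermediate $k_s$ with $r < s < q$ satisfy either $k_s > k_q + 1$ or $k_s < k_q$, so they do not form a $z$-pattern with $k_q$ or $k_r$; by repeated application of the $\rho_{k,l}$ relations (whose hypotheses are met precisely because the intermediate transformations are well defined) one brings $u_{k_q} \circ u_{k_q+1}$ into adjacent position and applies $z_{k_q+1}$ to conclude $f = 0$ in $\tilde{\mathcal{Y}}_{m,n}$. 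Second, if every $k_q$ lies in $R^0$, Lemma \ref{lem:ordering} provides a reordering via $\rho$ of the indices with respect to $\prec_f$; the resulting ordered composition is the canonical list associated to $f$, which depends only on $(R^0, R^t)$.

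Combining the two cases, for a fixed pair $(R^0, R^t)$ the morphism space in $\tilde{\mathcal{Y}}_{m,n}$ is spanned by at most one canonical element (up to sign). On the other hand, by Theorem \ref{lem:homsBool} the morphism space in $\mathcal{Y}_{m,n}$ between the corresponding objects is at most one dimensional. Since $\Phi$ sends the canonical representative to a non-zero morphism whenever the target hom space is non-zero (this is precisely the content of the preceding lemma characterising non-vanishing of compositions), $\Phi$ is a bijection on each hom space. This yields faithfulness, and hence the relations $\rho$ and $z$ generate all relations between the elementary morphisms.

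The main obstacle is the sign bookkeeping highlighted in Remark \ref{rem:signPbms}: the sign $\varepsilon$ in $\rho_{k,l}^R$ varies with whether $k$ and $l$ are positive. When applying the bubble-sort style reordering argument of Lemma \ref{lem:ordering}, one must verify that the accumulated sign agrees with the sign of the canonical list obtained directly from the projective resolution via Lemma \ref{lem:explicitLift}. This amounts to checking that the sign produced by commuting two degree-one generators past each other matches the sign appearing in the lift of $u_J^\alpha$ when two indices $i<j$ of $J$ are swapped in the underlying simplicial combinatorics; the formula $(-1)^{|I \setminus J|_J + |J| \cdot k}$ from Lemma \ref{lem:explicitLift} is precisely the $\prec_f$-ordered expression, so the two sign conventions are compatible and no contradiction arises.
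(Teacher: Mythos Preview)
Your proposal is correct and follows essentially the same approach as the paper: define a formal category generated by the $u_k^R$ modulo $\rho$ and $z$, observe the functor to $\mathcal{Y}_{m,n}$ is surjective on hom spaces via the decomposition results, and prove faithfulness by reducing every path to a canonical ordered representative using the $\rho$-swaps (Lemma \ref{lem:ordering}) and the $z$-relations, then invoking the one-dimensionality of hom spaces in $\mathcal{Y}_{m,n}$. Your explicit discussion of the sign bookkeeping is a welcome addition that the paper handles more implicitly.
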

\begin{proof}
Consider the $\field$-linear category $C$ defined as follows:
\begin{itemize}
\item the objects of $C$ are pairs $(R, l)$ where $R$ is a configuration and $l$ is an integer;
\item the morphisms are generated by arrows $(R, l)\to(\sigma_k^-(R), l')$ with $l' = l + 1$ if $k>0$ and $l' = l$ otherwise,
\item with relations $\rho_{k,l}$ and $z_k$ identified in Lemma \ref{lem:RelIdentification}.
\end{itemize}
By Lemma \ref{lem:RelIdentification} again there is a well defined essentially surjective functor $F: C\to \mathcal{Y}_{m,n}$. To prove the current proposition we need to argue that it is an equivalence of categories. By Proposition \ref{prop:factHomYildCat}, Lemma \ref{lem:decomHom0} and Lemma \ref{lem:decompExt} combined with Proposition \ref{prop:configArrows} the functor induces surjective maps between the hom spaces $\Hom_{C}(R, R')$ and $\Hom_{D^b(J_{m,n})}(\PRes_R, \PRes_{R'})$. It remains to see that this map is injective. Consider an element 
\begin{equation}\label{eq:genrel}
\sum_{i=1}^q a_i\cdot f_i \text{ of } \Hom_{C}(R, R')
\end{equation}
where $q$ is an integer, $a_1, \dots, a_q$ are elements of $\field$ and the morphism $f_i$ is a non zero composition of $u_{k}^{R}$ morphisms. In other words, there exists a sequence $k^{i}_1, \dots k^{i}_{p_{i}}$ such that $f_i = u_{k^{i}_{p_{i}}}\circ\dots\circ u_{k^{i}_{1}}$. Suppose $F(\sum_{i=1}^q a_i\cdot f_i) = 0$ in $\mathcal{Y}_{m,n}$. We can also assume $f_1\not = 0$ and $\alpha_1 \not = 0$. Lemma \ref{lem:relZeroSolid} ensures that zero relations, \emph{i.e.} when $q=1$ correspond exactly to those described in Lemma \ref{lem:RelIdentification}. To conclude when $q>1$ we want to argue that $k^{i}_1, \dots k^{i}_{p_{i}}$ contain the same elements, independently of $i$.
\par By Lemmas \ref{prop:factHomYildCat}, \ref{lem:decompExt} and \ref{lem:decomHom0} there exists a list $K$ of transformations to go from $\alpha$ to $\beta$. We argue that it is the only possible list. Suppose $L$ is a list of transformations giving a non zero morphism from $\alpha$ to $\beta$. Order both $K$ and $L$ using the order relation $<_{min}$ with starting point $k_{min}$ the minimum of the starting points for $K$ and $L$. If there exists $s$ an element of $L$ which is not in $K$ pick $s$ minimal for $<_{min}$. Then using the transformations listed by $L$, $s-1 \in \beta$ but through $C$, $s-1\not\in \beta$ which is a contradiction. Hence all the elements of $L$ appear in $K$. Symmetrically, all the elements of $K$ appear in $L$.
\par Denote $f^{\star}$ the composition of the maps associated to the (canonical) list in increasing order of their combinatorial datum. Note that Lemma \ref{lem:ordering} and its proof apply to the category $C$ as well since it only uses equation (\ref{eq:rel}). Hence every morphism in $C$ from $(R, l)$ to $(R', l')$ is proportional to $f^{\star}$. In particular, there exist elements $a, b\in \field$ such that $f_1 - a\cdot f^{\star}=0$ in $\mathcal{C}$ and $-\frac{1}{a_1}\sum_{i=2}a_i\cdot f_i - b\cdot f^{\star}$ in $\mathcal{C}$. Because $F(\sum_{i=1}^q a_i\cdot f_i) = 0$ in $\mathcal{Y}_{m,n}$, $a= -b$ and $\sum_{i=1}^q a_i\cdot f_i = 0$ in $C$.
\end{proof}
\begin{nota}\label{nota:signs}
For a configuration $R$ as well as an integer $0 \leq l \in R$ and $0 > k\in R$, define \[\kappa(R, l) = \sum_{\substack{x \in R\\ l \geq x\geq 0}} x \text{ and } \nu(R, k) = \sum_{k \geq x\in R} x.\] We establish a number of identities concerning $\kappa$ and $\nu$ combined with a transformation $\sigma_l^-$. Let $l_1$ and $l_2$ be allowed in $R_+$. Without loss of generality we can assume that $l_1 < l_2$. Then $\kappa(\sigma^-_{l_1}(R), l_2) = \kappa(R, l_2) - 1$ while $\kappa(\sigma^-_{l_2}(R), l_1) = \kappa(R, l_1)$. Hence $\kappa$ is a \emph{combinatorial statistic that detects the event $l_1\leq l_2$}. Similarly, if $k_1$ and  $k_2$ are allowed in $R_-$ and $k_1 < k_2$ then $\nu(\sigma^-_{k_1}(R), k_2) = \nu(R, k_2) - 1$ while $\nu(\sigma^-_{k_2}(R), k_1) = \nu(R, k_1)$. Hence $\nu$ detects the event $k_1 < k_2$. Combining the two statistics we get $\kappa(\sigma^-_k(R), l) = \kappa(R, l)$ and $\nu(\sigma^-_l(R), k) = \nu(R, k)$.
Alternatively we can define these quantities on partitions. We will only use the quantity $\kappa$ which we express as follows: let $\alpha$ be a partition, $i$ be allowed in $S_{\alpha}$ and set
\begin{equation}\label{eq:shift}
\kappa(\alpha, i) = \sum_{k = 1}^i \lambda_k.
\end{equation}
Because it will be used several times, we write $\kappa_{\alpha} = \kappa_R = \kappa(\alpha, r)$.
\end{nota}

To conclude this subsection, we give three presentations of the category $\mathcal{Y}_{m,n}$ by generators and relations. The first one is a direct corollary of Proposition \ref{prop:RelYildCat} with the relations of Lemma \ref{lem:RelIdentification}. According to equation (\ref{eq:rel}), with that presentation some squares of irreducible morphisms commute and some anticommute. Using Notation \ref{nota:signs} we give two more presentations, one where all the squares commute and one where they all anticommute. 
\begin{cor}\label{cor:Pres}
The morphisms in $\mathcal{Y}_{m,n}$ are generated by
\begin{enumerate}
	\item\label{rel1}the maps $u_k^R$ for all $R\in C_{m,n}$ and for all $k\in R$ allowed, with relations generated by $\rho_{k,l}^R = u_l^{\sigma_k^-(R)}\circ u_{k}^R - \varepsilon u^{\sigma^-_l(R)}_k \circ u_l^R $ where $k, l\in R$ such that $k - 1, l - 1 \not\in R$ and $\varepsilon = -1$ if $k$ and $l$ are positive and $1$ otherwise, along with $z_{k}^R = u_{k}^{\sigma^-_{k-1}(R)}\circ u_{k}^R$ where $k\in R$, but $k-1, k-2 \not\in R$;
	\item\label{rel2}the maps $v_{k}^R = (-1)^{\kappa(R, k)}\cdot u_k^R$ for all $R\in C_{m,n}$ and for all $k\in R$ allowed, with relations generated by $(\rho'_{k,l})^{R} = v_l^{\sigma_k^-(R)}\circ v_{k}^R - v^{\sigma^-_l(R)}_k \circ v_l^R $ where $k, l\in R$ such that $k - 1, l - 1 \not\in R$ along with $(z'_{k})^R = v_{k-1}^{\sigma^-_k(R)}\circ v_{k}^R$ where $k\in R$, but $k-1, k-2 \not\in R$;
	\item\label{rel3}the morphisms $w_{k}^R = \varepsilon(R,k)(-1)$ for all $R\in C_{m,n}$ with $\varepsilon(R, k) = (-1)^{\nu(R, k) +\kappa_R}$ if $k < 0$ and $\varepsilon(R, k) = 1$ otherwise, for all $k\in R$ allowed, with relations generated by $(\rho_{k,l}'')^R = w_l^{\sigma_k^-(R)}\circ w_{k}^R + w^{\sigma^-_l(R)}_k \circ w_l^R $ where $k, l\in R$ such that $k - 1, l - 1 \not\in R$ along with $(z_k'')^R = w_{k-1}^{\sigma^-_k(R)}\circ  w_{k}^R$ where $k\in R$, but $k-1, k-2 \not\in R$.
\end{enumerate}
\end{cor}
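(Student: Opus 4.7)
The first presentation is an immediate restatement of Proposition \ref{prop:RelYildCat}, once the morphisms are translated from partitions to configurations via Proposition \ref{prop:configArrows}. The only point to observe is that the exceptional case of Note \ref{note:extNonElem}, where $\prescript{1}{}{u_2}$ is not elementary, is invisible on the configuration side because that transformation does not correspond to a legal $\sigma^-_k$ move on the abacus, so no generator needs to be added or removed.

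For presentation (2), the plan is to verify that $v_k^R := (-1)^{\kappa(R,k)}\,u_k^R$ is a change of generators turning $\rho_{k,l}^R$ into $(\rho'_{k,l})^R$. Since $\kappa(R,k)=0$ for $k<0$, the rescaling is trivial on negative indices, so the only nontrivial squares are those involving at least one positive arrow. The core computation uses the identities recorded in Notation \ref{nota:signs}: for $0<l_1<l_2$ both in $R_+$,
\[
\kappa(\sigma^-_{l_1}(R),l_2)=\kappa(R,l_2)-1,\qquad \kappa(\sigma^-_{l_2}(R),l_1)=\kappa(R,l_1),
\]
and the extra $-1$ coming from these exponents exactly cancels the $\varepsilon=-1$ of $\rho^R_{l_1,l_2}$ in presentation (1), giving a commuting square. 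When one index is positive and the other non-positive, $\kappa$ picks up no sign correction in the negative direction while it picks up none either in the positive direction (since $\kappa(\sigma^-_k(R),l)=\kappa(R,l)$ for $k<0$), so the already-commuting square of presentation (1) stays commuting.

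For presentation (3), the plan is symmetric: set $w_k^R:=\varepsilon(R,k)\,u_k^R$ with $\varepsilon$ as prescribed. The rescaling is trivial on positive indices, and now the nontrivial case is $k<l<0$, where Notation \ref{nota:signs} gives
\[
\nu(\sigma^-_k(R),l)=\nu(R,l)-1,\qquad \nu(\sigma^-_l(R),k)=\nu(R,k),
\]
while $\kappa_R$ is left unchanged by both $\sigma^-_k$ and $\sigma^-_l$. The resulting asymmetric $-1$ turns the commuting square of presentation (1) into an anti-commuting one. For mixed indices $k<0<l$, one uses $\kappa_{\sigma^-_l(R)}=\kappa_R-1$ together with $\nu(\sigma^-_l(R),k)=\nu(R,k)$ and $\nu(\sigma^-_k(R),l)$ being undefined on negative indices (so $w_l=u_l$) to produce again the required anti-commutation. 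Both the positive–positive and negative–negative cases boil down to the same identity; this uniformity is exactly the point of presentation (3).

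In all three presentations the zero relation is the same: $u_{k-1}^{\sigma^-_k(R)}\circ u_k^R=0$ forces $v_{k-1}^{\sigma^-_k(R)}\circ v_k^R=0$ and $w_{k-1}^{\sigma^-_k(R)}\circ w_k^R=0$ regardless of the prefactors. The main (and essentially only) obstacle is keeping track of the three subcases of $\rho_{k,l}^R$ simultaneously with the behaviour of $\kappa$, $\nu$ and $\kappa_R$ under $\sigma^-$; this is routine but demands care. No new structural argument is needed beyond Proposition \ref{prop:RelYildCat}.
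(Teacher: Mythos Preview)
Your proposal is correct and follows essentially the same approach as the paper: presentation (1) is read off from Proposition \ref{prop:RelYildCat}, and presentations (2) and (3) are obtained by rescaling the generators by the sign statistics $\kappa$ and $\nu$ of Notation \ref{nota:signs}, checking case by case (both positive, mixed, both non-positive) that the new relations are scalar multiples of the old ones. The paper carries out exactly these computations, with the same case split and the same identities for $\kappa$, $\nu$ and $\kappa_R$ under $\sigma^-$ that you invoke.
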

\begin{proof}
The presentation with relations as per item \ref{rel1} follows directly from the previous discussion. The morphisms $\{v_k^{R}\}_{k,r}$ and $\{w_k^{R}\}_{k,r}$ still form generating sets for the morphisms differing from their $u_{k}^R$ counterpart by a unit. Moreover, the zero relations $z^R_k, (z'_k)^R$ and $(z_k'')^R$ are also proportional for each $R$ and $k$. For a computation completing the proof we refer to \cite[Corollary 5.2.14]{gottesman2024these}. However this also follows from a more general yet less explicit argument from \cite[Proof of Prop. 2.2.5]{dyckerhoff_jasso_lekili_2021}.
\end{proof}
\subsection{Tilting object}
Using Notation \ref{nota:signs} we can finally set
\begin{equation}\label{eq:tiltings}
T: = \bigoplus_{\alpha\in J_{m,n}} \PRes_{\alpha}[\kappa_{\alpha}].
\end{equation}
Note that the morphisms and extensions between couples of indecomposable summands of $T$ only appear in one degree. By shifting each summand of $T$ by $\kappa_{\alpha}$ we concentrate all the morphisms in degree zero. 
	\begin{lem}\label{lem:shiftTilting}The object $T$ has no self extensions.
	\end{lem}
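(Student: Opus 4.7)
The plan is to show that for any two plain partitions $\alpha,\beta\in J_{m,n}$, the unique degree $p$ in which $\Hom_{D^b}(\PRes_\alpha,\PRes_\beta[p])$ can be non-zero is determined by $p=\kappa_\alpha-\kappa_\beta$. Once this is established, the shifts $[\kappa_\alpha]$ in the definition of $T$ cancel these degrees, so every non-zero morphism between summands of $T$ lives in degree $i=0$, and hence $\Hom_{D^b}(T,T[i])=0$ for $i\neq 0$.

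By Theorem \ref{lem:homsBool}, applied to the boolean antichains $C_\alpha$ (Proposition \ref{prop:YildBool}), each such hom space is at most one-dimensional and is non-zero in at most one degree. By Proposition \ref{prop:factHomYildCat}, when it is non-zero this degree equals $|J|$, where $J\subseteq S_\alpha$ is the unique allowed subset with $q_J(\alpha)\in[f(\beta),\beta]$, and the morphism factors canonically as
\begin{equation*}
\PRes_\alpha\longrightarrow\PRes_{q_J(\alpha)}[|J|]\longrightarrow\PRes_\beta[|J|].
\end{equation*}
So it suffices to check that $|J|=\kappa_\alpha-\kappa_\beta$.

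The strategy is to track $\kappa$ along the factorization. For the first arrow, Lemma \ref{lem:allowedFacts}~(\ref{item:all1}) ensures $q_J(\alpha)$ has the same multiplicities as $\alpha$, and each $j\in J$ replaces the distinct value $\lambda_j$ by $\lambda_j-1$. Since $\kappa$ is the sum of the distinct non-zero values (equivalently, of the positive entries of the right configuration), we obtain $\kappa_{q_J(\alpha)}=\kappa_\alpha-|J|$. For the second (degree-zero) arrow, Lemma \ref{lem:decomHom0} decomposes it into a chain of elementary transformations of type $p_i$ with $i\in\{1,\dots,r\}\cup\{\star\}$. Because $\alpha$ and $\beta$ are plain, $\mu_{r+1}=0$, so $p_\star$ is not applicable. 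Each remaining $p_i$ merely redistributes multiplicities between consecutive distinct values, and the exceptional case $p_1$ (when $\lambda_1=0$) removes the zero value whose contribution to $\kappa$ is nil; in all cases the positive part of the configuration, and hence $\kappa$, is unchanged. Therefore $\kappa_{q_J(\alpha)}=\kappa_\beta$. Combining the two equalities yields $|J|=\kappa_\alpha-\kappa_\beta$, as required.

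The principal difficulty is controlling the degree-zero step, which reduces to the observation that the potentially disruptive ``wrap-around'' move $p_\star$ (corresponding in configurations to shifting a bead from $-m$ to $n$, which would alter $\kappa$ by $n$) never appears between plain partitions. With that confirmed, the identity $|J|=\kappa_\alpha-\kappa_\beta$ forces every non-trivial contribution to $\Hom_{D^b}(T,T[i])$ to concentrate at $i=0$, concluding the proof.
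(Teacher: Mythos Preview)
Your argument follows the same skeleton as the paper's: factor an arbitrary morphism via Proposition~\ref{prop:factHomYildCat}, track $\kappa$ along the extension leg and along the degree-zero leg, and conclude that the shifts line up. The only cosmetic difference is that you justify $\kappa_{q_J(\alpha)}=\kappa_\beta$ by decomposing the degree-zero map into elementary $p_i$'s via Lemma~\ref{lem:decomHom0} and observing that $p_\star$ cannot occur between plain partitions, whereas the paper reads off the equality of non-zero value sets directly from Lemma~\ref{lem:hom0}(\ref{item:4}) (together with the reverse inclusion implicit in item~(\ref{item:5}) for plain sources).

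One bookkeeping warning: your identity $\kappa_{q_J(\alpha)}=\kappa_\alpha-|J|$ is the correct one, but combined with the definition $T=\bigoplus\PRes_\alpha[\kappa_\alpha]$ it does not make the shifts ``cancel'' as you assert---a direct check (e.g.\ $\alpha=(2,2)$, $\beta=(1,1)$ in $J_{2,2}$) shows the resulting degree is $2|J|$, not $0$. The paper's proof reaches $p=0$ only because it simultaneously writes $\kappa_{q_J(\alpha)}=\kappa_\alpha+|J|$, which is a compensating sign slip. The clean fix is to take $T=\bigoplus\PRes_\alpha[-\kappa_\alpha]$; with that convention your computation $|J|=\kappa_\alpha-\kappa_\beta$ immediately gives $\Hom(\PRes_\alpha[-\kappa_\alpha],\PRes_\beta[-\kappa_\beta][i])\neq 0$ only for $i=0$, and the rest of your sketch is correct as written.
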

	\begin{proof}
		First note that the quantity $\kappa_{\alpha}$ only depends on the non zero values of the partition $\alpha$ \emph{i.e.} those indexed by $S_{\alpha}$. Hence if there is a morphism in degree zero $\PRes_{\alpha}\to \PRes_{\beta}$ then $\kappa_{\alpha} = \kappa_{\beta}$. This follows from Lemma \ref{lem:hom0}(\ref{item:4}) and the fact that we only look at plain partitions. 
Moreover, if $J$ is allowed, $|J| = p$ then $\kappa_{q_J(\alpha)} = \kappa_{\alpha} + p$. We put these two remarks together. If there exists a non zero morphism 
			\begin{equation}
			\PRes_{\alpha}[\kappa_{\alpha}] \to \PRes_{\beta}[\kappa_{\beta}][p]
			\end{equation}
		by Proposition \ref{prop:factHomYildCat}, there exists a unique subset $J$ of $S_{\alpha}$ $q_J(\alpha) \in [f(\beta), \beta]$ and $|J| = -\kappa_{\alpha} + \kappa_{\beta} + p$. The subset $J$ is allowed for $\alpha$, so $\kappa_{\alpha} = \kappa_{q_J(\alpha)} - |J|$. Moreover there is a non zero morphism of modules from $\PRes_{q_{J}(\alpha)}$ to $\PRes_{\beta}$ meaning we have $\kappa_{\beta} = \kappa_{q_J(\alpha)}$. Thus, $|J| = \kappa_{\beta} - \kappa_{\alpha}$, $p=0$ and $T$ has no self extensions.
\end{proof}

Proposition \ref{prop:thicc} together with Lemma \ref{lem:shiftTilting} show that the object $T$ is tilting. We now describe its algebra of endomorphisms and to do so we recall the construction of higher Auslander algebras of type $A$ following convention from \cite[Definition~2.12]{herschend2021auslander}. Note that we compose arrows using a different convention but everything else is written as close to that source as possible. Let $d$ and $s$ be integers. The \defn{higher Auslander algebra of type $A_s^d$} is constructed as a quotient of a quiver algebra by relations. The underlying set of the quiver is the set of increasing sequences of length $d+1$ with values in $\{1, \dots, d+s\}$. Let $x = (x_0,\dots, x_d)$ be an element of $Q_0$. For a value $k$ appearing in a sequence $x$, we define a partial transformation $\sigma_k^+$ on $Q_0$ by
\[\sigma_k^+(x) = (x_0<\dots x_{i} < k + 1 < x_{i+2} < \dots x_d)\]
whenever the resulting sequence is increasing \emph{i.e.} $x_{i+2} \not = k + 1$. Similarly we write $\sigma^-_k$ for the partial  map that replaces $k$ by $k-1$ in the sequence whenever possible. Let $Q_1$ the set of arrows of the quiver consist of elements $\alpha_k^x$ with source $x$ and target $\sigma_k^+(x)$ whenever the target is well defined. In the path algebra of the resulting quiver $\IncAlg(Q)$ we consider the ideal generated by the following elements
\begin{equation*}
\rho^x_{k,l} =  \begin{cases}
\alpha_k^{\sigma_l^+(x)}\alpha_l^x - \alpha_l^{\sigma_k^+(x)}\alpha_k^x & \text{ if } k, l\in x \text{ and } k + 1, l+1 \not \in x,\\
\alpha_k^{\sigma_{k+1}^+(x)}\alpha_{k+1}^x&\text{ if } l= k+1 \in x \text{ and } l + 1\not\in x.\\
\end{cases}
\end{equation*}
We denote by $G$ the vector space generated by these elements. Then we set $(\IncAlg(Q)/ I)^{op}$ to be the higher Auslander algebra $A_s^d$. With these relations and the usual grading by length of paths it is clear that $A_s^d$ is quadratic. Its quadratic dual is 
\[A^{!} = (\IncAlg(Q)^{op}/\langle G^{\bot}\rangle^{op}\]
where $\IncAlg(Q)^{op}= \IncAlg(Q^{op})$ and $G^{\bot}$ is the orthogonal complement of $G$ in the dual of $\IncAlg(Q)_{2}$, the vector space with basis the paths of length two in the quiver $Q$ \cite{martinezvilla2004dual}. It remains to compute the orthogonal of $G$ in $\IncAlg(Q)^{op}$ to get a presentation of the quadratic dual as a quiver with relations. 

\begin{prop}\label{prop:OrthRels} The orthogonal component $G^{\bot}$ of $G$, has basis
	\begin{equation*}
		\begin{cases}
		(\alpha_{k}^x)^{op}(\alpha_{k+1}^{\sigma_k^+(x)})^{op}\\
		(\alpha_k^x)^{op}(\alpha_l^{\sigma_k^+(x)})^{op} + (\alpha_l^x)^{op}(\alpha_k^{\sigma_l^+(x)})^{op}\\
		\end{cases}
	\end{equation*}
where $k, l\in x$ while $k+1, l+1\not\in x$. 
\end{prop}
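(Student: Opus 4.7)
The plan is to exploit the bigrading on degree two paths by source and target vertex. Since each defining relation $\rho^x_{k,l}$ sits in a single component $Q_2(x,y)$ (the span of length two paths from $x$ to $y$) and the canonical pairing between $\IncAlg(Q)_2$ and $\IncAlg(Q^{op})_2$ respects this bigrading, one has $I^\perp=\bigoplus_{x,y}I(x,y)^\perp$ where each summand is computed inside a finite dimensional local piece. It therefore suffices, for each pair $(x,y)$ joined by at least one length two path, to identify $I(x,y)$ inside $Q_2(x,y)$ and read off its orthogonal.

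The analysis splits according to $\dim Q_2(x,y)\in\{1,2\}$, and in the one dimensional case further according to whether the unique path lies in $I$ or not. \textbf{(A)} If $\dim Q_2(x,y)=2$, the two paths are $\alpha_k^{\sigma_l^+(x)}\alpha_l^x$ and $\alpha_l^{\sigma_k^+(x)}\alpha_k^x$ for some distinct $k,l\in x$ with $k+1,l+1\notin x$, which automatically forces $|k-l|\geq 2$ and $y=\sigma_k^+\sigma_l^+(x)=\sigma_l^+\sigma_k^+(x)$. Then $I(x,y)$ is the line spanned by the commutator $\rho^x_{k,l}$, and $I(x,y)^\perp$ is spanned by the corresponding \emph{sum}, which is precisely the second family of generators in the statement. \textbf{(B)} If $\dim Q_2(x,y)=1$ and the unique path is $\alpha_k^{\sigma_{k+1}^+(x)}\alpha_{k+1}^x$ (equivalently $k,k+1\in x$ and $k+2\notin x$), this path equals the zero relation $\rho^x_{k,k+1}$, so $I(x,y)=Q_2(x,y)$ and this pair contributes nothing to $I^\perp$. \textbf{(C)} If $\dim Q_2(x,y)=1$ and the unique path is $\alpha_{k+1}^{\sigma_k^+(x)}\alpha_k^x$ (equivalently $k\in x$ and $k+1,k+2\notin x$), then inspection of the two shapes of $\rho^{x'}_{k',l'}$ shows that none of them lands in this component; both shapes require $k+1\in x$. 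Hence $I(x,y)=0$ and $I(x,y)^\perp$ is the whole one dimensional space, spanned by $(\alpha_k^x)^{op}(\alpha_{k+1}^{\sigma_k^+(x)})^{op}$, which is precisely the first generator.

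The main obstacle is the combinatorial bookkeeping for the case analysis, that is showing that every pair $(x,y)$ admitting a length two path falls into exactly one of (A), (B), (C), with the claimed dimension of $Q_2(x,y)$. I will handle this by fixing an intermediate $z=\sigma_j^+(x)$ and asking which $i\in z$ with $i+1\notin z$ produce a second arrow: either $i\in x$ with $i+1\notin x$ and $|i-j|\geq 2$ (yielding (A) together with the symmetric path through $\sigma_i^+(x)$), or $i=j+1$ requiring $j+2\notin x$ (yielding (C) with $k=j$), or $i=j-1$ with $j-1\in x$ (yielding (B) with $k=j-1$). Once this trichotomy is pinned down, the orthogonality statements are immediate from the standard basis-dual pairing $\langle\alpha\beta,\beta^{op}\alpha^{op}\rangle=1$, and assembling the per-pair generators produces the two families listed in the proposition.
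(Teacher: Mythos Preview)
Your proof is correct and follows essentially the same approach as the paper. Both arguments rest on the same trichotomy for length-two paths (two distinct increments that commute, two consecutive increments in the ``wrong'' order, and a single element incremented twice), which is exactly the paper's distinction between ``two elements modified, order does not matter / order matters'' and ``one element modified''. The only organizational difference is that you compute $I(x,y)^\perp$ explicitly inside each bigraded component $Q_2(x,y)$, whereas the paper first observes that the listed elements lie in $I^\perp$ and then appeals to the global dimension identity $\dim I+\dim I^\perp=\dim \IncAlg(Q)_2$ together with the same trichotomy to match cardinalities; your componentwise computation is slightly more precise (the paper's phrase ``partition of a basis'' is a little loose since commutators are not basis paths), but the substance is identical. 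One small wording issue: in your case (C) you say ``both shapes require $k+1\in x$'', which is not literally true for the commutator shape---the correct reason the commutator relation cannot land there is that its target differs from $x$ in two positions, whereas in (C) the target differs in only one.
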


\begin{proof}
It is clear that these elements are in $G^{\bot}$. We know that $$\dim G + \dim G^{\bot} = \dim \IncAlg(Q)_2.$$ We argue that the set above is precisely $G^{\bot}$ for reasons of cardinality. To do so, notice that the composition of two arrows in the quiver modifies either one or two elements of the sequence. The case where it modifies one element corresponds to the first relations in the equation above. When two elements are modified there are two cases. Either the order matters or it does not. If it does then we are in the case of the zero relations of $A_s^d$. If it doesn't, then the 2-path appears in the commutation relation of $A_s^d$ but also in the anticommutation relation we exhibited for $(A_s^d)^{!}$ just now. Hence there is a partition of a basis of $\IncAlg(Q)_2$ into the paths in $G$ and $G^{\bot}$.
\end{proof}
Note that in the quadratic dual, squares commute with a sign \emph{i.e.} $ab + cd = 0$. What remains to prove for Theorem \ref{thm:DerEqTypeA} is that the signs of the squares can be modulated meaning that we can construct isomorphisms between the quiver algebra modulo relations with $ab + cd = 0$ and then one with $ab - cd = 0$ for some square relations in the ideal $I$. This is not true for general quotients of paths algebras but it holds for certain configurations of squares in $J_{m,n}$  and $A_s^d$. Note that $A_s^d$ and $(A_s^d)^{op}$ are isomorphic.

\begin{proof}[Proof of Theorem  \ref{thm:DerEqTypeA}]
Consider the presentation of $\mathcal{Y}_{m,n}$ given in Corollary \ref{cor:Pres}(\ref{rel2}). We restrict it to the indecomposable components of the object $T$. Like in \cite{dyckerhoff_jasso_lekili_2021}, we can then replace configurations by their complements  in $\mathcal{Z}$. Next we shift them by $+m$. This way we recognise the presentation of $A_{m+1}^{n-1}$. See Figure \ref{fig:complement} to see how taking the complement of the configuration gives a relation that matches one in $A_{m+1}^{n-1}$. The boxes in Figure \ref{fig:complement} indicated empty spots in the abacus. When taking complements, the boxes become the elements of the resulting configuration and the dots become the empty spots of the corresponding abacus.
\end{proof}
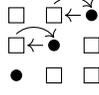
\begin{figure}
\centering
\begin{tikzpicture}
\filldraw (.5, 0.6)node[](circle1){} circle (2pt);
\draw (-0.1, 0.5) rectangle node[](rect1){} (0.1,0.7);
\draw (-0.6, 0.5) rectangle (-0.4,0.7);
\draw[->](circle1) -- (rect1); 
\path[->](0,0.75) edge [bend left = 50] (.5,.7);
\draw (0.4, 0.1) rectangle (0.6, 0.3);
\filldraw (0, 0.2) node[](circle2){} circle (2pt);
\draw (-0.6, 0.1) rectanglenode[](rect2){} (-0.4,0.3);
\draw[->](circle2) -- (rect2); 
\path[->](-0.5,0.35) edge [bend left = 50] (0,.3);
\draw (0.4, -0.3) rectangle (0.6, -0.1);
\draw (-0.1, -0.3) rectangle (0.1,-0.1);
\filldraw (-0.5, -0.2) circle (2pt);
\end{tikzpicture}
\caption{illustrating the duality of the relations}\label{fig:complement}
 \end{figure}

 \begin{prop}The algebra $J_{m, n}$ is derived equivalent to $(A_{n+1}^{m-1})^{!}$.
\end{prop}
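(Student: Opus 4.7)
The plan is to argue exactly as in the proof of Theorem \ref{thm:DerEqTypeA}, but using the third presentation of $\mathcal{Y}_{m,n}$ from Corollary \ref{cor:Pres} instead of the second. The tilting complex $T = \bigoplus_{\alpha\in J_{m,n}}\PRes_\alpha[\kappa_\alpha]$ of (\ref{eq:tiltings}) stays unchanged: Proposition \ref{prop:thicc} and Lemma \ref{lem:shiftTilting} already give the derived equivalence $D^b(J_{m,n}) \simeq D^b(\End(T)^{\mathrm{op}})$, so it suffices to identify $\End(T)^{\mathrm{op}}$ with $(A_{n+1}^{m-1})^{!}$ as a $\field$-algebra.

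For the combinatorial dictionary, I would restrict the third presentation of $\mathcal{Y}_{m,n}$ to the summands of $T$, i.e.\ to the plain partitions. Under $\phi_r$ these index the $m$-subsets of $\mathcal{Z} = \{-m,\dots,n\}$ that do not contain $-m$; shifting by $+m$ identifies them with the $m$-subsets of $\{1,\dots,m+n\}$, which is precisely the vertex set of the quiver $Q$ defining $A_{n+1}^{m-1}$ (sequences of length $d+1=m$ in $\{1,\dots,d+s\}=\{1,\dots,m+n\}$). An elementary generator $w_k^R : \PRes_R \to \PRes_{\sigma_k^-(R)}$ decrements one entry, so after the shift it matches the arrow $(\alpha_{k+m-1})^{\mathrm{op}}$ of $Q^{\mathrm{op}}$; the boundary values of $k$ that would take us off the plain-partition locus correspond to indices outside the allowed range of $Q$, so the arrow sets match exactly.

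The next step is to match the relations. The anticommuting square relations $(\rho_{k,l}'')^R$ and the consecutive-decrement zero relations $(z_k'')^R$ of Corollary \ref{cor:Pres}(\ref{rel3}) then correspond, under the above dictionary, to the two families of generators of $I^{\perp}$ computed in Proposition \ref{prop:OrthRels}. This yields the isomorphism $\End(T)^{\mathrm{op}} \cong (\field(Q)^{\mathrm{op}}/I^{\perp})^{\mathrm{op}} = (A_{n+1}^{m-1})^{!}$, and hence the desired derived equivalence.

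The main obstacle will be bookkeeping of signs. The signs in Corollary \ref{cor:Pres}(\ref{rel3}) are encoded through the statistics $\kappa$ and $\nu$ of Notation \ref{nota:signs}, and the shifts $[\kappa_\alpha]$ in the definition of $T$ inject further signs when morphisms are composed in $\End(T)^{\mathrm{op}}$. One must check that these two sources of signs conspire to produce exactly the uniformly positive anticommutation appearing in $I^{\perp}$ rather than some shuffled-sign cousin. Once this bookkeeping is settled the two presentations coincide, completing the proof.
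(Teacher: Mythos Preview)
Your proposal is correct and follows essentially the same route as the paper: use the tilting object $T$, restrict Corollary~\ref{cor:Pres}(\ref{rel3}) to the plain partitions, and match the resulting anticommuting presentation with the generators of $I^{\perp}$ from Proposition~\ref{prop:OrthRels} after shifting configurations by $+m$. Your final worry about extra signs coming from the shifts $[\kappa_\alpha]$ is a non-issue --- the shift functor is an autoequivalence, so composition and hence the relations transfer verbatim from $\mathcal{Y}_{m,n}$ to $\End(T)$; the only sign bookkeeping is the one already absorbed into the generators $w_k^R$ in Corollary~\ref{cor:Pres}(\ref{rel3}).
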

\begin{proof}
Consider the generators and relations of the algebra $\End(T)^{op}$ given in Corollary \ref{cor:Pres} item \ref{rel3} and shift the values of the configurations by $+m$. Then we get the generators and relations of the quadratic dual of $A_{n+1}^{m-1}$ described in Proposition \ref{prop:OrthRels} after\end{proof}

We also want to make explicit the following result which follows also from an argument in the proof of \cite[Proposition 2.2.5]{dyckerhoff_jasso_lekili_2021}.
\begin{cor}There is an isomorphism of algebras between $A_s^d$ and $(A_{d+2}^{s-2})^{!}$.
\end{cor}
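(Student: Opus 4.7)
The plan is simply to extract the corollary from the two distinct algebra identifications of $\End(T)^{op}$ already carried out in the excerpt. The proof of Theorem \ref{thm:DerEqTypeA} establishes, via the sign convention of item (\ref{rel2}) of Corollary \ref{cor:Pres}, a presentation of $\End(T)^{op}$ by generators and relations matching exactly those of $A_{m+1}^{n-1}$ (after the cosmetic relabelling of configurations by their complements in $\mathcal{Z}$ and a shift by $+m$). The preceding Proposition independently establishes, via the sign convention of item (\ref{rel3}) of the same corollary, a presentation of the \emph{same} algebra $\End(T)^{op}$ that matches the quadratic dual $(A_{n+1}^{m-1})^{!}$ as computed in Proposition \ref{prop:OrthRels}.

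So the plan is to chain these two isomorphisms:
\begin{equation*}
A_{m+1}^{n-1} \;\cong\; \End(T)^{op} \;\cong\; (A_{n+1}^{m-1})^{!}.
\end{equation*}
Then I perform the reparametrisation $s := m+1$ and $d := n-1$. Under this substitution the left-hand exponents read $(s,d)$ directly, while the right-hand exponents $(n+1, m-1)$ become $(d+2, s-2)$, yielding the claimed isomorphism $A_s^d \cong (A_{d+2}^{s-2})^{!}$.

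The only thing worth double-checking (and the closest thing to an obstacle, although it is minor) is the bookkeeping: one must confirm that the two different sign-corrected generating sets $\{v_k^R\}$ and $\{w_k^R\}$ of Corollary \ref{cor:Pres}, when restricted to the summands of the tilting object $T$ of equation (\ref{eq:tiltings}), actually hit every arrow of the quivers defining $A_{m+1}^{n-1}$ and $(A_{n+1}^{m-1})^{!}$ respectively, and that their square and zero relations align on the nose with the $\rho^{x}_{k,l}$ and the orthogonal relations exhibited in Proposition \ref{prop:OrthRels}. This is exactly the matching already performed in the two proofs above. Finally, one should note that the admissible range $m,n\geq 1$ for the ambient lattice $J_{m,n}$ translates precisely to the admissible range $s\geq 2$, $d\geq 0$ (and symmetrically $d+2\geq 2$, $s-2\geq 0$) for the higher Auslander algebras on both sides, so the stated isomorphism holds in the full generality in which $A_s^d$ is defined.
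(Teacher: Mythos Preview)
Your argument is correct and is exactly the one the paper has in mind: the corollary is stated with no separate proof, merely as ``a direct corollary of the proof'' of the two preceding results, and you have spelled out precisely the chain $A_{m+1}^{n-1}\cong \End(T)^{op}\cong (A_{n+1}^{m-1})^{!}$ together with the substitution $s=m+1$, $d=n-1$. One small overstatement: the range $m,n\geq 1$ gives $s\geq 2$, $d\geq 0$, which is not quite the full range in which $A_s^d$ is defined (the case $s=1$ is excluded, though then $A_{d+2}^{s-2}$ is undefined anyway, so the corollary is vacuous there); the paper does not address this edge case either.
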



%
\printbibliography

{\footnotesize \textsc{Tal Gottesman\\
Universit\'e Paris Cit\'e and Sorbonne Universit\'e, CNRS, IMJ-PRG, F-75013 Paris, France.}}\\
\emph{E-mail address:} \texttt{tal.gottesman@rub.de}
\end{document}